\DeclareMathOperator{\sal}{Sal}
\DeclareMathOperator{\Seq}{Seq}
\DeclareMathOperator{\Aut}{Aut}
\newcommand{\W}{\mathcal{W}}
\renewcommand{\C}{\mathcal{C}}
\newcommand{\Q}{\mathbb{Q}}
\renewcommand{\L}{\mathcal{L}}
\newcommand{\M}{\mathcal{M}}
\newcommand{\E}{\mathcal{E}}
\newcommand{\U}{\mathcal{U}}
\renewcommand{\H}{\mathcal{I}}
\DeclareMathOperator{\supp}{supp}
\DeclareMathOperator{\codim}{codim}
\newcommand{\cell}[1]{\langle #1 \rangle}
\DeclareMathOperator{\eu}{\dashv_{eu}}
\newcommand{\euprime}{\dashv'_{\text{eu}}}
\DeclareRobustCommand{\dashveq}{\text{\reflectbox{$\vDash$}\,}}
\definecolor{darkgreen}{RGB}{0,100,0}
\title{Euclidean matchings and minimality of hyperplane arrangements}
\author{Davide Lofano}
\address{
	\textnormal{Scuola Normale Superiore, Piazza dei Cavalieri 7, 56126 Pisa (Italy)}}
\address{
	\textnormal{Current address (Davide Lofano): Technische Universität Berlin
(Germany)}
}
\email{lofano@math.tu-berlin.de}
\author{Giovanni Paolini}
\address{
	\textnormal{Current address (Giovanni Paolini): California Institute of Technology
(United States)}
}
\email{paolini@caltech.edu}
\thanks{\vskip0.01cm
The final publication in \emph{Discrete Mathematics} is available at \url{https://doi.org/10.1016/j.disc.2020.112232}.}
\begin{document}

\begin{abstract}
  We construct a new class of maximal acyclic matchings on the Salvetti complex of a locally finite hyperplane arrangement.
  Using discrete Morse theory, we then obtain an explicit proof of the minimality of the complement.
  Our construction provides interesting insights also in the well-studied case of finite arrangements, and gives a nice geometric description of the Betti numbers of the complement.
  In particular, we solve a conjecture of Drton and Klivans on the characteristic polynomial of finite reflection arrangements.
  The minimal complex is compatible with restrictions, and this allows us to prove the isomorphism of Brieskorn's Lemma by a simple bijection of the critical cells.
  Finally, in the case of line arrangements, we describe the algebraic Morse complex which computes the homology with coefficients in an abelian local system.
  
\end{abstract}

\maketitle

\section{Introduction}

Let $\A$ be a locally finite arrangement of affine hyperplanes in $\R^n$.
The complement $M(\A) \subseteq \mathbb{C}^n$ of the complexified arrangement $\A_\mathbb{C}$ is a well studied topological space.
As proved by Salvetti \cite{salvetti1987topology,salvetti1994homotopy}, $M(\A)$ has the homotopy type of an $n$-dimensional CW complex. This complex is usually called the \emph{Salvetti complex} of $\A$, and we denote it by $\sal(\A)$.

For a finite arrangement $\A$, in \cite{randell2002morse,dimca2003hypersurface,yoshinaga2007hyperplane} it was proved that the complement $M(\A)$ has the homotopy type of a minimal CW complex, i.e.\ with a number of $k$-cells equal to the $k$-th Betti number.
This minimality result was later made more explicit with discrete Morse theory, in \cite{salvetti2007combinatorial} (for finite affine arrangements), \cite{delucchi2008shelling} (for finite central arrangements and oriented matroids in general), \cite{gaiffi2009morse} (for finite line arrangements), \cite{dantonio2015minimality} (for affine arrangements with a finite number of directions).

In this work we consider a (possibly infinite) affine arrangement $\A$, and construct a minimal CW model for the complement $M(\A)$.
This is obtained applying discrete Morse theory to the Salvetti complex of $\A$.
For a (possibly infinite) CW complex, by ``minimal'' we mean that all the incidence numbers vanish.
As in the well known case of finite arrangements, we obtain a geometrically meaningful bijection between cells in the minimal CW model and chambers of $\A$.

Our starting point is the work of Delucchi on the minimality of oriented matroids \cite{delucchi2008shelling}.
Specifically, we build on the idea of decomposing the Salvetti complex according to some ``good'' total order of the chambers.
For a general affine arrangement, however, the combinatorial order used in \cite{delucchi2008shelling} does not yield a decomposition with the desired properties.
In Section \ref{sec:decomposition} we introduce a class of total orders of the chambers for which we are able to extend the construction of Delucchi, and we call them \emph{valid orders}.
We remark that in \cite[Question 4.18]{delucchi2008shelling} it was explicitly asked for one such extension to affine arrangements.
For a finite affine arrangement, the polar order of Salvetti and Settepanella \cite{salvetti2007combinatorial} is valid (Remark \ref{rmk:polar-order}).
Therefore our work contributes to linking the constructions of \cite{salvetti2007combinatorial} and \cite{delucchi2008shelling} (see also \cite[Remark 3.8]{delucchi2008shelling}).

In Section \ref{sec:matching} we show how to construct an acyclic matching on $\sal(\A)$ for any given valid order.

\begin{customthm}{\ref{thm:matching}}
  Let $\A$ be a locally finite hyperplane arrangement, with a given valid order of the set of chambers.
  Then there exists a proper acyclic matching on $\sal(\A)$ with critical cells in bijection with the chambers.
\end{customthm}

In the same section we also prove the following result that can be regarded as a generalization of \cite[Theorem 3.6]{delucchi2008shelling}.

\begin{customthm}{\ref{teo:fiber}}
  Let $X$ be a $k$-dimensional polytope in $\R^k$, and let $y \in \R^k$ be a point outside $X$ that does not lie in the affine hull of any facet of $X$.
  Then there exists an acyclic matching on the poset of faces of $X$ visible from $y$, such that no face is critical.
\end{customthm}

In Section \ref{sec:euclidean} we construct valid orders for any locally finite arrangement $\A$, considering the Euclidean distance of the chambers from a fixed generic point $x_0 \in \R^n$.
In this way, we obtain a family of matchings on $\sal(\A)$ that we call \emph{Euclidean matchings}.
The idea of constructing a minimal complex that depends on a ``generic point'' appears to be new, as opposed to the more classical approach of using a ``generic flag'' \cite{yoshinaga2007hyperplane, salvetti2007combinatorial, gaiffi2009morse}.
The critical cells are in bijection with the chambers, and can be described explicitly.

\begin{customthm}{\ref{thm:euclidean-matching}}
  Let $\A$ be a locally finite arrangement in $\R^n$.
  For every generic point $x_0 \in \R^n$, there exists a Euclidean matching on $\sal(\A)$ with base point $x_0$.
  Such a matching has exactly one critical cell $\cell{C, F_C}$ for every chamber $C \in \C(\A)$,
  where $F_C$ is the smallest face of $C$ that contains the projection of $x_0$ onto $C$.
\end{customthm}

We prove that the Morse complex of a Euclidean matching is minimal.

\begin{customthm}{\ref{thm:minimality}}
  Let $\A$ be a locally finite hyperplane arrangement in $\R^n$, and let $\M$ be a Euclidean matching on $\sal(\A)$ with base point $x_0$.
  Then the associated Morse complex $\sal(\A)_{\M}$ is minimal (i.e.\ all the incidence numbers vanish).
\end{customthm}

In particular, we obtain a new geometric way to read the Betti numbers and the Poincaré polynomial of $M(\A)$ from the arrangement $\A$.
This solves a conjecture of Drton and Klivans on the coefficients of the characteristic polynomial of a finite reflection arrangement \cite{drton2010geometric}.

\begin{customcor}{\ref{cor:betti-numbers}}
  Let $\A$ be a (locally) finite hyperplane arrangement in $\R^n$, and let $x_0 \in \R^n$ be a generic point.
  The $k$-th Betti number of the complement $M(\A)$ is equal to the number of chambers $C$ such that the projection of $x_0$ onto $C$ lies in the relative interior of a face $F_C$ of codimension $k$.
  Equivalently, the Poincaré polynomial of $\A$ is given by
  \[ \pi(\A,t) = \sum_{C \in \,\C(\A)} t^{\,\codim F_C}. \]
\end{customcor}

In Section \ref{sec:brieskorn} we use Euclidean matchings to obtain a proof of Brieskorn's Lemma (for locally finite complexified arrangements) which makes no use of algebraic geometry.
In addition, we show that for every flat $X$ there exist Euclidean matchings on $\sal(\A)$ for which the Morse complex of the subarrangement $\A_X$ is naturally included into the Morse complex of $\A$.

Finally, in Section \ref{sec:homology} we give an explicit description of the algebraic Morse complex that computes the homology of $M(\A)$ with coefficients in an abelian local system, for any locally finite line arrangement $\A$ in $\R^2$.
We compare our result with the one of Gaiffi and Salvetti \cite{gaiffi2009morse}, where similar formulas are obtained in the case of finite line arrangements (using the polar matchings of Salvetti and Settepanella \cite{salvetti2007combinatorial}).
 \section{Background and notations}

In this section we briefly recall some basic definitions and results about hyperplane arrangements, discrete Morse theory, polytopes, and shellability.

\subsection{Hyperplane arrangements}
\label{sec:hyperplane-arrangements}

See \cite{orlik2013arrangements} for a general reference about hyperplane arrangements.
Our notations mostly agree with those of \cite{salvetti2007combinatorial} and \cite{delucchi2008shelling}.

Let $\A$ be a locally finite arrangement of affine hyperplanes in $\R^n$.
Denote by $M(\A) \subseteq \mathbb{C}^n$ the complement of the complexified arrangement $\A_\mathbb{C}$.

The arrangement $\A$ gives rise to a stratification of $\R^n$ into topological subspaces called \emph{faces} (see \cite[Chapter 5]{bourbaki1968elements}).
It is more convenient for us to work with the closure of these subspaces, so we assume from now on that the faces are closed.
By \emph{relative interior} of a face $F$ we mean the topological interior of $F$ inside the affine span of $F$.
The faces of codimension $0$ are called \emph{chambers}.
Denote the set of faces by $\F = \F(\A)$, and the set of the chambers by $\C = \C(\A)$.
The set $\F$ has a natural partial order: $F \preceq G$ if and only if $F \supseteq G$.
The poset $\F$ is called the \emph{face poset} of $\A$, and it is ranked by codimension.

Given two chambers $C,C' \in \C$, let $s(C,C') \subseteq \A$ be the set of hyperplanes which separate $C$ and $C'$.
Also, denote by $\W_C \subseteq \A$ the set of hyperplanes that intersect $C$ in a face of codimension $1$.
These hyperplanes are called \emph{walls} of $C$.

For every chamber $C$, the set $\C$ can be endowed with a partial order $\leq_C$ defined as follows: $D' \leq_C D$ if and only if $s(C,D') \subseteq s(C,D)$.
In the language of oriented matroids, $(\C, \leq_C)$ is called the \emph{tope poset based at $C$} \cite[Definition 4.2.9]{bjorner1999oriented}.

Let $\L = \L(\A)$ be the poset of intersections of the hyperplanes in $\A$, ordered by reverse inclusion.
An element $X \in \L$ is called a \emph{flat}.
Notice that the entire space $\R^n$ is an element of $\L$ (being the intersection of zero hyperplanes), and it is in fact the unique minimal element of $\L$.
The poset $\L$ is a geometric semilattice called the \emph{poset of flats}, and it is also ranked by codimension.
Denote by $\L_k(\A) \subseteq \L(\A)$ the set of flats of codimension $k$.

For a subset $U \subseteq \R^n$ (usually a face or a flat), let $\supp(U) \subseteq \A$ be the subarrangement of $\A$ consisting of the hyperplanes that contain $U$.
This is called the \emph{support} of $U$.
Also, denote by $|U| \subseteq \R^n$ the affine span of $U$.
Notice that, for a face $F \in \F$, we have $|F| \in \L$.

Given a flat $X \in \L$, we also use the notation $\A_X$ to indicate the support of $X$ (this operation is called \emph{restriction}).
Denote by $\A^X$ the arrangement in $X$ given by $\{ H \cap X \mid H \not\in \A_X \}$ (this operation is called \emph{contraction}).
Let $\pi_X\colon \C(\A) \to \C(\A_X)$ be the natural projection, which maps a chamber $C \in \C(\A)$ to the unique chamber of $\A_X$ that contains $C$.

For a chamber $C \in \C$ and a face $F \in \F$, denote by $C.F$ the unique chamber $C' \preceq F$ such that $\pi_{|F|}(C) = \pi_{|F|}(C')$.
In other words, this is the unique chamber containing $F$ and lying in the same chamber as $C$ in $\A_{|F|}$.
In addition, if $C \preceq F$, denote by $C^F$ the chamber opposite to $C$ with respect to $F$.

The Salvetti complex of $\A$, first introduced in \cite{salvetti1987topology}, is a regular CW complex homotopy equivalent to the complement $M(\A)$ in $\mathbb{C}^n$ (see also \cite{gelfand1989algebraic, bjorner1992combinatorial, salvetti1994homotopy, orlik2013arrangements}).
Its poset of cells $\sal(\A)$ is defined as follows.
There is a $k$-cell $\cell{C,F}$ for each pair $(C,F)$ where $C \in \C$ is a chamber and $F \in \F$ is a face of $C$ of codimension $k$.
A cell $\cell{C,F}$ is in the boundary of $\cell{D,G}$ if and only if $F \prec G$ and $D.F = C$.

\begin{theorem}[\cite{salvetti1987topology}]
  The poset $\sal(\A)$ is the poset of cells of a regular CW complex homotopy equivalent to $M(\A)$.
\end{theorem}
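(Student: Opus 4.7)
The plan is to prove the theorem in two stages: first, realize the combinatorial poset $\sal(\A)$ as the face poset of a regular CW complex sitting inside $M(\A)$; second, exhibit this subcomplex as a deformation retract of $M(\A)$.

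For the first stage, I would construct cells locally near each face. For each pair $(C,F)$ with $F\preceq C$ and $k=\codim F$, build a closed topological $k$-disk $\bar e(C,F)\subseteq M(\A)$ inside a small normal tubular neighborhood of $F$. The local picture is: pick $x_F$ in the relative interior of $F$ and take a small complex normal slice $(N_F)_\mathbb{C}\cong\mathbb{C}^k$ to $|F|$ at $x_F$. The complexified arrangement restricts to this slice as the complexification of a central real arrangement on $N_F$, whose chambers are in natural bijection with chambers $D$ of $\A$ with $F\preceq D$. The cell $\bar e(C,F)$ is modeled on the chamber of $N_F$ selected by $C.F$, wrapped into the imaginary direction so as to form a closed $k$-ball whose boundary decomposes into cells $\bar e(D,G)$ corresponding to proper coface relations $F\prec G$ with $D.F=C$. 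A direct check shows that this boundary combinatorics reproduces exactly the incidence relation defining $\sal(\A)$, so the cells assemble into a regular CW structure on $X=\bigcup \bar e(C,F)$ whose face poset is $\sal(\A)$.

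For the second stage, I would show $X$ is a deformation retract of $M(\A)$. Given a point $z=x+iy\in M(\A)$, locate the face $F$ whose relative interior contains $x$; since $z\notin\A_\mathbb{C}$, the imaginary part $y$ avoids the hyperplanes of the central arrangement $\A^{|F|}$ on $N_F$, hence lies in a definite chamber of that arrangement. Together with $F$, this chamber pins down a unique cell $\bar e(C,F)$, and the retraction pushes $z$ onto it by contracting the real coordinate toward $x_F$ along $F$ and simultaneously moving the imaginary coordinate radially toward the appropriate portion of the disk.

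The main obstacle is global compatibility: the locally defined retractions near different faces must agree on overlaps of their tubular neighborhoods. When $F\prec G$, the retraction built near $G$ must restrict on the $F$-stratum to the retraction built near $F$. The combinatorial identity $D.F=(D.G).F$, which is precisely the content of the incidence relation in $\sal(\A)$, is exactly what guarantees that the chamber singled out on the $G$-side matches the chamber singled out on the $F$-side when one passes to the restriction $\A_{|F|}$. Assembling the local retractions into a continuous global retraction is then done by induction on codimension, extending along the cofibration provided by the regular CW structure from the first stage; this concludes the homotopy equivalence $|\sal(\A)|\simeq M(\A)$.
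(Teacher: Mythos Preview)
The paper does not prove this theorem: it is stated as background with a citation to Salvetti's original paper \cite{salvetti1987topology}, and no argument is given. There is therefore no ``paper's own proof'' to compare your proposal against.

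That said, your sketch is a reasonable outline of how Salvetti's construction actually goes, and the two-stage strategy (realize the poset as a regular CW complex inside $M(\A)$, then retract) is the right one. The places where your write-up remains genuinely incomplete are the ones you flag yourself: the phrase ``wrapped into the imaginary direction so as to form a closed $k$-ball'' hides the actual construction of the cell (in Salvetti's paper this is done via the dual cell decomposition of the real stratification, lifted to $\mathbb{C}^n$ using the imaginary part to select a chamber), and the global gluing of the local retractions needs more than the combinatorial identity $D.F=(D.G).F$ --- one has to produce an honest continuous homotopy, which in practice is done either by an explicit formula or by showing that the inclusion induces isomorphisms on all homotopy groups and invoking Whitehead. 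If you want a complete proof, consult \cite{salvetti1987topology} directly, or the treatments in \cite{bjorner1992combinatorial} and \cite{orlik2013arrangements}.
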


\subsection{Discrete Morse theory}
We recall here the main concepts of Forman's discrete Morse theory \cite{forman1998morse,forman2002user}.
We follow the point of view of Chari \cite{chari2000discrete}, using acyclic matchings instead of discrete Morse functions, and we make use of the generality of \cite[Section 3]{batzies2002discrete} for the case of infinite CW complexes.

Let $(P,<)$ be a ranked poset.
If $q < p$ in $P$ and there is no element $r\in P$ with $q<r<p$, then we write $q \lessdot p$.
Given $p \in P$ we define $P_{\leq p}= \{q \in P \mid q \leq p\}$.

Let $G$ be the Hasse diagram of $P$, i.e.\ the graph with vertex set $P$ and having an edge $(p, q)$ whenever $q \lessdot p$.
Denote by $\E = \{ (p,q) \in P\times P \mid q \lessdot p \}$ the set of edges of $G$.

Given a subset $\M$ of $\E$, we can orient all edges of $G$ in the following way: an edge $(p,q) \in \E$ is oriented from $p$ to $q$ if the pair does not belong to $\M$, otherwise in the opposite direction.
Denote this oriented graph by $G_{\M}$.

\begin{definition}[Acyclic matching \cite{chari2000discrete}]
  A \emph{matching} on $P$ is a subset $\M \subseteq \E$ such that every element of $P$ appears in at most one edge of $\M$.
  A matching $\M$ is \emph{acyclic} if the graph $G_{\M}$ has no directed cycle.
\end{definition}

Given a matching $\M$ on $P$, an \emph{alternating path} is a directed path in $G_{\M}$ such that two consecutive edges of the path do not both belong to $\M$ or both to $\E \setminus \M$.
The elements of $P$ that do not appear in any edge of $\M$ are called \emph{critical} (with respect to the matching $\M$).

\begin{definition}[Grading \cite{batzies2002discrete}]
  Let $Q$ be a poset.
  A poset map $\varphi \colon P \to Q$ is called a \emph{$Q$-grading} of $P$.
  The $Q$-grading $\varphi$ is \emph{compact} if $\varphi^{-1}(Q_{\leq q}) \subseteq P$ is finite for all $q \in Q$.
  A matching $\M$ on $P$ is \emph{homogeneous} with respect to the $Q$-grading $\varphi$ if $\varphi(p) = \varphi(p')$ for all $(p,p') \in \M$.
  An acyclic matching $\M$ is \emph{proper} if it is homogeneous with respect to some compact grading.
\end{definition}

The following is a direct consequence of the definition of a proper matching (cf.\ \cite[Definition 3.2.5 and Remark 3.2.17]{batzies2002discrete}).

\begin{lemma}[\cite{batzies2002discrete}]\label{lemma:alternating}
  Let $\M$ be a proper acyclic matching on a poset $P$, and let $p \in P$.
  Then there is a finite number of alternating paths starting from $p$, and each of them has a finite length.
\end{lemma}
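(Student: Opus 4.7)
The plan is to exploit the compact grading $\varphi \colon P \to Q$ witnessing the properness of $\M$, and to show that every directed path in $G_\M$ starting at $p$ (not just alternating ones) stays inside the finite set $\varphi^{-1}(Q_{\leq \varphi(p)})$. First I would analyze how $\varphi$ behaves along a single directed edge of $G_\M$. There are two cases: for an edge $u \to v$ coming from a matching pair $(v,u) \in \M$ (so $u \lessdot v$ in $P$ and the edge is reversed), homogeneity gives $\varphi(u) = \varphi(v)$; for an edge $u \to v$ coming from $\E \setminus \M$ (so $v \lessdot u$ in $P$), the fact that $\varphi$ is a poset map gives $\varphi(v) \leq \varphi(u)$. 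In both cases $\varphi$ is non-increasing along the edge.

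Iterating, any directed path in $G_\M$ starting at $p$ has all its vertices in $\varphi^{-1}(Q_{\leq \varphi(p)})$, which is finite by compactness of $\varphi$. In particular this holds for every alternating path starting at $p$.

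Next I would restrict $G_\M$ to this finite vertex set $S := \varphi^{-1}(Q_{\leq \varphi(p)})$; since the matching is acyclic, $G_\M$ has no directed cycle, so the induced directed graph on $S$ is a finite DAG. A standard induction on the reverse topological order of a finite DAG shows both that the length of any directed path starting at a given vertex is bounded, and that the total number of such directed paths is finite. Specializing to alternating paths (which are a subclass of directed paths in $G_\M$) finishes the proof.

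The argument is essentially a careful bookkeeping exercise, so there is no serious obstacle; the only point that requires attention is verifying the non-increasing behavior of $\varphi$ on \emph{both} orientations of edges in $G_\M$, i.e.\ checking that homogeneity handles the matching edges (which are the ones reversed, and where $\varphi$ might a priori strictly increase along the edge direction) precisely so as to preserve $\varphi$. Once this is in hand, the finiteness of $\varphi^{-1}(Q_{\leq \varphi(p)})$ combined with acyclicity of $\M$ immediately yields both conclusions of the lemma.
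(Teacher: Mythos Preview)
Your argument is correct. The key observation---that $\varphi$ is non-increasing along every directed edge of $G_\M$, with equality precisely on matching edges thanks to homogeneity---is exactly what makes the compactness hypothesis bite, and your reduction to a finite DAG cleanly delivers both conclusions.

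As for comparison: the paper does not actually prove this lemma. It is stated with a citation to \cite{batzies2002discrete} and introduced only as ``a direct consequence of the definition of a proper matching (cf.\ \cite[Definition 3.2.5 and Remark 3.2.17]{batzies2002discrete}).'' Your write-up is precisely the natural unpacking of that ``direct consequence,'' so there is no alternative approach to contrast with; you have simply supplied the details the paper chose to omit.
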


We are ready to state the main theorem of discrete Morse theory. This particular formulation follows from \cite[Theorem 3.2.14 and Remark 3.2.17]{batzies2002discrete}

\begin{theorem}[\cite{forman1998morse,chari2000discrete,batzies2002discrete}]
  Let $X$ be a regular CW complex, and let $P$ be its poset of cells.
  If $\M$ is a proper acyclic matching on $P$, then $X$ is homotopy equivalent to a CW complex $X_\M$ (called the \emph{Morse complex} of $\M$) with cells in dimension-preserving bijection with the critical cells of $X$.
\end{theorem}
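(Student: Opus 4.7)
The plan is to reduce the statement to iterated elementary collapses in the finite case and then use the compact grading witnessing properness to extend the construction to the infinite setting.

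The first step is a purely local observation. Suppose $(\tau,\sigma)\in\M$ is a matched pair (so $\sigma\lessdot\tau$) and $\sigma$ is a face of no other cell of $P$ besides $\tau$; since $X$ is regular, $\sigma$ is then a regular free face of $\tau$, so $X$ admits an elementary collapse removing precisely the cells $\sigma$ and $\tau$. I would call such a pair \emph{collapsible}, and the goal is to realize $X\simeq X_\M$ by (possibly infinitely many) such elementary collapses.

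Next I would treat the finite case: assume $P$ is finite. Using the acyclicity of $\M$, I would argue that whenever $\M\neq\emptyset$ there exists a collapsible pair. The idea is to pick a matched pair $(\tau,\sigma)$ that is minimal in the orientation of $G_\M$: any other cell covering $\sigma$ would either start or lie on an alternating path contradicting acyclicity or contradicting minimality. Performing the elementary collapse yields a smaller regular CW complex, a smaller poset $P'$, and the restriction of $\M$ is still an acyclic matching with the same critical cells. Iterating produces a finite sequence of elementary collapses from $X$ onto a regular CW complex $X_\M$ whose cells are exactly the critical cells, in dimension-preserving bijection.

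For the infinite case, the main difficulty is that one cannot simply iterate collapses indefinitely, and separate collapses performed in different regions must be assembled coherently. Here I would use the compact grading $\varphi\colon P\to Q$: for each $q\in Q$ the subposet $P_q:=\varphi^{-1}(Q_{\leq q})$ is finite and, because $\M$ is homogeneous with respect to $\varphi$ and $\varphi$ is a poset map, $P_q$ is a down-set carrying a finite acyclic matching with the same collapsible-pair structure as above. Applying the finite case inside each $P_q$ in a compatible way, and taking the directed union over $Q$, produces the Morse complex $X_\M$. Equivalently, one can define $X_\M$ directly by taking one cell per critical cell of $\M$ and defining its attaching map by pushing the attaching map in $X$ through the ``gradient flow'' given by alternating paths: a boundary cell of a critical $\sigma$ is either itself critical or it is matched, in which case one follows the alternating path to the next critical cell. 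Lemma \ref{lemma:alternating} is exactly what makes this construction well-defined, since it guarantees that only finitely many alternating paths emanate from any given cell and each has finite length.

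The hard step, I expect, is not the combinatorial construction of $X_\M$ but verifying the homotopy equivalence $X\simeq X_\M$ rigorously in the infinite setting. In the finite case it follows from a finite composition of elementary collapses, but in the infinite case one needs to check that the simultaneous, possibly transfinite assembly of these collapses still yields a homotopy equivalence. The cleanest route is to use the filtration $\{X_q\}_{q\in Q}$ of $X$ by the subcomplexes supported on $P_q$, perform the finite collapses inside each stratum so that the resulting filtration of $X_\M$ is compatible, and then invoke a standard colimit argument (Whitehead-type, relying on the properness of $\varphi$) to conclude that the induced map between the colimits is a homotopy equivalence.
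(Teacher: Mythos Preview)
The paper does not prove this theorem; it is a background result quoted from \cite{forman1998morse,chari2000discrete,batzies2002discrete}, with the precise formulation taken from \cite[Theorem 3.2.14 and Remark 3.2.17]{batzies2002discrete}. So there is nothing in the paper to compare your argument against.

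That said, your finite-case argument has a genuine gap. You claim that whenever $\M\neq\emptyset$ there is a matched pair $(\tau,\sigma)$ with $\sigma$ a free face of $\tau$ in $X$, so that an elementary collapse is available. This is false. Take $X$ to be the boundary of a triangle, with vertices $v_1,v_2,v_3$ and edges $e_1=\{v_1,v_2\}$, $e_2=\{v_2,v_3\}$, $e_3=\{v_3,v_1\}$, and let $\M=\{(e_1,v_1),(e_2,v_2)\}$. This matching is acyclic (the only nontrivial directed path is $e_3\to v_1\to e_1\to v_2\to e_2\to v_3$), but no vertex of a circle is a free face, so $X$ admits no elementary collapse whatsoever. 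More generally, any triangulated closed manifold carries nontrivial acyclic matchings yet has no free faces at all. The Morse complex $X_\M$ is therefore not, in general, a subcomplex of $X$ reached by collapsing; it is a genuinely new CW complex with modified attaching maps.

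Your alternative description via the gradient flow---one cell per critical cell, with attaching maps pushed along alternating paths, using Lemma \ref{lemma:alternating} for finiteness---is the correct construction, but it is not equivalent to the collapse picture. The standard proof proceeds by \emph{building up} rather than collapsing down: one linearly orders the cells so that in each matched pair $(\tau,\sigma)$ the face $\sigma$ immediately precedes $\tau$, and observes that $\sigma$ is a free face of $\tau$ in the \emph{current subcomplex} (not in all of $X$), so that adjoining the pair does not change the homotopy type. Your treatment of the infinite case via the compact grading and a colimit over the filtration $\{P_q\}$ is essentially the right framework once the finite step is repaired along these lines.
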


The construction of the Morse complex is explicit in terms of the CW complex $X$ and the matching $\M$ (see for example \cite{batzies2002discrete}).
This allows us to obtain relations between the incidence numbers with $\Z$ coefficients in the Morse complex and incidence numbers in the starting complex.

\begin{theorem}[{\cite[Theorem 3.4.2]{batzies2002discrete}}]
  \label{thm:incidence}
  Let $X$ be a regular CW complex, $P$ its poset of cells and $\M$ a proper acyclic matching on $P$.
  Let $X_\M$ be the Morse complex of $\M$.
  Given two critical cells $\sigma, \tau \in X$ with $\dim \sigma = \dim \tau +1$, denote by $\sigma_\M$ and $\tau_\M$ the corresponding cells in $X_\M$.
  Then the incidence number between $\sigma_\M$ and $\tau_\M$ in $X_\M$ is given by
  \begin{equation*}
    [\sigma_\M : \tau_\M]_{X_\M} \,= \sum_{\gamma \in \Gamma(\sigma,\tau)}{m(\gamma)},
  \end{equation*}
  where $\Gamma(\sigma,\tau)$ is the set of all alternating paths between $\sigma$ and $\tau$.
  If $\gamma \in \Gamma(\sigma,\tau)$ is of the form
  \begin{equation*}
    \sigma=\sigma_0 \searrow \tau_1 \nearrow \sigma_1 \searrow \ldots \tau_k \nearrow \sigma_k \searrow \tau,
  \end{equation*}
  then $m(\gamma)$ is given by
  \begin{equation*}
    m(\gamma) \,=\, (-1)^{k}[\sigma_k : \tau] \, \prod_{i=1}^k{[\sigma_{i-1}:\tau_i][\sigma_i:\tau_i]}.
  \end{equation*}
\end{theorem}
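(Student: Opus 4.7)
The plan is to prove the incidence number formula via algebraic discrete Morse theory, realizing the Morse complex as a deformation retract of the cellular chain complex of $X$. Let $(C_*(X), \partial)$ denote the integral cellular chain complex; since $X$ is regular, each $[p:q]$ for $q \lessdot p$ belongs to $\{0,\pm 1\}$. I would partition the cells of $X$ into three classes — critical cells, \emph{lower} cells $\tau$ that appear as the bottom of a matched pair $(\sigma,\tau) \in \M$, and \emph{upper} cells $\sigma$ that appear as the top of such a pair — and define a degree-$+1$ operator $v \colon C_*(X) \to C_{*+1}(X)$ by $v(\tau) = -[\sigma:\tau]\,\sigma$ whenever $(\sigma,\tau) \in \M$, with $v = 0$ on critical and upper cells. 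A direct check yields $v^2 = 0$, so $\phi := \mathrm{id} + \partial v + v\partial$ is a chain map chain-homotopic to the identity via $v$.

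The structural observation driving the proof is that $\phi$ annihilates each matched cell modulo cells reached by one further step of the alternating flow, and fixes critical cells. In other words, each application of $\phi$ pushes the support of a chain one zigzag further along alternating paths. Invoking the properness of $\M$ via Lemma \ref{lemma:alternating}, every cell $p$ admits only finitely many alternating paths emanating from it, each of finite length, so the iterates $\phi^N(p)$ stabilize for $N$ large. The limit $\pi := \lim_N \phi^N$ is a chain projection of $C_*(X)$ onto the free sub-complex $C_*^{\mathrm{crit}}$ generated by the critical cells, and the homotopy $\sum_{N\geq 0} v\phi^N$ exhibits $C_*^{\mathrm{crit}}$ as a strong deformation retract. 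Under the dimension-preserving bijection between critical cells of $X$ and cells of $X_\M$, this chain complex coincides with the cellular chain complex of $X_\M$.

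The incidence number $[\sigma_\M : \tau_\M]$ is then the coefficient of $\tau$ in $\pi(\partial \sigma)$. Expanding $\pi(\partial \sigma) = \lim_N \phi^N(\partial \sigma)$ monomially, each nonzero contribution arises by iteratively picking a matched lower cell $\tau_i$ in the current expansion, replacing it via $v$ with $-[\sigma_i : \tau_i]\,\sigma_i$, and taking the next boundary. This procedure is in bijection with alternating paths $\sigma = \sigma_0 \searrow \tau_1 \nearrow \sigma_1 \searrow \cdots \nearrow \sigma_k \searrow \tau$, and the product of incidences accumulated along the way factors as $[\sigma_0:\tau_1](-[\sigma_1:\tau_1])[\sigma_1:\tau_2](-[\sigma_2:\tau_2])\cdots(-[\sigma_k:\tau_k])[\sigma_k:\tau]$, which rearranges to exactly $m(\gamma) = (-1)^k [\sigma_k:\tau] \prod_{i=1}^{k} [\sigma_{i-1}:\tau_i][\sigma_i:\tau_i]$.

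The main obstacle I anticipate is the sign bookkeeping: each application of $v$ introduces both a minus sign and a factor $[\sigma_i:\tau_i]$, and one must confirm that the resulting product matches the stated formula, with the factor $(-1)^k$ arising precisely from the $k$ applications of $v$ used by the path. Convergence of the iteration in the locally-finite-but-possibly-infinite setting is handled cleanly by Lemma \ref{lemma:alternating}, which both guarantees termination of $\phi^N$ on each generator and ensures that the sum $\sum_\gamma m(\gamma)$ is a finite sum.
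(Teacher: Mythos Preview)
The paper does not prove Theorem~\ref{thm:incidence}; it is quoted from \cite[Theorem 3.4.2]{batzies2002discrete} as a known result and used as a black box. There is therefore no proof in the paper to compare your proposal against.

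Your sketch is the standard algebraic discrete Morse theory argument (as in Forman, Sk\"oldberg, and J\"ollenbeck--Welker) and is essentially sound: the flow operator $\phi=\mathrm{id}+\partial v+v\partial$, iterated to stability thanks to Lemma~\ref{lemma:alternating}, produces a chain projection onto the span of critical cells whose induced differential is read off exactly by the alternating-path expansion you describe; the sign bookkeeping you carry out is correct and yields $m(\gamma)$. Two minor points of imprecision: (i) $\phi$ does not literally fix a critical cell $c$ (since $v\partial c$ need not vanish), though the limit $\pi$ does send $c$ to $c$ plus matched corrections, which is what you actually need; (ii) your argument constructs the \emph{algebraic} Morse complex, and the assertion that this coincides with the cellular chain complex of the \emph{topological} CW complex $X_\M$ built in \cite{batzies2002discrete} is precisely the nontrivial content of Batzies's result. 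For every application in the present paper the algebraic statement suffices, so this gap is immaterial here, but it is worth flagging if you intend this as a self-contained proof of the theorem as stated.
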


Finally, recall the following standard tool for constructing acyclic matchings.

\begin{theorem}[Patchwork theorem {\cite[Theorem 11.10]{kozlov2007combinatorial}}]\label{teo:patchwork}
  Let $\varphi \colon P \to Q$ be a $Q$-grading of $P$.
  For all $q \in Q$, assume to have an acyclic matching $\M_q \subseteq \E$ that involves only elements of the subposet $\varphi^{-1}(q) \subseteq P$.
  Then the union of these matchings is itself an acyclic matching on $P$.
\end{theorem}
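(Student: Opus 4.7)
The plan is to verify the two defining conditions of an acyclic matching for the union $\M := \bigcup_{q \in Q} \M_q$. The combinatorial matching condition is immediate: since the fibers $\varphi^{-1}(q)$ partition $P$ and each $\M_q$ is supported inside $\varphi^{-1}(q)$, the sets $\M_q$ are pairwise disjoint, so no element of $P$ can appear in edges of two distinct $\M_q$; combined with the fact that each $\M_q$ is a matching on its own fiber, this makes $\M$ a matching on $P$.

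The bulk of the work is showing that $G_{\M}$ is acyclic. The key observation is that $\varphi$ behaves monotonically along oriented edges of $G_{\M}$. Indeed, let $(p, p')$ be a directed edge of $G_{\M}$. If its underlying Hasse edge is matched, then by hypothesis both endpoints lie in the same fiber $\varphi^{-1}(q)$, so $\varphi(p) = \varphi(p')$. If instead the Hasse edge is unmatched, then it is oriented downward, i.e.\ $p' \lessdot p$, and since $\varphi$ is a poset map we have $\varphi(p') \leq \varphi(p)$. Therefore along any directed cycle in $G_{\M}$ the sequence of $\varphi$-values is nonincreasing and must return to its starting value, which forces it to be constantly equal to some $q_0 \in Q$. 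The cycle then lies entirely inside $\varphi^{-1}(q_0)$ and constitutes a directed cycle in $G_{\M_{q_0}}$, contradicting the acyclicity of $\M_{q_0}$.

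There is no real obstacle to this proof: once the orientation convention (matched edges reversed, unmatched edges pointing downward) is made explicit, the $Q$-grading together with the poset-map property of $\varphi$ do all the work, and the patchwork conclusion follows by a one-line fiberwise contradiction. The only point that merits some care is the direction of the inequality along unmatched edges, since a sign error there would make the whole monotonicity argument collapse.
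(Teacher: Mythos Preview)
Your argument is correct and is essentially the standard proof of the Patchwork theorem. Note, however, that the paper does not give its own proof of this statement: it is quoted as a known result from Kozlov's textbook \cite[Theorem 11.10]{kozlov2007combinatorial}, so there is no in-paper proof to compare against. Your monotonicity-of-$\varphi$ argument along directed edges of $G_{\M}$ is exactly the usual one found in the literature.
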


\subsection{Polyhedra, polytopes, and shellability}
\label{sec:shellability}

In this section we briefly recall some notions and results from \cite{ziegler2012lectures}.

\begin{definition}
  A \emph{polyhedron} is an intersection of finitely many closed halfspaces in some $\R^d$.
  A \emph{polytope} is a bounded polyhedron.
\end{definition}

Given a polyhedron $P$, denote by $\F(P)$ the complex of its faces (considering the polyhedron $P$ itself as a trivial face).
The faces of codimension $1$ are called \emph{facets}.
In addition, denote by $\F(\partial P)$ the \emph{boundary complex} of $P$, i.e.\ the complex that contains only the proper faces of $P$.

 \begin{definition}
  We say that a facet $G \in \F(P)$ is \emph{visible} from a point $p \in \R^d$ if every line segment from $p$ to a point of $G$ does not intersect the interior of $P$ (cf.\ \cite[Theorem 8.12]{ziegler2012lectures}).
  We say that a face $F \in \F(P)$ is \emph{visible} from $p$ if all the facets $G \supseteq F$ of $P$ are visible from $p$.
  In particular, notice that the entire polyhedron $P$ is always visible from $p$.
\end{definition}

We are now able to recall the notion of shellability of the boundary complex of a polytope.

\begin{definition}[{\cite[Definition 8.1]{ziegler2012lectures}}]
  A \emph{shelling} of the boundary complex of a polytope $P$ is a linear ordering $F_1,F_2,\ldots,F_s$ of the facets of $P$ such that either the facets are points, or the following conditions are satisfied.
  
  \begin{enumerate}
    \item The boundary complex $\F(\partial F_1)$ of the first facet has a shelling.
  
    \item For $1<j \leq s$, the intersection of the facet $F_j$ with the previous facets is nonempty and is a beginning segment of a shelling of $\F(\partial F_j)$, that is
    \begin{equation*}
      F_j \cap \left(\bigcup_{i=1}^{j-1}{F_i} \right)=G_1 \cup G_2 \cup\dots \cup G_r
    \end{equation*}
    for some shelling $G_1,G_2,\ldots,G_r,\ldots,G_t$ of $F_j$, with $1 \leq r \leq t$.
    A facet $F_j$ is called a \emph{spanning facet} if $r=t$.
  \end{enumerate}
  A polytope is \emph{shellable} if its boundary complex has a shelling.
\end{definition}

To conclude, recall the following two results about shellability of the boundary complex of a polytope.

\begin{lemma}[{\cite[Lemma 8.10]{ziegler2012lectures}}]
  \label{lemma:reverse-shelling}
  If $F_1,F_2,\ldots,F_s$ is a shelling order for the boundary of a polytope $P$, then so is the reverse order $F_s,F_{s-1},\ldots,F_1$.
\end{lemma}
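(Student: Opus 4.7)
The plan is induction on the dimension $d$ of $P$. The base case $d=1$ is immediate, since any ordering of the two endpoints of a segment is a shelling. For the inductive step, assume the lemma for polytopes of dimension smaller than $d$, and let $F_1,\dots,F_s$ be a shelling of a $d$-polytope $P$. Condition~(1) of a shelling for the reverse order is easy: every ridge of $P$ contained in $F_s$ lies in exactly one other facet of $P$, which must be among $F_1,\dots,F_{s-1}$, so $F_s \cap (F_1 \cup \cdots \cup F_{s-1}) = \partial F_s$, and the forward condition at step $s$ exhibits a shelling of $\partial F_s$.

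The crux is condition~(2). Fix $1 \le j < s$ and let $G_1,\dots,G_t$ be the shelling of $\partial F_j$ provided by the forward shelling at step $j$, with $G_1 \cup \cdots \cup G_r = F_j \cap (F_1 \cup \cdots \cup F_{j-1})$ (taking $r=0$ when $j=1$). I would establish the identity
\[
F_j \cap (F_{j+1} \cup \cdots \cup F_s) \;=\; G_{r+1} \cup \cdots \cup G_t.
\]
The inclusion $\supseteq$ is easy: each ridge $G_k$ with $k > r$ is shared with a unique other facet of $P$, which cannot be among $F_1,\dots,F_{j-1}$ (else $G_k$ would already be contained in $G_1 \cup \cdots \cup G_r$), and so lies among $F_{j+1},\dots,F_s$. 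Granted the identity, the inductive hypothesis applied to $\partial F_j$ says that $G_t, G_{t-1},\dots,G_1$ is a shelling of $\partial F_j$, and the right-hand side is its initial segment of length $t-r$, which is exactly what condition~(2) requires.

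The main obstacle is the inclusion $\subseteq$, because a priori a later facet $F_i$ might meet $F_j$ in a face of codimension greater than one, producing a lower-dimensional contribution on the left-hand side. This is controlled by the topological fact that in a shelling of the $(d-1)$-sphere $\partial P$ each intermediate union $F_1 \cup \cdots \cup F_j$ is a $(d-1)$-ball; any point of $F_j$ meeting a later facet must lie on the boundary of this ball, and that boundary restricted to $F_j$ is precisely the complementary sub-ball $G_{r+1} \cup \cdots \cup G_t$ of $\partial F_j$. Establishing this ball structure is itself an auxiliary piece of the induction, but it is the only non-combinatorial input required; once it is in hand, the argument above closes.
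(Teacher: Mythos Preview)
The paper does not give its own proof of this lemma: it is simply quoted from Ziegler's book \cite[Lemma 8.10]{ziegler2012lectures} as background. So there is nothing to compare your proposal against in the paper itself.

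That said, your sketch is essentially Ziegler's argument. You have correctly isolated the only nontrivial point, namely the inclusion
\[
F_j \cap (F_{j+1}\cup\cdots\cup F_s)\;\subseteq\;G_{r+1}\cup\cdots\cup G_t,
\]
and you are right that a priori a later facet could meet $F_j$ in a low-dimensional face. The fix you propose --- that each partial union $F_1\cup\cdots\cup F_j$ is a $(d-1)$-ball whose boundary, intersected with $F_j$, is exactly $G_{r+1}\cup\cdots\cup G_t$ --- is precisely the topological lemma Ziegler establishes alongside his Lemma 8.10. Once that is granted, your inductive scheme closes as written. One small remark: you should also note why the displayed intersection is nonempty (i.e.\ why $r<t$ for $j<s$); this follows because $\partial P$ is a pseudomanifold, so every ridge lies in exactly two facets and at least one facet of $F_j$ must be shared with a later $F_i$.
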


\begin{theorem}[\cite{bruggesser1972shellable}, {\cite[Theorem 8.12]{ziegler2012lectures}}]
  \label{thm:line-shelling}
  Let $P \subseteq \R^d$ be a $d$-polytope, and let $x \in \R^d$ be a point outside $P$.
  If $x$ lies in general position (that is, not in the affine hull of a facet of $P$), then the boundary complex of the polytope has a shelling in which the facets of $P$ that are visible from $x$ come first.
\end{theorem}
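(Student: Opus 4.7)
The plan is to use the classical Bruggesser--Mani \emph{rocket flight} construction: I would produce a shelling of $\partial P$ by recording the order in which a generic line through $x$ traverses the affine hulls of the facets.

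First I would choose a line $\ell$ through $x$ in sufficiently generic position, namely: $\ell$ passes through the interior of $P$, meets each facet hyperplane $\mathrm{aff}(F)$ in a single point, and all these intersection points are distinct. Such a line exists because $x$ lies in general position and these conditions cut out an open dense subset of directions. Parametrise $\ell$ by a time variable with $t = 0$ at $x$ and $t$ increasing toward $P$. As $t$ grows, $\ell$ first crosses some facet hyperplanes outside $P$, enters $P$ at some $t = t_+$, exits at $t = t_-$, and escapes to $+\infty$; identifying $+\infty$ with $-\infty$ gives a circular parameter on $\ell \cup \{\infty\}$ that wraps back to $x$. This yields a definite cyclic order on the facets, starting at $t = 0^+$; label them $F_1, F_2, \ldots, F_s$ in this order.

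By construction, a facet $F$ is visible from $x$ if and only if $x$ and the interior of $P$ lie on opposite sides of $\mathrm{aff}(F)$, equivalently $\ell$ crosses $\mathrm{aff}(F)$ at some $t \in (0, t_+)$, i.e.\ before entering $P$. Hence the facets visible from $x$ are exactly $F_1, \ldots, F_k$ for some $k$ and form an initial segment of the list. It remains to verify that $F_1, F_2, \ldots, F_s$ is a shelling of $\partial P$, which I would do by induction on the dimension $d$. The base case $d = 1$ is immediate since $\partial P$ consists of two vertices and any ordering is a shelling. For the inductive step, consider the point $y_j = \ell \cap \mathrm{aff}(F_j)$. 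The intersection $F_j \cap \bigcup_{i<j} F_i$ equals the union of those facets $G = F_j \cap F_i$ of $F_j$ with $i < j$. The key claim is that these are exactly the facets of $F_j$ visible from $y_j$ inside the hyperplane $\mathrm{aff}(F_j)$, and that $y_j$ is in general position with respect to $\partial F_j$. The inductive hypothesis applied to the $(d-1)$-polytope $F_j$ and the point $y_j$ then produces a shelling of $\partial F_j$ whose initial segment is precisely this collection, which is exactly the shelling axiom for $F_j$.

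The main obstacle is the geometric identification in the inductive step: showing that a facet $G = F_j \cap F_i$ of $F_j$ is visible from $y_j$ in $\mathrm{aff}(F_j)$ if and only if $i < j$ in the cyclic order. This reduces to checking, inside $\mathrm{aff}(F_j)$, on which side of the ridge $\mathrm{aff}(F_i) \cap \mathrm{aff}(F_j)$ the point $y_j$ lies relative to $F_j$, and correlating this with whether the crossings $y_i$ and $y_j$ occur in the same ``leg'' of the circular flight or are split by the point at infinity. Once the visibility/cyclic-order dictionary is set up carefully, the claim reduces to a direct sign computation. If it is more convenient to have the visible facets at the end rather than the beginning, one can run the rocket in the reverse direction and combine with Lemma~\ref{lemma:reverse-shelling}.
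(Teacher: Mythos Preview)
The paper does not prove this theorem; it is quoted without proof as background from the cited references. Your sketch follows the classical Bruggesser--Mani line-shelling argument from those sources, but the parametrisation you chose contains a real error: starting the cyclic order at $x$ and flying \emph{toward} $P$ does not in general produce a shelling. Take $P = [-1,1]^2$, $x = (0,-10)$, and $\ell$ a nearly vertical line through $x$ and the origin. Only the bottom edge is visible from $x$, and your recipe gives the facet order bottom, top, right, left (for a suitable small tilt); but the top edge is disjoint from the bottom edge, so $F_2 \cap F_1 = \emptyset$ and the shelling axiom already fails at $j=2$. In your inductive step the key claim---that the ridges $F_i \cap F_j$ with $i<j$ are the facets of $F_j$ visible from $y_j$---breaks down: for the exit facet $F_{k+1}$ the point $y_{k+1}$ lies \emph{inside} $F_{k+1}$, so neither the visibility identification nor the inductive hypothesis applies.

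The correct rocket flight starts at a point in the \emph{interior} of $P$ and travels in the direction of $x$. Then the hyperplanes crossed before the rocket reaches $x$ are exactly those of the facets visible from $x$, so the visible facets form an initial segment; and for each $j \ge 2$ the point $y_j$ lies outside $F_j$ with the previously listed facets $F_1,\dots,F_{j-1}$ being precisely those visible from $y_j$, so the inductive visibility argument goes through as you intended.
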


 \section{Decomposition of the Salvetti complex}
\label{sec:decomposition}

Our aim is to construct an acyclic matching on the Salvetti complex of a locally finite affine arrangement $\A$, with critical cells in explicit bijection with the chambers of $\A$.
Following the ideas of Delucchi \cite{delucchi2008shelling}, we want to decompose the Salvetti complex into ``pieces'' (one piece for every chamber) and construct an acyclic matching on each of these pieces with exactly one critical cell.
More formally, we are going to decompose the poset of cells $\sal(\A)$ as a disjoint union
\[ \sal(\A) = \bigsqcup_{C \in \C} N(C), \]
so that every subposet $N(C) \subseteq \sal(\A)$ admits an acyclic matching with one critical cell.

\begin{definition}
  Given a chamber $C \in \C$, let $S(C) \subseteq \sal(\A)$ be the set of all the cells $\cell{C',F} \in \sal(\A)$ such that $C' = C.F$.
In other words, a cell $\langle C' , F \rangle $ is in $S(C)$ if all the hyperplanes in $\supp(F)$ do not separate $C$ and $C'$.
\end{definition}

Notice that the cells in $S(C)$ form a subcomplex of the Salvetti complex (using poset terminology, $S(C)$ is a lower ideal in $\sal(\A)$).
This subcomplex is dual to the stratification of $\R^n$ induced by $\A$.
Also, the natural map $S(C) \to \F$ which sends $\cell{C',F}$ to $F$ is a poset isomorphism.

Now fix a total order $\dashv$ of the chambers:
\[ \C = \{ C_0 \dashv C_1 \dashv C_2 \dashv \dots \} \]
(when $\C$ is infinite, the order type is that of natural numbers).

\begin{definition}
  For every chamber $C \in \C$, let $N(C) \subseteq S(C)$ be the subset consisting of all the cells not included in any $S(C')$ with $C' \dashv C$.
\end{definition}

The union of the subcomplexes $S(C)$, for $C \in \C$, is the entire complex $\sal(\A)$.
Thus the subsets $N(C)$, for $C \in \C$, form a partition of $\sal(\A)$.
All the $0$-cells are contained in $N(C_0) = S(C_0)$.
Therefore, for $C \neq C_0$, the cells of $N(C)$ do not form a subcomplex of the Salvetti complex.
If $\A$ is a (finite) central arrangement, this definition of $N(C)$ coincides with the one given in \cite[Section 4]{delucchi2008shelling}.

We want now to choose the total order $\dashv$ of the chambers so that each $N(C)$ admits an acyclic matching with exactly one critical cell.
In \cite{delucchi2008shelling}, this is done taking any linear extension of the partial order $\leq_{C_0}$, for any base chamber $C_0$.
Such a total order works well for central arrangements but not for general affine arrangements, as we see in the following two examples.

\begin{example}
  Consider a non-central arrangement of three lines in the plane, as in Figure \ref{fig:two-arrangements} on the left.
  Choose $C_0$ to be one of the three unbounded chambers with two walls.
  In any linear extension of $\leq_{C_0}$, the last chamber $C_6$ must be the non-simplicial unbounded chamber opposite to $C_0$.
  However, $S(C_6) \subseteq \bigcup_{C \neq C_6} S(C)$, so $N(C_6)$ is empty, and therefore it does not admit an acyclic matching with one critical cell.
  Figure \ref{fig:non-valid} shows the decomposition of the Salvetti complex for one of the possible linear extensions of $\leq_{C_0}$.
  \label{example:non-valid}\end{example}

\begin{example}
  Consider the arrangement of five lines depicted on the right in Figure \ref{fig:two-arrangements}.
  For every choice of a base chamber $C_0$ and for every linear extension of $\leq_{C_0}$, there is some chamber $C$ such that $N(C)$ is empty.
\end{example}

\newcommand{\arrangement}{
  \draw (3.5,0) -- (3.5,4);
  \draw (0,1.25) -- (5,3.5);
  \draw (0,2.75) -- (5,0.5);
}
\newcommand{\chambers}{
  \coordinate (C0) at (0.5,2) {};
  \coordinate (C1) at (2,1) {};
  \coordinate (C2) at (2,3) {};
  \coordinate (C3) at (2.9,2) {};
  \coordinate (C4) at (4.1,0.4) {};
  \coordinate (C5) at (4.1,3.6) {};
  \coordinate (C6) at (4.3,2) {};
}

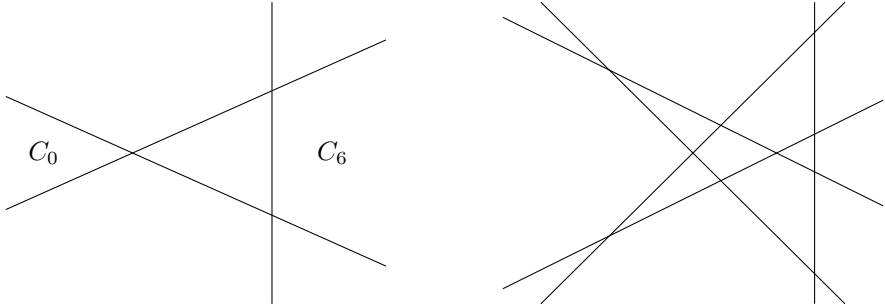
\begin{figure}
  \begin{tikzpicture}
    \arrangement
    \node (C0) at (0.5,2) {$C_0$};
    \node (C1) at (2,1) {};
    \node (C2) at (2,3) {};
    \node (C3) at (2.9,2) {};
    \node (C4) at (4.1,0.4) {};
    \node (C5) at (4.1,3.6) {};
    \node (C6) at (4.3,2) {$C_6$};
  \end{tikzpicture}
  \qquad\qquad
  \begin{tikzpicture}
    \draw (4.1,0) -- (4.1,4);
    \draw (0,0.2) -- (5,2.7);
    \draw (0,3.8) -- (5,1.3);
    \draw (0.5,4) -- (4.5,0);
    \draw (0.5,0) -- (4.5,4);
  \end{tikzpicture}
  
  \caption{Two line arrangements.}
  \label{fig:two-arrangements}
\end{figure}

We are now going to state a condition on the total order $\dashv$ on $\C$ that produces a decomposition of the Salvetti complex with the desired properties.
First recall the following definition from \cite{delucchi2008shelling}.

\begin{definition}
  Given a chamber $C$ and a total order $\dashv$ on $\C$, let
  \[ \J(C) = \{ X \in \L \mid \supp(X) \cap s(C,C')\neq \emptyset \;\; \forall\, C' \dashv C \}. \]
\end{definition}

Notice that $\J(C)$ is an upper ideal of $\L$, and it coincides with $\L$ for $C=C_0$.
In \cite[Theorem 4.15]{delucchi2008shelling} it is proved that, if $\A$ is a (finite) central arrangement and $\dashv$ is a linear extension of $\leq_{C_0}$ (for any choice of $C_0 \in \C$), then $\J(C)$ is a principal upper ideal for every chamber $C \in \C$.
This is the condition we need.

\begin{definition}[Valid order]
  A total order $\dashv$ on $\C$ is \emph{valid} if, for every chamber $C \in \C$, $\J(C)$ is a principal upper ideal generated by some flat $X_C = |F_C| \in \L$ where $F_C$ is a face of $C$.
  \label{def:valid-order}\end{definition}

The total orders of Example \ref{example:non-valid} are not valid, because $\J(C_6)$ is empty.
A valid order that begins with the chamber $C_0$ of Example \ref{example:non-valid} is shown in Figure \ref{fig:valid}.

The previous definition is the starting point of our answer to \cite[Question 4.18]{delucchi2008shelling}, where it was asked for an extension of the arguments of \cite{delucchi2008shelling} to affine arrangements.
Sections \ref{sec:matching} and \ref{sec:euclidean} will motivate this definition.

\begin{remark}
  \label{rmk:polar-order}
  If $\A$ is a finite affine arrangement, the polar order of the chambers defined by Salvetti and Settepanella \cite[Definition 4.5]{salvetti2007combinatorial} is valid.
  Indeed, $\J(C)$ is a principal upper ideal generated by $X_C = |F_C|$, where $F_C$ is the smallest face of $C$ with respect to the polar order of the faces.
  Therefore Definition \ref{def:valid-order} highlights the link between the constructions of \cite{salvetti2007combinatorial} and \cite{delucchi2008shelling} (see also \cite[Remark 3.8]{delucchi2008shelling}).
  The results of Section \ref{sec:matching}, if applied to polar orders, give rise to acyclic matchings that are related to the polar matchings of \cite{salvetti2007combinatorial}.
\end{remark}

\tikzstyle{salvettivertices}=[every node/.style={circle,inner sep=2pt,fill=black}]
\tikzstyle{salvettiedges}=[every path/.style={->,very thick}]
\tikzstyle{XC}=[every path/.style={red, dashed, very thick}, every node/.style={red}]

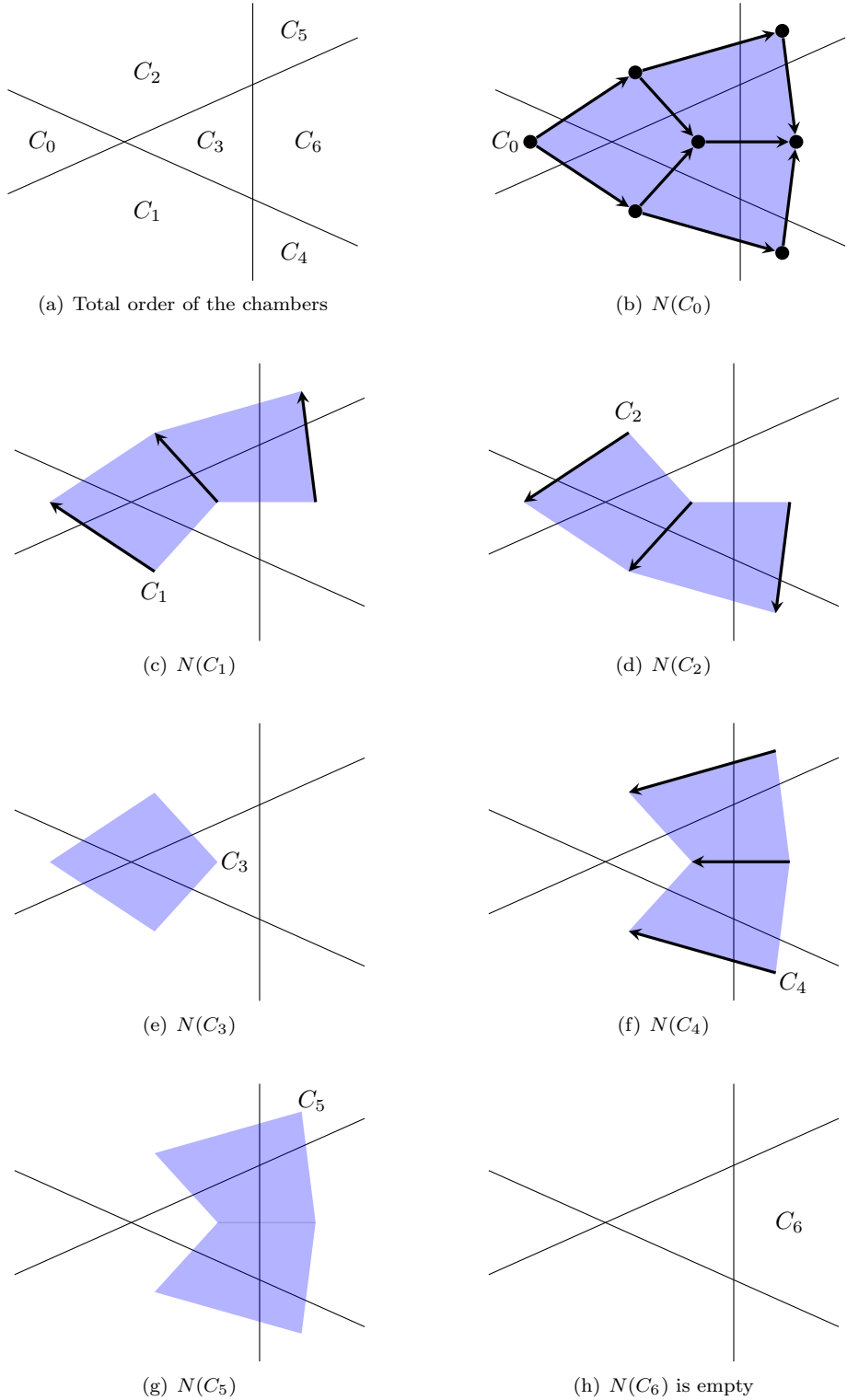
\begin{figure}
  \subfigure[Total order of the chambers]{
  \begin{tikzpicture}
    \arrangement
    \node (C0) at (0.5,2) {$C_0$};
    \node (C1) at (2,1) {$C_1$};
    \node (C2) at (2,3) {$C_2$};
    \node (C3) at (2.9,2) {$C_3$};
    \node (C4) at (4.1,0.4) {$C_4$};
    \node (C5) at (4.1,3.6) {$C_5$};
    \node (C6) at (4.3,2) {$C_6$};
  \end{tikzpicture}
  }
  \qquad\qquad
  \subfigure[$N(C_0)$]{
    \begin{tikzpicture}[salvettivertices]
      \arrangement
      \chambers
       \node[circle,inner sep=2pt,fill=white] (c0) at (0.15,2) {$C_0$};
      \begin{scope}[every path/.style={opacity=0.3, left color=blue, right color=white}]
        \fill (C0) -- (C1) -- (C3) -- (C2);
        \fill (C2) -- (C3) -- (C6) -- (C5);
        \fill (C1) -- (C3) -- (C6) -- (C4);
      \end{scope}
      \foreach \i in {0,...,6}
        \node (P\i) at (C\i) {};
      \begin{scope}[salvettiedges]
        \draw (P0) -> (P1);
        \draw (P0) -> (P2);
        \draw (P1) -> (P3);
        \draw (P2) -> (P3);
        \draw (P1) -> (P4);
        \draw (P2) -> (P5);
        \draw (P3) -> (P6);
        \draw (P4) -> (P6);
        \draw (P5) -> (P6);
      \end{scope}
     
    \end{tikzpicture}
  }
  
  \bigskip
  
  \subfigure[$N(C_1)$]{
    \begin{tikzpicture}
      \arrangement
      \chambers
      \begin{scope}[every path/.style={opacity=0.3, bottom color=blue, top color=white}]
        \fill (C0) -- (C1) -- (C3) -- (C2);
        \fill[shading angle=-20] (C2) -- (C3) -- (C6) -- (C5);
      \end{scope}
      \begin{scope}[salvettiedges]
        \draw (C1) -> (C0);
        \draw (C3) -> (C2);
        \draw (C6) -> (C5);
      \end{scope}
      \node (C1) at (2,0.7) {$C_1$};
    \end{tikzpicture}
  }
  \qquad\qquad
  \subfigure[$N(C_2)$]{
    \begin{tikzpicture}
      \arrangement
      \chambers
      \begin{scope}[every path/.style={opacity=0.3, top color=blue, bottom color=white}]
        \fill (C0) -- (C1) -- (C3) -- (C2);
        \fill[shading angle=20] (C3) -- (C1) -- (C4) -- (C6);
      \end{scope}
      \begin{scope}[salvettiedges]
        \draw (C2) -> (C0);
        \draw (C3) -> (C1);
        \draw (C6) -> (C4);
      \end{scope}
      \node (C2) at (2,3.3) {$C_2$};
    \end{tikzpicture}
  }
  
  \bigskip
  
  \subfigure[$N(C_3)$]{
    \begin{tikzpicture}
      \arrangement
      \chambers
      \begin{scope}[every path/.style={opacity=0.3, right color=blue, left color=white}]
        \fill (C0) -- (C1) -- (C3) -- (C2);
      \end{scope}
      \node (C3) at (3.15,2) {$C_3$};
    \end{tikzpicture}
  }
  \qquad\qquad
  \subfigure[$N(C_4)$]{
    \begin{tikzpicture}
      \arrangement
      \chambers
      \begin{scope}[every path/.style={opacity=0.3, bottom color=blue, top color=white}]
        \fill[shading angle=50] (C5) -- (C2) -- (C3) -- (C6);
        \fill[shading angle=20] (C3) -- (C1) -- (C4) -- (C6);
      \end{scope}
      \begin{scope}[salvettiedges]
        \draw (C4) -> (C1);
        \draw (C6) -> (C3);
        \draw (C5) -> (C2);
      \end{scope}
      \node (C4) at (4.35,0.25) {$C_4$};
    \end{tikzpicture}
  }
  
  \bigskip
  
  \subfigure[$N(C_5)$]{
    \begin{tikzpicture}
      \arrangement
      \chambers
      \begin{scope}[every path/.style={opacity=0.3, top color=blue, bottom color=white}]
        \fill[shading angle=-40] (C5) -- (C2) -- (C3) -- (C6);
        \fill[shading angle=-50] (C3) -- (C1) -- (C4) -- (C6);
      \end{scope}
      \node (C5) at (4.25,3.75) {$C_5$};
    \end{tikzpicture}
  }
  \qquad\qquad
  \subfigure[$N(C_6)$ is empty]{
    \begin{tikzpicture}
      \arrangement
      \chambers
      \node (C6) at (4.3,2) {$C_6$};
    \end{tikzpicture}
  }
  
  \caption{A non-central arrangement of three lines in the plane, with a linear extension of $\leq_{C_0}$.
  Here $N(C_5)$ and $N(C_6)$ do not admit acyclic matchings with one critical cell.}
  \label{fig:non-valid}
\end{figure}

\begin{figure}
  \subfigure[Total order of the chambers]{
  \begin{tikzpicture}
    \arrangement
    \node (C0) at (0.5,2) {$C_0$};
    \node (C1) at (2,1) {$C_1$};
    \node (C2) at (2,3) {$C_2$};
    \node (C3) at (2.9,2) {$C_3$};
    \node (C4) at (4.1,0.4) {$C_5$};
    \node (C5) at (4.1,3.6) {$C_6$};
    \node (C6) at (4.3,2) {$C_4$};
  \end{tikzpicture}
  }
  \qquad\qquad
  \subfigure[$N(C_0)$]{
    \begin{tikzpicture}[salvettivertices]
      \arrangement
      \chambers
        \node[circle,inner sep=2pt,fill=white] (c0) at (0.15,2) {$C_0$};
      \begin{scope}[every path/.style={opacity=0.3, left color=blue, right color=white}]
        \fill (C0) -- (C1) -- (C3) -- (C2);
        \fill (C2) -- (C3) -- (C6) -- (C5);
        \fill (C1) -- (C3) -- (C6) -- (C4);
      \end{scope}
      \foreach \i in {0,...,6}
        \node (P\i) at (C\i) {};
      \begin{scope}[salvettiedges]
        \draw (P0) -> (P1);
        \draw (P0) -> (P2);
        \draw (P1) -> (P3);
        \draw (P2) -> (P3);
        \draw (P1) -> (P4);
        \draw (P2) -> (P5);
        \draw (P3) -> (P6);
        \draw (P4) -> (P6);
        \draw (P5) -> (P6);
      \end{scope}
    \end{tikzpicture}
  }
  
  \bigskip
  
  \subfigure[$N(C_1)$]{
    \begin{tikzpicture}
      \arrangement
      \chambers
      \begin{scope}[every path/.style={opacity=0.3, bottom color=blue, top color=white}]
        \fill (C0) -- (C1) -- (C3) -- (C2);
        \fill[shading angle=-20] (C2) -- (C3) -- (C6) -- (C5);
      \end{scope}
      \begin{scope}[salvettiedges]
        \draw (C1) -> (C0);
        \draw (C3) -> (C2);
        \draw (C6) -> (C5);
      \end{scope}
      \begin{scope}[XC]
        \draw (0,1.25) -- (5,3.5);
        \node at (0.5,1) {$X_{C_1}$};
      \end{scope}
       \node (C1) at (2,0.7) {$C_1$};
    \end{tikzpicture}
  }
  \qquad\qquad
  \subfigure[$N(C_2)$]{
    \begin{tikzpicture}
      \arrangement
      \chambers
      \begin{scope}[every path/.style={opacity=0.3, top color=blue, bottom color=white}]
        \fill (C0) -- (C1) -- (C3) -- (C2);
        \fill[shading angle=20] (C3) -- (C1) -- (C4) -- (C6);
      \end{scope}
      \begin{scope}[salvettiedges]
        \draw (C2) -> (C0);
        \draw (C3) -> (C1);
        \draw (C6) -> (C4);
      \end{scope}
      \begin{scope}[XC]
        \draw (0,2.75) -- (5,0.5);
        \node at (0.5,2.9) {$X_{C_2}$};
      \end{scope}
      \node (C2) at (2,3.3) {$C_2$};
    \end{tikzpicture}
  }
  
  \bigskip
  
  \subfigure[$N(C_3)$]{
    \begin{tikzpicture}
      \arrangement
      \chambers
      \begin{scope}[every path/.style={opacity=0.3, right color=blue, left color=white}]
        \fill (C0) -- (C1) -- (C3) -- (C2);
      \end{scope}
      \filldraw[red] (1.667,2) circle (2pt);
      \begin{scope}[XC]
        \node at (1.6,2.4) {$X_{C_3}$};
      \end{scope}
      \node (C3) at (3.15,2) {$C_3$};
    \end{tikzpicture}
  }
  \qquad\qquad
  \subfigure[$N(C_4)$]{
    \begin{tikzpicture}
      \arrangement
      \chambers
      \begin{scope}[every path/.style={opacity=0.3, right color=blue, left color=white}]
        \fill[shading angle=53] (C5) -- (C2) -- (C3) -- (C6);
        \fill[shading angle=130] (C3) -- (C1) -- (C4) -- (C6);
      \end{scope}
      \begin{scope}[salvettiedges]
        \draw (C4) -> (C1);
        \draw (C6) -> (C3);
        \draw (C5) -> (C2);
      \end{scope}
      \begin{scope}[XC]
        \draw (3.5,0) -- (3.5,4);
        \node at (3.1,3.7) {$X_{C_4}$};
      \end{scope}
      \node (C4) at (4.6,2) {$C_4$};
    \end{tikzpicture}
  }
  
  \bigskip
  
  \subfigure[$N(C_5)$]{
    \begin{tikzpicture}
      \arrangement
      \chambers
      \begin{scope}[every path/.style={opacity=0.3, bottom color=blue, top color=white}]
        \fill[shading angle=55] (C3) -- (C1) -- (C4) -- (C6);
      \end{scope}
      \filldraw[red] (3.5,1.175) circle (2pt);
      \begin{scope}[XC]
        \node at (3,1.1) {$X_{C_5}$};
      \end{scope}
      \node (C5) at (4.35,0.25) {$C_5$};
    \end{tikzpicture}
  }
  \qquad\qquad
  \subfigure[$N(C_6)$]{
    \begin{tikzpicture}
      \arrangement
      \chambers
      \begin{scope}[every path/.style={opacity=0.3, top color=blue, bottom color=white}]
        \fill[shading angle=-52] (C5) -- (C2) -- (C3) -- (C6);
      \end{scope}
      \filldraw[red] (3.5,2.825) circle (2pt);
      \begin{scope}[XC]
        \node at (3,2.95) {$X_{C_6}$};
      \end{scope}
      \node (C6) at (4.3,3.75) {$C_6$};
    \end{tikzpicture}
  }
  
  \caption{A non-central arrangement of three lines in the plane, with a valid order of the chambers.
For every chamber $C$ except $C_0$, the generator $X_C$ of $\J(C)$ is highlighted.}
  \label{fig:valid}
\end{figure}
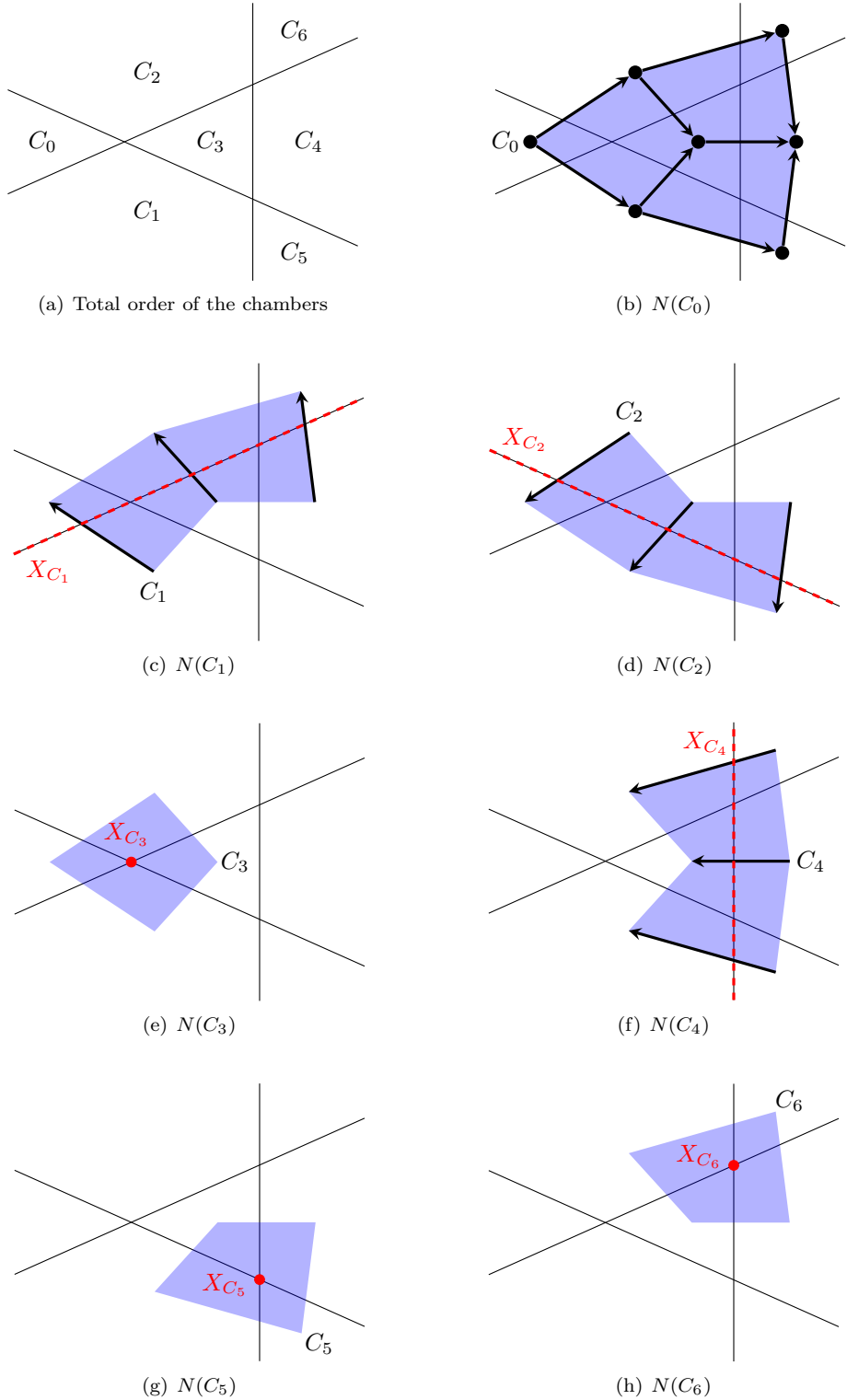
  \section{Construction of the acyclic matching}
\label{sec:matching}

Throughout this section we assume that we have an arrangement $\A$ together with a valid order $\dashv$ of $\C$ (as in Definition \ref{def:valid-order}).
Using the decomposition
\[ \sal(\A) = \bigsqcup_{C \in \C} N(C) \]
of Section \ref{sec:decomposition} (induced by the valid order $\dashv$), we are going to construct a proper acyclic matching on $\sal(A)$ with critical cells in bijection with the chambers.
More precisely, we are going to construct an acyclic matching on every $N(C)$ with exactly one critical cell, and then attach these matchings together using the Patchwork Theorem (Theorem \ref{teo:patchwork}).
This strategy is the same as the one employed in \cite{delucchi2008shelling}, but our proofs are different since we deal with affine and possibly infinite arrangements.

\begin{lemma}
  \label{lemma:NC}
  Suppose that $\dashv$ is a valid order of $\C$, in the sense of Definition \ref{def:valid-order}. Then 
  \begin{equation*}
    N(C) = \{ \cell{D,F} \in S(C) \mid F \subseteq X_C\}.
  \end{equation*}
\end{lemma}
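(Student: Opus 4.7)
My plan is to prove the set equality by unwinding both sides into the same combinatorial condition. The central observation I want to isolate first is that two subcomplexes $S(C)$ and $S(C')$ share a cell $\langle D, F\rangle$ precisely when $C$ and $C'$ lie in the same chamber of the restriction $\A_{|F|}$. Indeed, $\langle D,F\rangle \in S(C) \cap S(C')$ means $D = C.F = C'.F$, which is equivalent to saying that no hyperplane of $\supp(|F|) = \supp(F)$ separates $C$ from $C'$.

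From this, the negation of "$\langle D,F\rangle \in S(C')$" reads $\supp(|F|) \cap s(C,C') \neq \emptyset$. Hence for $\langle D,F\rangle \in S(C)$ to belong to $N(C)$ is equivalent to $\supp(|F|) \cap s(C,C') \neq \emptyset$ for every $C' \dashv C$. But this is precisely the defining condition for $|F|$ to lie in $\J(C)$.

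Now I invoke the validity hypothesis: $\J(C)$ is the principal upper ideal of $\L$ generated by $X_C$. Since $\L$ is ordered by reverse inclusion, this means $\J(C) = \{Y \in \L : Y \subseteq X_C\}$, so $|F| \in \J(C)$ iff $|F| \subseteq X_C$, and (because $F$ spans $|F|$ and $X_C$ is an affine subspace) this is the same as $F \subseteq X_C$. Stringing the equivalences together yields the lemma.

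I do not foresee a serious obstacle: the statement is essentially the content of the definition of $N(C)$ once one recognises that the condition "no $C' \dashv C$ sees the cell in its $S(C')$" reformulates as the defining condition of $\J(C)$. The only delicate point is being attentive to the reverse-inclusion convention on $\L$, so that "the principal upper ideal generated by $X_C$" is read as the set of flats contained in $X_C$ rather than the set of flats containing it.
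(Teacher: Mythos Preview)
Your proof is correct and follows essentially the same approach as the paper's: both arguments hinge on the observation that, for $\langle D,F\rangle \in S(C)$, membership in $S(C')$ is equivalent to $\supp(F)\cap s(C,C')=\emptyset$, and hence membership in $N(C)$ amounts to $|F|\in\J(C)$, which by validity and the reverse-inclusion convention on $\L$ means $F\subseteq X_C$. The only cosmetic difference is that you present this as a single chain of equivalences, whereas the paper proves the two inclusions separately (one by contradiction).
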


\begin{proof}
  To prove the inclusion $\subseteq$, assume by contradiction that there exists some cell $\cell{D,F} \in N(C)$ with $F \nsubseteq X_C$.
  By minimality of $X_C$ in $\J(C)$, we have that $|F| \notin \J(C)$.
  This means that there exists a chamber $C' \dashv C$ such that $\supp(F) \cap s(C,C') = \emptyset$. Then $C$ and $C'$ are contained in the same chamber of $\A_{|F|}$, which implies $C'.F=C.F$.
  By definition of $S(C)$, we have that $C.F=D$.
  Then $C'.F=D$, so $\cell{D,F} \in S(C')$.
  This is a contradiction, since $\cell{D,F} \in N(C)$ and $C' \dashv C$.
  
  For the opposite inclusion, consider a cell $\cell{D,F} \in S(C)$ with $F \subseteq X_C$.
  Then $|F| \in \J(C)$, so for every chamber $C' \dashv C$ there exists an hyperplane in $\supp(F) \cap s(C,C')$.
  By the same argument as before we can deduce that $D=C.F \neq C'.F$ for all $C' \dashv C$, which means that $\cell{D,F} \notin S(C')$ for all $C' \dashv C$.
  Therefore $\cell{D,F} \in N(C)$.
\end{proof}

Recall that, for a chamber $D \in \C$ and a face $F \succeq D$, we denote by $D^F$ the chamber opposite to $D$ with respect to $F$.
For every chamber $C \in \C$, consider the map
\begin{equation*}
  \tilde \eta_C \colon S(C) \to \mathcal{C}
\end{equation*}
that sends a cell $\cell{D,F}$ to $D^F$.

\begin{lemma}
  The map $\tilde \eta_C\colon S(C) \to (\C, \leq_C)$ is order-preserving.
  \label{lemma:eta-order-preserving}
\end{lemma}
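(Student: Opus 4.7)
The plan is to translate the partial order $\leq_C$ into an inclusion of separating sets and verify it directly. Since $D' \leq_C D$ is equivalent to $s(C,D') \subseteq s(C,D)$, the first task is to find a tractable formula for $s(C, D^F)$ when $\langle D,F\rangle \in S(C)$.

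The key observation is that $D^F$ is obtained from $D$ by a local reflection across $|F|$, so $s(D, D^F) = \supp(F)$: hyperplanes in $\supp(F)$ separate the two chambers, while any other hyperplane $H$ has $F$ (and hence both $D$ and $D^F$) in one of its two closed halfspaces. Moreover, $\langle D, F\rangle \in S(C)$ means $D = C.F$, so $C$ and $D$ lie in the same chamber of $\A_{|F|}$, whence $s(C, D) \cap \supp(F) = \emptyset$. Combining the two facts via the symmetric difference identity yields the key formula
\[ s(C, D^F) \,=\, s(C, D) \,\sqcup\, \supp(F). \]

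Given now $\langle D_1, F_1\rangle \leq \langle D_2, F_2\rangle$ in $S(C)$ (so $F_1 \preceq F_2$ and $D_i = C.F_i$), it suffices to show $s(C, D_1) \cup \supp(F_1) \subseteq s(C, D_2) \cup \supp(F_2)$. The inclusion $\supp(F_1) \subseteq \supp(F_2)$ is immediate from $F_2 \subseteq F_1$. For $s(C, D_1) \subseteq s(C, D_2) \cup \supp(F_2)$, I would fix $H \in s(C, D_1)$, note that $H \notin \supp(F_1)$, and deduce that the relative interior of $F_1$ sits in the open halfspace of $H$ opposite to $C$. If $H \in \supp(F_2)$ we are done; otherwise $F_2 \subseteq F_1$ has relative interior disjoint from $H$, hence on the same side as $F_1$, which forces $D_2 = C.F_2$ onto that side and places $H$ in $s(C, D_2)$.

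The only delicate step is this last case distinction, which rests on the standard fact that the relative interior of a face of $\A$ is either contained in a given hyperplane or disjoint from it. Apart from that, the argument is pure bookkeeping of separating sets; it uses nothing about the valid-order hypothesis and works for an arbitrary base chamber $C \in \C$.
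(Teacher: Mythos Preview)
Your proposal is correct and follows essentially the same approach as the paper: both derive the key identity $s(C,D^F)=s(C,D)\cup\supp(F)$ for $\langle D,F\rangle\in S(C)$ and then verify the inclusion of separating sets. The only cosmetic difference is that the paper packages the remaining inclusion via the triangle inequality $s(C,D_1)\subseteq s(C,D_2)\cup s(D_2,D_1)$ together with $s(D_2,D_1)\subseteq\supp(F_2)$, whereas you argue hyperplane-by-hyperplane using the position of the relative interiors of $F_1$ and $F_2$; these are two phrasings of the same computation.
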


\begin{proof}
  Let $\cell{D,F}, \cell{D',F'} \in S(C)$, and suppose that $\cell{D',F'} \leq \cell{D,F}$ (see Figure \ref{Fig:lemma4.2}).
  Then $F' \preceq F$ and therefore $\supp(F') \subseteq \supp(F)$.
  Call $E=D^F$ and $E'=D'^{F'}$.
  By definition of $S(C)$, we have that $s(C,E)=s(C,D) \cup \supp(F)$ and $s(C,E')=s(C,D') \cup \supp(F')$.
  In addition, $F' \preceq F$ implies that $s(D,D') \subseteq \supp(F) \setminus \supp(F')$.
  Since $s(C,D') \subseteq s(C,D) \cup s(D,D')$, we conclude that
  \begin{align*}
  s(C,E') &= s(C,D') \cup \supp(F') \subseteq s(C,D) \cup s(D,D') \cup \supp(F') \\
   &\subseteq s(C,D) \cup \supp(F) = s(C,E).
  \end{align*}
  Therefore $E' \leq_C E$.
\end{proof}

\begin{figure}
 
  \begin{tikzpicture}
    \draw (0,0) -- (10,5);
    \draw (0,4.5) -- (10,2);
    \draw (5,5) -- (7.5,0);
    \draw (2,0) -- (3,5);
    
   \node[circle,inner sep=2pt,fill=black] (f) at (6,3) {};
   \node (F) at (5.9,2.65) {$F$};
   \node (C) at (1,2.5) {$C$};
   \node (D) at (3.7,2.7) {$D$};
   \node (E) at (8.5,3.2) {$E$};
   \node[text=blue] (D') at (4,4.5) {$D'$};
   \node[text=blue] (F') at (5.8,4.1) {$F'$};
   \node[text=blue] (E') at (7,4.5) {$E'$};
   
   \draw[draw=blue, thick] (5,5) -- (6,3);
  \end{tikzpicture}
  
  \caption{Proof of Lemma \ref{lemma:eta-order-preserving}.}
  \label{Fig:lemma4.2}
\end{figure}

Consider the restriction $\eta_C = \tilde \eta_C |_{N(C)} \colon N(C) \to \C$.
The matching on $N(C)$ will be obtained as a union of acyclic matchings on each fiber $\eta_C^{-1}(E)$ of $\eta_C$.
Lemma \ref{lemma:eta-order-preserving}, together with the Patchwork Theorem, will ensure that the matching on $N(C)$ is acyclic.
We now fix two chambers $C$ and $E$, and study the fiber $\eta_{C}^{-1}(E)$.

\begin{lemma}
  Let $\dashv$ be a valid order of $\C$, and let $C,E$ be two chambers.
  A cell $\cell{D, F} \in \sal(\A)$ is in the fiber $\eta_C^{-1}(E)$ if and only if $D=E^F$, $F \subseteq X_C$, and $\supp(F) \subseteq s(C,E)$.
  \label{lemma:fiber}
\end{lemma}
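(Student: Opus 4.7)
My plan is to unfold the three conditions that define membership in $\eta_C^{-1}(E)$ and show that they repackage into the three conditions in the statement. The main tool is the involutive nature of the ``opposite chamber'' operation and a symmetric-difference identity for separating sets.

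First I would note that $\cell{D,F} \in \eta_C^{-1}(E)$ means precisely three things: (i) $\cell{D,F} \in S(C)$, i.e.\ $D = C.F$; (ii) $F \subseteq X_C$, which by Lemma \ref{lemma:NC} combined with (i) characterizes membership in $N(C)$; and (iii) $\tilde\eta_C(\cell{D,F}) = E$, i.e.\ $D^F = E$. Since the operation of taking the chamber opposite with respect to $F$ is an involution on the chambers containing $F$, condition (iii) is equivalent to $D = E^F$. Thus conditions (ii) and (iii) already match two of the three conditions in the statement, and it remains to show that, given $D = E^F$, condition (i) is equivalent to $\supp(F) \subseteq s(C,E)$.

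The key observation is that $D = E^F$ implies $s(D,E) = \supp(F)$: by definition of the opposite chamber, $D$ and $E$ both contain $F$ and lie on opposite sides of every hyperplane of $\supp(F)$, while agreeing on the side of all other hyperplanes. Using the standard identity $s(C,E) = s(C,D)\, \triangle\, s(D,E)$ we obtain
\[
s(C,E) = s(C,D) \,\triangle\, \supp(F).
\]
From this identity it follows immediately that $\supp(F) \subseteq s(C,E)$ if and only if $\supp(F) \cap s(C,D) = \emptyset$. On the other hand, because $F \preceq D$ holds automatically (since $\cell{D,F}$ is a Salvetti cell), the condition $D = C.F$ is by definition equivalent to $\pi_{|F|}(C) = \pi_{|F|}(D)$, i.e.\ to $s(C,D) \cap \supp(F) = \emptyset$. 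Chaining these equivalences gives exactly what we want.

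Combining everything, $\cell{D,F} \in \eta_C^{-1}(E)$ if and only if $D = E^F$, $F \subseteq X_C$, and $\supp(F) \subseteq s(C,E)$, which is the statement of the lemma. There is no real obstacle here: the argument is a definition chase, and the only substantive step is the symmetric-difference identity relating $s(C,E)$, $s(C,D)$, and $\supp(F)$ via the involution $D \mapsto D^F$.
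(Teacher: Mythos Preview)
Your argument is correct and follows essentially the same route as the paper: both proofs use that $D=E^F$ gives $s(D,E)=\supp(F)$ and then relate $s(C,D)$, $s(C,E)$, and $\supp(F)$ to translate between $D=C.F$ and $\supp(F)\subseteq s(C,E)$; you simply package this via the symmetric-difference identity and handle both directions at once, whereas the paper writes out the two inclusions separately. One small notational slip: in the paper's convention $F\preceq G$ means $F\supseteq G$, so the condition that $F$ is a face of $D$ should be written $D\preceq F$, not $F\preceq D$.
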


\begin{proof}
  Suppose that $\cell{D,F} \in \eta_C^{-1}(E)$.
  In particular, $\cell{D,F} \in N(C)$, thus by Lemma \ref{lemma:NC} we have that $F \subseteq X_C$.
  By definition of $\eta_C$, $D^F=E$ and so $E^F=D$.
  Finally, we have $\supp(F) \subseteq s(D,E)$ by definition of $\eta_C$, and $\supp(F) \cap s(C,D) = \emptyset$ by definition of $S(C)$, so $\supp(F) \subseteq s(D,E) \setminus s(C,D) \subseteq s(C,E)$.
  
  We want now to prove that a cell $\cell{D,F}$ that satisfies the given conditions is in the fiber $\eta_C^{-1}(E)$.
  Since $D$ is opposite to $E$ with respect to $F$, we deduce that $\supp(F) \subseteq s(D,E)$.
  Then, using the hypothesis $\supp(F) \subseteq s(C,E)$, we obtain $\supp(F) \cap s(C,D) = \emptyset$.
  This means that $C.F = D$, i.e.\ $\cell{D,F} \in S(C)$.
  By Lemma \ref{lemma:NC}, we conclude that $\cell{D,F} \in N(C)$.
  The fact that $\eta_C(\cell{D,F}) = E$ follows directly from the definition of $\eta_C$.
\end{proof}

A cell $\cell{D,F}$ in the fiber $\eta_C^{-1}(E)$ is determined by $F$, because $D=E^F$.
Thus we immediately have the following corollary.

\begin{provedcorollary}\label{corollary:visible-unrestricted}
  The fiber $\eta_{C}^{-1}(E)$ is in order-preserving (and rank-preserving) bijection with the set of faces $F \succeq E$ such that $F \subseteq X_C$ and $\supp(F) \subseteq s(C,E)$.
  In particular, if $\eta_C^{-1}(E)$ is non-empty, then $\supp(X_C) \subseteq s(C,E)$.
\end{provedcorollary}

Assume from now on that the fiber $\eta_C^{-1}(E)$ is non-empty.
The above corollary can be restated as follows, restricting to the flat $X_C$.

\begin{corollary}\label{corollary:visible}
  Suppose that the fiber $\eta_C^{-1}(E)$ is non-empty.
  Then $C' = C \cap X_C$ and $E' = E \cap X_C$ are chambers of the (contraction) arrangement $\A^{X_C}$, and $\eta_{C}^{-1}(E)$ is in order-preserving bijection with the set of faces $F \succeq E'$ such that $\supp(F) \subseteq s(C',E')$ in $\A^{X_C}$.
\end{corollary}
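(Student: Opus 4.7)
The plan is to transport the description of $\eta_C^{-1}(E)$ furnished by Corollary~\ref{corollary:visible-unrestricted} from the arrangement $\A$ to the contracted arrangement $\A^{X_C}$. Three things need to be verified: that $C' = C \cap X_C$ is a chamber of $\A^{X_C}$, that $E' = E \cap X_C$ is likewise a chamber, and that the support and separation conditions transfer correctly under contraction.

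First, since $\dashv$ is valid, $X_C = |F_C|$ for some face $F_C$ of $C$ (Definition~\ref{def:valid-order}). Then $F_C$ is a face of $\A$ contained in $X_C$ of dimension $\dim X_C$, hence a chamber of $\A^{X_C}$ by definition. A short convexity argument---using that $C$ lies on a single closed side of every hyperplane $H \in \A \setminus \A_{X_C}$---shows that the relative interior of $C \cap X_C$ in $X_C$ does not meet any hyperplane of $\A^{X_C}$ and hence coincides with the relative interior of $F_C$; passing to closures gives $C \cap X_C = F_C$.

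The main obstacle is the second step: showing that $E \cap X_C$ is full-dimensional in $X_C$, so that it is a chamber of $\A^{X_C}$. The hypothesis provides a face $F \subseteq E$ with $F \subseteq X_C$ and $\supp(F) \subseteq s(C,E)$. If $|F| = X_C$ we are done; otherwise, I would work locally around a point $p$ in the relative interior of $F$, where $\A$ looks like the central arrangement $\supp(F)$ at $p$. One constructs the desired face $F'$ of $\A$ with $|F'| = X_C$ and $F' \subseteq E$ by taking the sign vector equal to zero on $\A_{X_C}$ and matching that of $E$ on the remaining hyperplanes; its realisability hinges on the separation condition $\A_{X_C} \subseteq s(C,E)$ (Corollary~\ref{corollary:visible-unrestricted}) combined with the local structure of $\A$ around $F$. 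The convexity argument of Step~1 then yields $E \cap X_C = F'$, a chamber of $\A^{X_C}$.

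With $C'$ and $E'$ identified as chambers of $\A^{X_C}$, the bijection is immediate. The faces of $\A$ contained in $X_C$ are exactly the faces of $\A^{X_C}$, so for such an $F$ the conditions $F \succeq E$ and $F \subseteq X_C$ together become $F \subseteq E' = E \cap X_C$, i.e.\ $F \succeq E'$ in $\A^{X_C}$. Every $H \in \A_{X_C}$ belongs to both $\supp(F)$ (because $|F| \subseteq X_C$) and $s(C,E)$ (by Corollary~\ref{corollary:visible-unrestricted}), so $\supp(F) \subseteq s(C,E)$ reduces to its restriction to $\A \setminus \A_{X_C}$; under the canonical bijection $H \mapsto H \cap X_C$ between $\A \setminus \A_{X_C}$ and the hyperplanes of $\A^{X_C}$, this is precisely the condition that $\supp(F) \subseteq s(C',E')$ in $\A^{X_C}$. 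Order preservation is automatic, since the face poset of $\A^{X_C}$ is inherited from $\F(\A)$.
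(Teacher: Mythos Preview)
Your outline is sound, and Steps~1 and~3 are essentially the paper's argument (with more detail in Step~1, which the paper compresses to the single line $C' = C \cap X_C = F_C$). One small slip in Step~3: the map $H \mapsto H \cap X_C$ from $\A \setminus \A_{X_C}$ onto the hyperplanes of $\A^{X_C}$ need not be injective, so it is a surjection rather than a bijection. This does not damage the argument, since membership of $H \cap X_C$ in $\supp'(F)$ or in $s'(C',E')$ depends only on the intersection $H \cap X_C$ and not on the particular preimage $H$; the paper handles this by simply writing out $\supp'(F)$ and $s'(C',E')$ as images and checking the two containments directly.

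The genuine difference is Step~2. The paper avoids your sign-vector construction entirely by passing through the chamber $D = C.F$ rather than working with $E$ directly. Setting $F_C' = F_C.F$ in $\A^{X_C}$ (a chamber of $\A^{X_C}$ containing $F$), one has $C.F_C' = C.F = D$ because $F_C' \preceq F$; hence $D \cap X_C = F_C'$ is a chamber of $\A^{X_C}$, and then $E'$ is a chamber because $E$ and $D$ are opposite with respect to $F \subseteq X_C$. This is shorter and uses only the face-composition operation already set up in Section~\ref{sec:hyperplane-arrangements}. Your approach does work, but the ``realisability'' step you leave to a hint actually needs the two ingredients you do not name explicitly: that $\supp(F) \subseteq s(C,E)$ makes $\pi_{|F|}(E)$ the chamber \emph{opposite} to $\pi_{|F|}(C)$ in the local central arrangement $\supp(F)$, and that $\pi_{|F|}(C)$ inherits from $F_C$ a face with support exactly $\A_{X_C}$, so that by central symmetry $\pi_{|F|}(E)$ does too. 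Once these are spelled out your argument goes through; the paper's route simply sidesteps the local analysis.
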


\begin{proof}
  By Definition \ref{def:valid-order}, $X_C = |F_C|$ for some face $F_C$ of $C$. Then $C' = C \cap X_C = F_C$ is a chamber of $\A^{X_C}$.
  
  Consider now any cell $\cell{D,F} \in \eta_{C}^{-1}(E)$, and let $D'=D \cap X_C$.
  If we prove that $D'$ is a chamber of $\A^{X_C}$, then the same is true for $E'$, since they are opposite with respect to $F$ and $F \subseteq X_C$ (by Lemma \ref{lemma:NC}).
  Let $F'_C=F_C.F$ in the arrangement $\A^{X_C}$ (so $F'_C$ is a chamber of $\A^{X_C}$), and consider the chamber $\tilde D = C.F'_C$ in $\A$.
  Then $\tilde D = C.F = D$ (the first equality holds because $F'_C \preceq F$, and the second equality because $D \in S(C)$).
  Therefore $D' = D \cap X_C = \tilde D \cap X_C = F'_C$ is a chamber of $\A^{X_C}$.
  
  The second part is mostly a rewriting of Corollary \ref{corollary:visible-unrestricted}, but some care should be taken since we are passing from the arrangement $\A$ to the arrangement $\A^{X_C}$.
  To avoid confusion, in $\A^{X_C}$ write $\supp'$ and $s'$ in place of $\supp$ and $s$.
  Given a face $F \subseteq X_C$, we need to prove that $\supp(F) \subseteq s(C,E)$ in $\A$ if and only if $\supp'(F) \subseteq s'(C',E')$ in $\A^{X_C}$.
  This is true because
  \begin{align*}
    \supp'(F) &= \{ H \cap X_C \mid H \in \supp(F) \text{ and } H \nsupseteq X_C \}; \\
    s'(C',E') &= \{ H \cap X_C \mid H \in s(C,E) \text{ and } H \nsupseteq X_C \}. \qedhere
  \end{align*}
\end{proof}

Constructing an acyclic matching on $\eta_C^{-1}(E)$ is then the same as constructing an acyclic matching on the set of faces of $E'$ given by Corollary \ref{corollary:visible}.
We start by considering the special case $E'=C'$.

\begin{lemma}
  \label{lemma:special-fiber}
  Suppose that the fiber $\eta_C^{-1}(E)$ is non-empty.
  Then $E' = C'$ if and only if $E$ is the chamber opposite to $C$ with respect to $X_C$.
  In this case, $\eta_C^{-1}(E)$ contains the single cell $\cell{C,F_C}$.
\end{lemma}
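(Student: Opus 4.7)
My plan is to prove the biconditional by comparing $s(C,E)$ with $\supp(X_C)$. Since $\eta_C^{-1}(E)$ is non-empty, Corollary \ref{corollary:visible-unrestricted} already yields $\supp(X_C) \subseteq s(C,E)$, so I only need to decide when equality holds — and equality is precisely the statement $E = C^{F_C}$. A preliminary observation is that each of the two conditions to be equated forces $F_C$ to be a face of $E$: if $E' = C'$ then $F_C = C' = E' \subseteq E$, while $E = C^{F_C}$ has $F_C$ as a face of the opposite chamber by construction. Throughout I may therefore assume that $F_C$ is a common face of $C$ and $E$.

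The core of the argument is a hyperplane-by-hyperplane analysis. For $H \notin \supp(X_C)$, I split into two subcases. If $H \cap X_C = \emptyset$, then $X_C$, and hence $F_C$, lies on one side of $H$, so $C$ and $E$ (both containing $F_C$ as a face) lie on that same side, which therefore does not separate them. If instead $H \cap X_C$ is a hyperplane of $\A^{X_C}$, then by the definition of the restriction, $H$ separates $C$ from $E$ in $\A$ if and only if $H \cap X_C$ separates $C' = F_C$ from $E'$ in $\A^{X_C}$. Combining the two subcases, $s(C,E) \setminus \supp(X_C)$ is in bijection with the hyperplanes of $\A^{X_C}$ separating $C'$ from $E'$, and so $s(C,E) = \supp(X_C)$ — equivalently $E = C^{F_C}$ — if and only if $C' = E'$.

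For the final assertion, assume $E = C^{F_C}$ and let $\cell{D,F} \in \eta_C^{-1}(E)$. Lemma \ref{lemma:fiber} gives $F \subseteq X_C$ and $\supp(F) \subseteq s(C,E) = \supp(F_C)$. The inclusion $F \subseteq X_C = |F_C|$ forces the reverse inclusion $\supp(F) \supseteq \supp(F_C)$, so both supports coincide and $|F| = X_C$. Since any chamber of $\A$ has at most one face with a prescribed affine span (such a face is a chamber of $\A^{X_C}$ in the closure of the given chamber, pinned down by the chamber's sign vector on the non-containing hyperplanes), the face $F$ of $E$ with $|F| = X_C$ must coincide with $F_C$, and then $D = E^F = (C^{F_C})^{F_C} = C$. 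The delicate part I expect is the middle paragraph: one must translate separation cleanly between $\A$ and $\A^{X_C}$ in both directions, and this is precisely what the preliminary reduction to ``$F_C$ is a common face of $C$ and $E$'' enables.
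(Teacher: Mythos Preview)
Your proof is correct and takes a genuinely different route from the paper's. The paper argues the hard direction ($E'=C' \Rightarrow E=C^{F_C}$) by picking an arbitrary cell $\cell{D,F}$ in the fiber and showing directly that $D=C$ and $F=F_C$; this yields the biconditional and the single-cell statement in one stroke. You instead characterize $E=C^{F_C}$ extrinsically as $s(C,E)=\supp(X_C)$, establish this via a hyperplane-by-hyperplane comparison with $s'(C',E')$ in $\A^{X_C}$, and only then treat the single-cell statement by a separate uniqueness-of-face argument. Your approach makes the r\^ole of Corollary~\ref{corollary:visible-unrestricted} (the inclusion $\supp(X_C)\subseteq s(C,E)$) more visible, while the paper's approach is more economical because it never needs to analyze individual hyperplanes.

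Two small remarks. First, the word ``bijection'' in your middle paragraph is slightly loose: the map $H\mapsto H\cap X_C$ from $\A\setminus\A_{X_C}$ to $\A^{X_C}$ need not be injective. What you actually use (and prove) is the equivalence $s(C,E)\setminus\supp(X_C)=\emptyset \Leftrightarrow s'(C',E')=\emptyset$, which does not require injectivity. Second, your ``preliminary reduction'' to $F_C\subseteq E$ is stronger than you realize: once $F_C\subseteq E$ you immediately get $F_C\subseteq E\cap X_C=E'$, and since $F_C=C'$ and $E'$ are both chambers of $\A^{X_C}$ this forces $C'=E'$. So under your standing assumption the second subcase is always comparing two empty sets; the hyperplane analysis is really only doing work in the direction $E'=C'\Rightarrow s(C,E)\subseteq\supp(X_C)$. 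This does not affect correctness, but it means your argument could be shortened.
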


\begin{proof}
  If $E$ is opposite to $C$ with respect to $X_C$, then clearly $E' = C'$.
  Conversely, suppose that $E' = C' = F_C$.
  Let $\cell{D,F}$ be any cell in $\eta_C^{-1}(E)$.
  As in the proof of Corollary \ref{corollary:visible}, we have that $D \cap X_C = F_C'$, where $F_C' = F_C.F$ in $\A^{X_C}$.
  Notice that $F \subseteq E \cap X_C = E' = F_C$, so $F_C' = F_C.F = F_C$.
  In other words, the chambers $C$, $D$ and $E$ all contain the face $F_C$.
  Since $F \subseteq F_C \subseteq C \cap D$, we have that $s(C,D) \subseteq \supp(F)$.
  But $D \in S(C)$ implies that $D = C.F$, i.e.\ $s(C,D) \cap \supp(F) = \emptyset$.
  Therefore $s(C,D) = \emptyset$, so $C=D$.
  Now, $E$ is the opposite of $D$ with respect to $F$, and $E \cap X_C = D \cap X_C = F_C$, so $F = F_C$.
  This means that $E$ is the opposite of $C$ with respect to $X_C$.
  The previous argument also shows that $\eta_C^{-1}(E)$ contains the single cell $\cell{C,F_C}$.
\end{proof}

In particular, for every chamber $C$ there is exactly one fiber $\eta_C^{-1}(E)$ for which $E'=C'$.
This fiber contains exactly one cell, which is going to be critical with respect to our matching.

Consider now the case $E' \neq C'$.
In view of Corollary \ref{corollary:visible}, we work with the restricted arrangement $\A^{X_C}$ in $X_C$.
Until Lemma \ref{lemma:unb-bounded}, all our notations (for example, $\supp(F)$ and $s(C',E')$) are intended with respect to the arrangement $\A^{X_C}$.
In what follows, we make use of the definitions and facts of Section \ref{sec:shellability}.

\begin{lemma}
  \label{lemma:visible-faces}
  Let $y_{C'}$ be a point in the interior of $C'$.
  The faces $F \succeq E'$ such that $\supp(F) \subseteq s(C',E')$ are exactly the faces of $E'$ that are visible from $y_{C'}$.
\end{lemma}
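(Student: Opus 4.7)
My plan is to prove the two inclusions separately. The easy direction assumes $\supp(F) \subseteq s(C',E')$ and deduces that $F$ is visible from $y_{C'}$: any facet $G$ of $E'$ containing $F$ lies on some wall $H \in \W_{E'}$ with $F \subseteq H$, so $H \in \supp(F) \subseteq s(C',E')$. Then $y_{C'}$ sits in the open half-space of $H$ opposite to $E'$, and any segment from $y_{C'}$ to a point of $G \subseteq H$ stays in that closed half-space and never meets the interior of $E'$. Every facet of $E'$ containing $F$ is thus visible, and hence so is $F$.

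For the reverse direction, suppose $F$ is visible from $y_{C'}$. Since facets of $E'$ containing $F$ correspond bijectively to walls in $\W_{E'} \cap \supp(F)$, and visibility of the facet lying on a wall $H$ is equivalent to $H \in s(C',E')$, we obtain $\W_{E'} \cap \supp(F) \subseteq s(C',E')$ immediately. The content of the lemma is to promote this to $\supp(F) \subseteq s(C',E')$, that is, to also control the hyperplanes of $\supp(F)$ that fail to be walls of $E'$.

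I would handle this by restricting to the central subarrangement formed by the hyperplanes in $\supp(F)$, and letting $\hat E$ and $\hat C$ denote the chambers of this subarrangement that contain $E'$ and $C'$. A local analysis at a point $p$ in the relative interior of $F$ gives the identity $\W_{\hat E} = \W_{E'} \cap \supp(F)$: no hyperplane outside $\supp(F)$ meets a small enough neighborhood of $p$, so near $p$ the chambers $E'$ and $\hat E$ coincide and their walls through $F$ agree. Combined with the inclusion from the previous paragraph, this shows that every wall of $\hat E$ separates $\hat E$ from $\hat C$ inside the subarrangement $\supp(F)$.

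The main obstacle is then the polyhedral fact that, in any central hyperplane arrangement, a chamber $\hat C$ separated from $\hat E$ by every wall of $\hat E$ must be the opposite chamber $-\hat E$, the reflection of $\hat E$ through the flat $|F|$. I would argue this by writing the closed chamber $-\hat E$ as the intersection of the closed half-spaces bounded by the walls of $\hat E$ (with signs opposite to $\hat E$): the hypothesis places $\hat C$ inside this intersection, and since two chambers of the same arrangement are either equal or have disjoint interiors, $\hat C = -\hat E$. Thus every hyperplane of $\supp(F)$ separates $\hat E$ from $\hat C$, hence also separates $E'$ from $C'$, giving $\supp(F) \subseteq s(C',E')$.
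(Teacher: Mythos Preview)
Your proof is correct and follows essentially the same approach as the paper's. The paper's proof also passes to the central subarrangement $\supp(F)$ and asserts that $\pi_{|F|}(C')$ and $\pi_{|F|}(E')$ are opposite chambers there, with $\mathcal{B} = \W_{E'}\cap\supp(F)$ being exactly the set of walls of $\pi_{|F|}(E')$; you simply spell out in detail the two facts the paper takes for granted, namely $\W_{\hat E} = \W_{E'}\cap\supp(F)$ and the polyhedral statement that a chamber lying on the far side of every wall of $\hat E$ must equal $-\hat E$.
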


\begin{proof}
  Suppose that $\supp(F) \subseteq s(C',E')$.
  In particular, for every facet $G \supseteq F$ of $E'$, the hyperplane $|G| \in \A^{X_C}$ separates $C'$ and $E'$ and so $G$ is visible from $y_{C'}$. Then $F$ is visible from $y_{C'}$.
  
  Conversely, suppose that $F$ is visible from $y_{C'}$.
  Denote by $\mathcal{B} \subseteq \supp(F)$ the set of hyperplanes $|G|$ where $G\supseteq F$ is a facet of $E'$.
  All the facets $G \supseteq F$ of $E'$ are visible from $y_{C'}$, so the hyperplanes $|G|$ separate $C'$ and $E'$.
  In other words, $\mathcal{B} \subseteq s(C',E')$.
  In the central arrangement $\A^{X_C}_{|F|} = \supp(F)$, the chambers $\pi_{|F|}(C')$ and $\pi_{|F|}(E')$ are therefore opposite to each other, and $\mathcal{B}$ is the set of their walls.
  Then every hyperplane in $\supp(F)$ separates $C'$ and $E'$.
\end{proof}

Fix an arbitrary point $y_{C'}$ in the interior of $C'$.
By the previous lemma, the faces $F$ given by Corollary \ref{corollary:visible} are exactly the faces of $E'$ that are visible from $y_{C'}$. See Figure \ref{fig:visible} for an example.

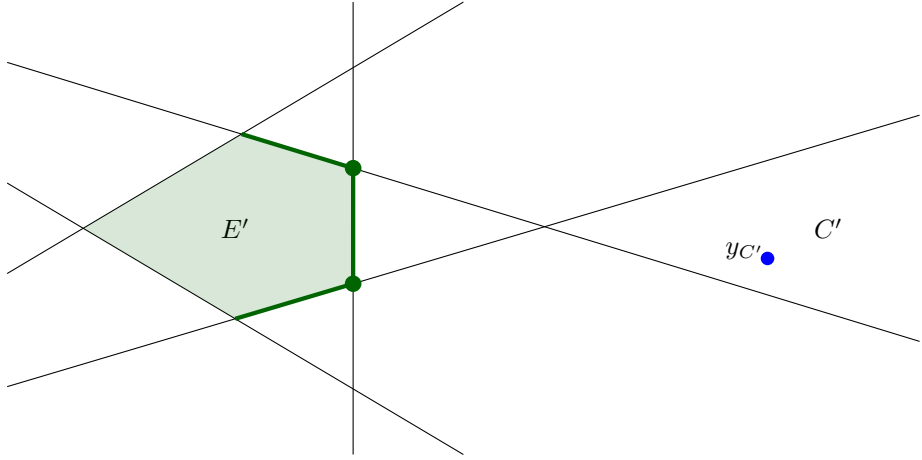
\begin{figure}
   \begin{tikzpicture}
    \draw[name path=L1] (0,0.9) -- (12,4.5);
    \draw[name path=L3] (0,5.2) -- (12,1.5);
    \draw[name path=L2] (4.55,0) -- (4.55,6);
    \draw[name path=L4] (0,2.4) -- (6,6);
    \draw[name path=L5] (0,3.6) -- (6,0);
    
    \path[name intersections={of=L1 and L2, by=E0}];
    \path[name intersections={of=L3 and L2, by=E1}];
    \path[name intersections={of=L1 and L5, by=E2}];
    \path[name intersections={of=L3 and L4, by=E3}];
    \path[name intersections={of=L4 and L5, by=E4}];

    \node (E) at (3,3) {$E'$};
    \node (C) at (10.8,3) {$C'$};
    
    \node at (E2) {};
    \node at (E3) {};
    \node at (E4) {};
    
    \begin{scope}[every node/.style={circle,inner sep=2.2pt,fill=darkgreen}]
        \node at (E0) {};
        \node at (E1) {};
    \end{scope}
    
    \node at (9.7,2.7) {$y_{C'}$};
    
    \begin{scope}[every node/.style={circle,inner sep=1.8pt,fill=blue}]
      \node at (10,2.6) {};
    \end{scope}

    \fill[opacity=0.15, color=darkgreen] (E0) -- (E1) -- (E3) -- (E4) -- (E2);
    
    \begin{scope}[every path/.style={draw=darkgreen,ultra thick, fill=darkgreen}]
     \draw (E0) -- (E1);
     \draw (E0) -- (E2);
     \draw (E1) -- (E3);
    \end{scope}

  \end{tikzpicture}
  
  \caption{The faces of $E'$ that are visible from a point $y_{C'}$ in the interior of $C'$.}
  \label{fig:visible}
\end{figure}

The idea now is that, if $E'$ is bounded, the boundary of $E'$ is shellable and we can use a shelling to construct an acyclic matching on the set of visible faces.
We first need to reduce to the case of a bounded chamber (i.e.\ a polytope).

\begin{lemma}\label{lemma:unb-bounded}
  There exists a finite set $\A'$ of hyperplanes in $X_C$, and a bounded chamber $\tilde E \subseteq E'$ of the hyperplane arrangement $\A' \cup \A^{X_C}$, such that the poset of faces of $\tilde{E}$ that are visible from $y_{C'}$ is isomorphic to the poset of faces of $E'$ that are visible from $y_{C'}$.
\end{lemma}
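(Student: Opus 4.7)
My plan is the following. I will take $K$ to be a bounded convex polytope in $X_C$ chosen large enough and in sufficiently generic position, let $\A'$ be its (finite) set of walls, and set $\tilde E = E' \cap K$. Because $\tilde E$ is cut out from $X_C$ by the half-spaces defining $E'$ (coming from walls in $\A^{X_C}$) together with those defining $K$ (walls in $\A'$), its interior is a connected component of $X_C \setminus \bigcup(\A'\cup \A^{X_C})$, so $\tilde E$ is indeed a bounded chamber of $\A'\cup \A^{X_C}$ contained in $E'$.

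The choice of $K$ will be guided by the observation that $s(C',E')$ is finite, since any segment from a point of $C'$ to a point of $E'$ is compact and the arrangement is locally finite. Hence, by Lemma \ref{lemma:visible-faces}, there are only finitely many visible faces of $E'$, and the set $S \subseteq \A^{X_C}$ of hyperplanes supporting visible facets of $E'$ is finite. I will pick $K$ so that (a) $y_{C'}\in \mathrm{int}(K)$; (b) every visible face $F$ of $E'$ meets $\mathrm{int}(K)$ in a set of dimension $\dim F$; (c) no wall of $K$ contains the affine hull of any visible face of $E'$. Such a $K$ exists by enlarging a polytope around $y_{C'}$ so as to contain a relative interior point of each of the finitely many visible faces, and then generically perturbing its walls.

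The desired isomorphism of posets will be given by $F \mapsto F \cap K$. The key input is that each facet of $\tilde E$ lies either on a wall of $E'$ (type I) or on a wall of $K$ (type II), and that no type II facet is visible from $y_{C'}$: indeed, since $y_{C'}\in \mathrm{int}(K)$, the point $y_{C'}$ lies on the same side of each wall of $K$ as $\tilde E$. Meanwhile, a type I facet is visible iff its supporting hyperplane lies in $s(C',E')$, i.e.\ iff the corresponding facet of $E'$ is visible. Conditions (b) and (c) then guarantee that the visible faces of $\tilde E$ are exactly the sets $F\cap K$ with $F$ a visible face of $E'$, and that this correspondence preserves and reflects containment (since the set of walls of $\tilde E$ containing $F\cap K$ coincides, via (c), with the set of walls of $E'$ containing $F$).

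The main obstacle lies in arranging conditions (a)--(c) simultaneously. Condition (b) is delicate for unbounded visible faces, where meeting $\mathrm{int}(K)$ in full dimension requires $K$ to be chosen large enough relative to the finitely many hyperplanes supporting such faces; condition (c) is a genericity constraint, and since the ``bad'' positions form a finite collection of proper affine conditions on the walls of $K$, a generic perturbation avoids them. Once $K$ has been fixed, the verification that $F \mapsto F \cap K$ is a poset isomorphism between the two sets of visible faces is essentially bookkeeping, using that both posets are described by the same combinatorial data, namely subsets of $S$ for which the corresponding intersection with $E'$ is non-empty.
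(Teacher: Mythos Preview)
Your proposal is correct and follows essentially the same strategy as the paper: cut $E'$ by a large bounded polytope $K$ containing $y_{C'}$ and a relative-interior point of every visible face, set $\tilde E=E'\cap K$, and observe that the new ``type~II'' facets (on walls of $K$) are never visible because $y_{C'}\in\mathrm{int}(K)$, while the ``type~I'' facets inherit their visibility from $E'$.

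The differences are only presentational. The paper takes $K$ to be an explicit axis-aligned box (so $\A'$ has exactly $2\dim X_C$ hyperplanes) and, instead of your map $F\mapsto F\cap K$, uses $F\mapsto\tilde F$ where $\tilde F$ is the unique face of $\tilde E$ with $q_F\in\tilde F\subseteq F$ for a chosen point $q_F$ in the relative interior of $F$. This allows the paper to sidestep your genericity condition~(c): it does not matter whether a wall of the box contains $|F|$, because the face $\tilde F$ is singled out by the marked point rather than by the set $F\cap K$. Conversely, your condition~(c) is what guarantees that $F\cap K$ is a single face of $\tilde E$ of the correct dimension, so that your simpler description of the map makes sense. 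Both routes yield the same poset isomorphism; yours is slightly cleaner conceptually, while the paper's is slightly more concrete and avoids the perturbation step.
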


\begin{proof}
  Let $X_C \cong \R^k$.
  Let $Q$ be a finite set of points which contains $y_{C'}$ and a point in the relative interior of each visible face of $E'$.
  For $i=1,\dots, k$, define $q_i \in \R$ as the minimum of all the $i$-th coordinates of the points in $Q$, and $q^i$ as the maximum. 
 
  Choose $\A'$ as the set of the $2k$ hyperplanes of the form $\{ x_i=q_i-1 \}$ and $\{ x_i=q^i+1 \}$, for $i=1,\dots, k$.
  Let $\tilde{E}$ be the chamber of $\A^{X_C} \cup \A'$ that contains $Q \setminus \{y_{C'}\}$.
  By construction, $\tilde{E}$ is bounded and is contained in $E'$.
  See Figure \ref{fig:Lemma} for an example.
  
  The walls of $E'$ and of $\tilde E$ are related as follows: $\W_{\tilde E} = \W_{E'} \cup \A''$ for some $\A'' \subseteq \A'$.
  The hyperplanes in $\W_{E'}$ separate $y_{C'}$ and $\tilde E$, whereas the hyperplanes in $\A''$ do not.
  This means that a facet $\tilde G$ of $\tilde E$ is visible if and only if $|\tilde G| \in \W_{E'}$.
  
  There is a natural order-preserving (and rank-preserving) injection $\varphi$ from the set $\mathcal{V}$ of the visible faces $F$ of $E'$ to the set of faces of $\tilde{E}$, which maps a face $F$ to the unique face $\tilde F$ of $\tilde E$ such that $F \cap Q \subseteq \tilde F \subseteq F$.
  We want to show that the image of $\varphi$ coincides with the set of visible faces of $\tilde E$.
  
  Consider a facet $\tilde G$ of $\tilde E$.
  Then $\tilde G$ is in the image of $\varphi$ if and only if $|\tilde G| \not\in \A''$, which happens if and only if $\tilde G$ is visible.

  Consider now a generic face $\tilde F$ of $\tilde E$.
  If $\tilde F = \varphi(F)$ for some $F \in \mathcal{V}$, then $Q \cap F \subseteq \tilde F$ and so $\tilde F$ is not contained in any hyperplane of $\A''$.
  Then all the facets $\tilde G \supseteq \tilde F$ of $\tilde E$ are visible, and so $\tilde F$ is visible.
  Conversely, if $\tilde F$ is not in the image of $\varphi$, then $\tilde F$ is contained in some hyperplane of $\A''$ and therefore also in some non-visible facet $\tilde G$. Then $\tilde F$ is not visible.
\end{proof}

\begin{figure}
   \begin{tikzpicture}
    \draw[name path=L1] (0,0.9) -- (12,4.5);
    \draw[name path=L3] (0,5.2) -- (12,1.5);
    \draw[name path=L2] (4.55,0) -- (5.55,6);
    
    \path[name intersections={of=L1 and L2, by=E0}];
    \path[name intersections={of=L3 and L2, by=E1}];

    \node (E') at (3,3) {$E'$};
    \node (C') at (11.4,3) {$C'$};
    \node at (9.7,2.7) {$y_{C'}$};

    \begin{scope}[every path/.style={dashed, thick}]
      \draw (0.7,0) -- (0.7,6);
      \draw (10.7,0) -- (10.7,6);
      \draw (0,0.64) -- (12,0.64);
      \draw (0,5) -- (12,5);
    \end{scope}

    \begin{scope}[every path/.style={draw=darkgreen, ultra thick, fill=darkgreen}]
      \draw (E0) -- (E1);
      \draw (E0) -- (0,0.9);
      \draw (E1) -- (0,5.2);
    \end{scope}
    
    \fill[opacity=0.15, color=darkgreen] (E0) -- (E1) -- (0,5.2)-- (0,0.9);
    
    \begin{scope}[every node/.style={circle,inner sep=1.8pt,fill=blue}]
        \node at (E0) {};
        \node at (E1) {};
        \node at (5.05,3) {};
        \node at (1.5,1.34) {};
        \node at (3,4.27) {};
        \node at (2,3.5) {};
        \node at (10,2.6) {};
    \end{scope}
  \end{tikzpicture}
  
  \caption{Construction of the bounded chamber $\tilde E \subseteq E'$ in the proof of Lemma \ref{lemma:unb-bounded}.
  The points of $Q$ are highlighted, and the hyperplanes of $\A'$ are dashed.}
  \label{fig:Lemma}
\end{figure}

We now show that the poset of visible faces of a polytope admits an acyclic matching such that no face is critical.
We will use this result on the polytope $\tilde E$, in order to obtain a matching on the fiber $\eta_C^{-1}(E)$.

\begin{theorem}\label{teo:fiber}
  Let $X$ be a $k$-dimensional polytope in $\R^k$, and let $y \in \R^k$ be a point outside $X$ that does not lie in the affine hull of any facet of $X$.
  Then there exists an acyclic matching on the poset of faces of $X$ visible from $y$ such that no face is critical.
\end{theorem}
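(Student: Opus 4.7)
The plan is to induct on the dimension $k$, combining a Bruggesser--Mani line shelling (Theorem \ref{thm:line-shelling}), the reversibility of shellings (Lemma \ref{lemma:reverse-shelling}), and the Patchwork Theorem \ref{teo:patchwork} to assemble the matching piece by piece.

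For $k=1$, the polytope $X$ is a closed segment with two vertices; genericity of $y$ makes exactly one endpoint visible, the visible-face poset is $\{X,v\}$, and pairing these two elements gives the desired matching with no critical face.

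For the inductive step I apply Theorem \ref{thm:line-shelling} to obtain a shelling $F_1, \ldots, F_r, F_{r+1}, \ldots, F_s$ of $\partial X$ in which the visible facets $F_1, \ldots, F_r$ come first, and define
\[
  \varphi(X) = 1, \qquad \varphi(G) = \max\{\, j \in \{1, \ldots, r\} \mid G \subseteq F_j \,\} \quad \text{for every visible face } G \neq X.
\]
Since $G \preceq G'$ means $G \supseteq G'$, we have $\{j : G \subseteq F_j\} \subseteq \{j : G' \subseteq F_j\}$, so $\varphi$ is order-preserving. By the Patchwork Theorem it suffices to produce an acyclic matching without critical faces on each fiber $\varphi^{-1}(j)$. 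The fiber $\varphi^{-1}(1)$ equals $\{X, F_1\}$: every proper face of $F_1$ lies in $\partial F_1 \subseteq \bigcup_{i>1} F_i$ and so has $\varphi > 1$. I pair $X$ with $F_1$.

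For each $j \geq 2$, the fiber $\varphi^{-1}(j)$ consists of those faces $G \subseteq F_j$ whose containing facets in $\partial F_j$ all lie in $\{F_j \cap F_i : i < j\}$. By Lemma \ref{lemma:reverse-shelling} applied to the original shelling, the subcomplex $F_j \cap (F_{j+1} \cup \cdots \cup F_s)$ is an initial segment of a shelling of $\partial F_j$. The plan is then to exhibit a point $y_j \in |F_j|$, outside $F_j$ and in general position with respect to $F_j$, whose visible facets in $F_j$ are exactly $\{F_j \cap F_i : i < j\}$; such a $y_j$ should be produced geometrically by restricting the line underlying the Bruggesser--Mani shelling of $\partial X$ from $y$ to the affine hull $|F_j|$. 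Once $y_j$ is identified, $\varphi^{-1}(j)$ coincides with the poset of faces of $F_j$ visible from $y_j$, and the induction hypothesis applied to the $(k-1)$-polytope $F_j$ with base point $y_j$ supplies the required acyclic matching without critical faces.

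The main obstacle is producing the auxiliary point $y_j$ and verifying that its visibility pattern on $\partial F_j$ matches the ambient shelling's partition of the neighbours of $F_j$ into low-index and high-index ones. This amounts to a compatibility statement between the line shelling of $\partial X$ from $y$ and an induced line shelling of $\partial F_j$; once this is established, everything else is a routine combination of the Patchwork Theorem and the induction hypothesis.
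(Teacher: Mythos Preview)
Your inductive strategy is sound, and the obstacle you flag at the end is the only missing piece; it can in fact be filled exactly along the lines you suggest. Let $\ell$ be the Bruggesser--Mani line through $y$ and a generic interior point of $X$, so that the visible facets $F_1,\dots,F_r$ appear in the order in which $\ell$ crosses their supporting hyperplanes as one travels from the interior toward $y$. For $j\ge 2$ set $y_j=\ell\cap|F_j|$. Because $\ell$ meets the hyperplanes $|F_m|$ at pairwise distinct parameters, $y_j$ lies on no $|F_m|$ with $m\ne j$, hence on no facet-defining hyperplane $|F_j|\cap|F_m|$ of $F_j$; and $y_j\notin X$ since $\ell$ has already exited $X$ through $F_1$. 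For a facet $F_m$ adjacent to $F_j$, the point $y_j$ lies on the outer side of $|F_m|$ exactly when the parameter of $|F_m|$ along $\ell$ precedes that of $|F_j|$; by the shelling order this happens if and only if $m<j$ (for invisible $F_m$ one checks directly that both the interior point and $y$ lie in the inner half-space of $|F_m|$, hence so does $y_j$). Thus the facets of $F_j$ visible from $y_j$ are precisely $\{F_j\cap F_m:m<j\}$, and your induction goes through. Note that this step genuinely uses the line-shelling structure: for an arbitrary shelling with visible facets first, the set $\{F_j\cap F_i:i<j\}$ need not be the visibility pattern of any point.

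This is a different route from the paper's. The paper avoids induction on $k$ altogether: it takes a shelling $G_1,\dots,G_s$ in which the visible facets come \emph{last}, invokes the general result from \cite{delucchi2008shelling} that a shelling of a regular CW sphere induces an acyclic matching on the augmented face poset whose only critical cell is the last facet, verifies that this matching is homogeneous with respect to the grading $F\mapsto\min\{i:F\le G_i\}$, restricts to the visible range, and finally matches the surviving critical facet with $X$. The paper's argument is shorter because it outsources the recursive work to the shelling-to-matching machinery; your argument is more self-contained (it never appeals to that machinery) and makes transparent how the Bruggesser--Mani line restricts to give the auxiliary base points on each facet.
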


\begin{proof}
  By \cite[Theorem 8.12]{ziegler2012lectures} and \cite[Lemma 8.10]{ziegler2012lectures}, there is a shelling $G_1,\dots,G_s$ of $\partial X$ such that the facets visible from $y$ are the last ones.
  Suppose that $G_t,G_{t+1},\dots, G_s$ are the visible facets.
Notice that there is at least one visible facet and at least one non-visible facet.
  In particular, the first facet $G_1$ is not visible and the last facet $G_s$ is visible.
  In other words, we have $2\leq t \leq s$.
  
  In \cite[Proposition 1]{delucchi2008shelling} it is proved that a shelling of a regular CW complex $Y$ induces an acyclic matching on the poset of cells $(P, <)$ of $Y$ (augmented with the empty face $\emptyset$), with critical cells corresponding to the spanning facets of the shelling.
  In our case, $Y = \partial X$ is a regular CW decomposition of a sphere, so the only spanning facet of a shelling is the last one (see for example \cite[Lemma 2.13]{delucchi2008shelling}).
  
  Let $\M$ be an acyclic matching on $\partial X$ induced by the shelling $G_1,\dots,G_s$, as in \cite[Proposition 1]{delucchi2008shelling}.
  We claim that the construction of \cite{delucchi2008shelling} produces a matching which is homogeneous with respect to the grading $\varphi\colon (P, <) \to \{1,\dots,s\}$ given by
  \[ \varphi(F) = \min \{ i \in \{1,\dots,s\} \mid F \leq G_i \}. \]
  To prove this, we need to briefly go through the construction of $\M$.
  The first step \cite[Lemma 2.10]{delucchi2008shelling} is to construct a total order $\sqsubset_i$ on each $P_i$ (the set of faces of codimension $i$).
  The order $\sqsubset_0$ is simply the shelling order of the facets.
  It follows from the recursive construction of $\sqsubset_i$ that each $\varphi|_{P_i} \colon (P_i, \sqsubset_i) \to \{1,\dots,s\}$ is order-preserving.
  Then the linear extension $\vartriangleleft$ of $P$ constructed in \cite[Definition 2.11]{delucchi2008shelling} is such that $\varphi\colon (P,\vartriangleleft) \to \{1,\dots,s\}$ is also order-preserving.
  By construction of the matching \cite[Lemma 2.12]{delucchi2008shelling}, if $(p,q) \in \M$ (with $p \geq q$) then $p \vartriangleleft q$.
  From this we obtain $\varphi(p) \geq \varphi(q)$ and $\varphi(p) \leq \varphi(q)$, so $\varphi(p) = \varphi(q)$.
  Therefore the matching is homogeneous with respect to $\varphi$.
  
  The set of visible faces of $X$ is $\varphi^{-1}(\{t,\dots,s\}) \cup \{ X \}$.
  Notice that the empty face $\emptyset$ belongs to $\varphi^{-1}(1)$, so it does not appear in $\varphi^{-1}(\{t,\dots,s\})$ because $t \geq 2$.
  
  Let $\M'$ be the restriction of $\M$ to $\varphi^{-1}(\{t,\dots,s\})$.
  This is an acyclic matching on $\varphi^{-1}(\{t,\dots,s\})$ with exactly one critical face, the facet $G_s$.
  Then $\M' \cup \{(X, G_s) \}$ is an acyclic matching on the poset of visible faces of $X$ such that no face is critical.
\end{proof}

We are finally able to attach the matchings on the fibers $\eta_C^{-1}(E)$, using the previous results of this section.

\begin{theorem}
  \label{thm:matching}
  Let $\A$ be a locally finite hyperplane arrangement, and let $\dashv$ be a valid order of the set of chambers $\C(\A)$. For every chamber $C \in \C(\A)$, there exists a proper acyclic matching on $N(C)$ such that the only critical cell is $\cell{C, F_C}$. The union of these matchings forms a proper acyclic matching on $\sal(\A)$ with critical cells in bijection with the chambers.
\end{theorem}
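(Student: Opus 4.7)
The plan is to apply the Patchwork Theorem (Theorem~\ref{teo:patchwork}) twice, mirroring the partition $\sal(\A) = \bigsqcup_{C \in \C} N(C)$ from Section~\ref{sec:decomposition}: first, for each chamber $C$, I construct an acyclic matching on $N(C)$ whose only critical cell is $\cell{C, F_C}$; then I assemble these into a single matching on $\sal(\A)$ and verify acyclicity and properness.

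For the fixed-$C$ step, Lemma~\ref{lemma:eta-order-preserving} tells me that $\eta_C \colon N(C) \to (\C, \leq_C)$ is a poset map, so by Patchwork I can treat one fiber $\eta_C^{-1}(E)$ at a time. A non-empty fiber falls into exactly one of two cases. If $E$ is the chamber opposite to $C$ with respect to $X_C$, then Lemma~\ref{lemma:special-fiber} identifies the fiber with the singleton $\{\cell{C, F_C}\}$, which I leave unmatched (it will be the unique critical cell of $N(C)$). Otherwise, Corollary~\ref{corollary:visible} combined with Lemma~\ref{lemma:visible-faces} identifies $\eta_C^{-1}(E)$ with the poset of faces of $E' = E \cap X_C$ visible from an interior point $y_{C'}$ of $C' = F_C$, inside the contracted arrangement $\A^{X_C}$. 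Lemma~\ref{lemma:unb-bounded} reduces this visibility poset to that of a bounded polytope $\tilde E$, and Theorem~\ref{teo:fiber} then furnishes an acyclic matching with no critical face. Patching these matchings together via $\eta_C$ yields the sought acyclic matching on $N(C)$.

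For the assembly step, since $\{N(C)\}_{C \in \C}$ partitions $\sal(\A)$, the union of the matchings is a matching on $\sal(\A)$ whose critical cells are in bijection with $\C$. Acyclicity follows from a second application of Patchwork, using the grading $\varphi \colon \sal(\A) \to (\C, \dashv)$ that sends $\cell{D, F}$ to the unique $C$ with $\cell{D, F} \in N(C)$. This $\varphi$ is order-preserving: if $\cell{D',F'} < \cell{D,F}$ and $\cell{D,F} \in S(C)$, then $\supp(F') \subseteq \supp(F)$ forces $s(C, D) \cap \supp(F') = \emptyset$, so $C$ and $D$ lie in the same chamber of $\A_{|F'|}$, whence $C.F' = D.F' = D'$ and hence $\cell{D',F'} \in S(C)$ as well. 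For properness I would construct a compact grading homogeneous with respect to the matching, exploiting that each fiber $\eta_C^{-1}(E)$ is finite (visible faces of a polyhedron form a finite set) together with the local finiteness of $\A$. The most delicate point I expect is Step~1, where the identifications in Corollaries~\ref{corollary:visible-unrestricted} and~\ref{corollary:visible}, the bounded-replacement of Lemma~\ref{lemma:unb-bounded}, and the shelling-based matching of Theorem~\ref{teo:fiber} must combine coherently across the two cases of the fiber.
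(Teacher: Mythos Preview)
Your plan coincides with the paper's proof. The paper packages both of your Patchwork steps into a single grading $\eta\colon \sal(\A)\to \C\times\C$, $\eta(\cell{D,F})=(C,D^F)$, with the partial order on $\C\times\C$ generated by $(C',E')\leq(C,E)$ iff $C'\dashveq C$ and $E'\leq_C E$; this is exactly the product of your two gradings $\varphi$ and $\eta_C$, so the acyclicity arguments are equivalent. Your verification that $\varphi$ is order-preserving is the same observation the paper uses (that each $S(C)$ is a lower ideal), and your treatment of the fibers via Lemmas~\ref{lemma:special-fiber}, \ref{lemma:visible-faces}, \ref{lemma:unb-bounded} and Theorem~\ref{teo:fiber} matches the paper exactly.

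The one place where your proposal is only a gesture is properness. Note that $\varphi\colon\sal(\A)\to(\C,\dashv)$ alone is \emph{not} a compact grading: $N(C)\cong\F(\A^{X_C})$ can be infinite when $\dim X_C>0$, so $\varphi^{-1}(\{C'\mid C'\dashveq C\})$ need not be finite. You must therefore pass to the finer grading on $\C\times\C$ (or an equivalent refinement) and show that each $(\C\times\C)_{\leq(C,E)}$ is finite. The paper does this by a short double induction, first on $C$ along $\dashv$ and then on $|s(C,E)|$, using that there are only finitely many $C'\dashveq C$ and finitely many $E'\leq_C E$; this is where ``local finiteness'' actually enters. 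Once you supply that argument, your proof is complete and essentially identical to the paper's.
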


\begin{proof}
  Consider the map $\eta\colon \sal(\A) \to \C \times \C$ defined as
  \[
    \eta(\cell{D,F}) = (C, D^F),
  \]
  where $C \in \C$ is the chamber such that $\cell{D,F} \in N(C)$.

  Corollary \ref{corollary:visible} provides a description of the non-empty fibers $\eta^{-1}(C,E)$, since by definition we have $\eta^{-1}(C,E)=\eta_C^{-1}(E)$.
  By Lemma \ref{lemma:special-fiber}, we know that for every $C \in \C$ there is exactly one non-empty fiber such that $E \cap X_C = C \cap X_C$, and this fiber contains the single cell $\cell{C,F_C}$.
  By Lemma \ref{lemma:visible-faces} and Lemma \ref{lemma:unb-bounded}, every other non-empty fiber $\eta^{-1}(C,E)$ is isomorphic to the poset of visible faces of some polytope in $X_C$ (with respect to some external point not lying on the affine hull of the facets).
  Finally, by Theorem \ref{teo:fiber}, this poset admits an acyclic matching with no critical faces.
  
  We want to use the Patchwork Theorem (Theorem \ref{teo:patchwork}) to attach these matchings together.
  To do so, we first need to define a partial order on $\C \times \C$ that makes $\eta$ a poset map.
  The order $\leq$ on $\C \times \C$ is the transitive closure of:
  \[
    (C',E') \leq (C,E) \quad \text{if and only if} \quad C' \dashveq C \text{ and } E' \leq_C E
  \]
  (here we denote by $\dashveq$ the ``less than or equal to'' with respect to the total order $\dashv$).
  
  To prove that $\eta$ is a poset map, suppose to have $\cell{D',F'} \leq \cell{D,F}$ in $\sal(\A)$.
  Let $\eta(\cell{D',F'})=(C',E')$ and $\eta(\cell{D,F})=(C,E)$. 
  Since $S(C)$ is a lower ideal of $\sal(\A)$, we immediately obtain that $\cell{D',F'} \in S(C)$ and thus $C' \dashveq C$. Then, Lemma \ref{lemma:eta-order-preserving} implies that $E' \leq_C E$.
  Therefore $(C',E') \leq (C,E)$.
  
  By the Patchwork Theorem, the union of the matchings on the fibers of $\eta$ forms an acyclic matching on $\sal(\A)$, with critical cells in bijection with the chambers.
  
  We now need to prove that this matching is proper.
  To do so, we prove that the $(\C \times \C)$-grading $\eta$ is compact.
  Since every fiber $\eta^{-1}(C,E)$ is finite by Lemma \ref{lemma:fiber}, we only need to show that the poset $(\C \times \C)_{\leq (C,E)}$ is finite for every pair of chambers $(C,E)$.
  
  We prove this by double induction, first on the chamber $C$ (with respect to the order $\dashv$) and then on $m=|s(C,E)|$.
  The base case $C=C_0$ and $m=0$ is trivial, since $E=C_0$. 
  
  We want now to prove the induction step.
  Given a pair $(C,m) \in \C \times \N$, suppose that the claim is true for every pair $(C',m')$ such that either $C' \dashv C$, or $C'=C$ and $m'<m$.
  For every chamber $E$ with $|s(C,E)| = m$ we have that 
  \begin{equation*}
    (\C \times \C)_{\leq (C,E)} \; = \!\!\! \bigcup_{
      \substack{C' \dashveq C \\ E' \leq_C E \\ (C',E')\neq (C,E)}
    }{\!\!\!\!\!\!\! (\C \times \C)_{\leq (C',E')}} \;\;\cup\; \{(C,E)\}.
  \end{equation*}
  This is a union of a finite number of sets, and by the induction hypothesis every set $(\C \times \C)_{\leq (C',E')}$ is finite.
  Therefore the set $(\C \times \C)_{\leq (C,E)}$ is finite.
    
  By the Patchwork Theorem, the matchings on the fibers $\eta^{-1}(C,E)$ can be attached together to form a proper acyclic matching on $\sal(\A)$.
  By construction, this matching is a union of proper acyclic matchings on the subposets $N(C)$ for $C \in \C$, each of them having $\cell{C,F_C}$ as the only critical cell.
\end{proof}

We end this section with a few remarks.
We are not going to use them in the rest of this paper, but they are interesting by themselves (especially in relation with \cite{delucchi2008shelling}).

The first remark is that, without the need of a valid order, the results of this section allow us to obtain a proper acyclic matching on $S(C_0)$ (for any chamber $C_0 \in \C$) with the single critical cell $\cell{C_0,C_0}$.
This is because $N(C_0) = S(C_0)$, and in the construction of the matching on $N(C_0)$ we do not use the existence of a valid order that begins with $C_0$.
As noted in Section \ref{sec:decomposition}, there is a natural poset isomorphism $S(C_0) \cong \F$ for every chamber $C_0 \in \C$.
Then the existence of an acyclic matching on $S(C_0)$ can be stated purely in terms of $\F$, without speaking of the Salvetti complex.
This result appears in \cite[Theorem 3.6]{delucchi2008shelling} in the case of the face poset of an oriented matroid.

\begin{provedtheorem}
  \label{thm:matching-faces}
  Let $\A$ be a locally finite hyperplane arrangement.
  For every chamber $C \in \C(\A)$, there is a proper acyclic matching on the poset of faces $\F(\A)$ such that $C$ is the only critical face.
\end{provedtheorem}

The second remark is that, given a valid order $\dashv$ of $\C$ and a chamber $C \in \C$, the poset $N(C)$ is isomorphic to $\F(\A^{X_C})$.
This is the analogue of \cite[Lemma 4.20]{delucchi2008shelling}.

\begin{lemma}
  \label{lemma:NC-F}
  Suppose that $\dashv$ is a valid order of $\C$. For every chamber $C \in \C$ there is a poset isomorphism
  \[ N(C) \cong \F\big(\A^{X_C}\big). \]
\end{lemma}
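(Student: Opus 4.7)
The plan is to chain together two natural poset identifications: the isomorphism $S(C) \cong \F(\A)$ recalled in Section~\ref{sec:decomposition}, and the standard identification of the faces of $\A$ lying in a flat $X$ with the faces of the restricted arrangement $\A^X$.

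First I would invoke the poset isomorphism $\phi \colon S(C) \to \F(\A)$, $\cell{D,F} \mapsto F$, together with Lemma~\ref{lemma:NC}, to identify $N(C)$ with the subposet
\[
  \F(\A)_{X_C} \;=\; \{ F \in \F(\A) \mid F \subseteq X_C \} \;\subseteq\; \F(\A).
\]
It will then suffice to exhibit a poset isomorphism between $\F(\A)_{X_C}$ and $\F\big(\A^{X_C}\big)$.

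The key step is the observation that the stratification of $X_C$ induced by the ambient arrangement $\A$ coincides with the stratification of $X_C$ viewed as the ambient space of $\A^{X_C}$: every hyperplane $H \in \A_{X_C}$ contains $X_C$ entirely and so separates no two points of $X_C$, whereas every $H \in \A \setminus \A_{X_C}$ that is not parallel to $X_C$ cuts $X_C$ along the codimension-one hyperplane $H \cap X_C$ that by definition belongs to $\A^{X_C}$ (and hyperplanes parallel to $X_C$ again induce no separation inside $X_C$). A sign-vector comparison then yields a bijection $F \mapsto F$ between $\F(\A)_{X_C}$ and $\F\big(\A^{X_C}\big)$, which is automatically order-preserving because in both posets the partial order is reverse inclusion of subsets of $X_C$. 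Composing with $\phi^{-1}$ gives the desired isomorphism $N(C) \cong \F\big(\A^{X_C}\big)$. I do not expect any serious obstacle; the only point requiring a little care is the treatment of hyperplanes of $\A$ parallel to $X_C$, which one simply checks do not appear in $\A^{X_C}$ and indeed cut nothing inside $X_C$.
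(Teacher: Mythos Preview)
Your proposal is correct and follows essentially the same approach as the paper: both send $\cell{D,F}\in N(C)$ to the face $F$, invoking Lemma~\ref{lemma:NC} to see that this lands in $\F(\A^{X_C})$. You simply factor this through the intermediate identification $N(C)\cong\F(\A)_{X_C}$ and spell out the standard fact that the stratification of $X_C$ by $\A$ agrees with that by $\A^{X_C}$, which the paper leaves implicit.
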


\begin{proof}
  The isomorphism from $N(C)$ to $\F\big(\A^{X_C}\big)$ sends a cell $\cell{D,F} \in N(C)$ to the face $F$, which is in $\F(\A^{X_C})$ by Lemma \ref{lemma:NC}.
  The inverse map sends a face $F \in \F(\A^{X_C})$ to the cell $\cell{C.F,F}$, which is in $N(C)$ by definition of $S(C)$ and by Lemma \ref{lemma:NC}.
  These maps are order-preserving.
\end{proof}

Together, Lemma \ref{lemma:NC-F} and Theorem \ref{thm:matching-faces} give an alternative (but equivalent) construction of our matching on $\sal(\A)$, closer to the approach of \cite{delucchi2008shelling}.

 \section{Euclidean matchings}
\label{sec:euclidean}

In this section we are going to construct a valid order $\eu$ of the set of chambers $\C$, for any locally finite arrangement $\A$, using the Euclidean distance in $\R^n$.
Then we are going to prove that the matching induced by this order (given by Theorem \ref{thm:matching}) yields a minimal Morse complex.

Denote by $d$ the Euclidean distance in $\R^n$.
Also, if $K$ is a closed convex subset of $\R^n$, denote by $\rho_K(x)$ the projection of a point $x\in \R^n$ onto $K$.
The point $\rho_K(x)$ is the unique point $y \in K$ such that $d(x,y) = d(x, K)$.

The first step is to prove that there exist a lot of \emph{generic points} with respect to the arrangement $\A$.
For this, we need the following technical lemma.
By \emph{measure} we always mean the Lebesgue measure in $\R^n$.

\renewcommand{\S}{\mathcal{S}}
\begin{lemma}\label{lemma:measure}
  Let $K_1$ and $K_2$ be two closed convex subsets of $\R^n$.
  Let
  \[ \S = \{ x \in \R^n \mid d(x,K_1) = d(x,K_2) \text{ and } \rho_{K_1}(x) \neq \rho_{K_2}(x) \}. \]
  Then $\S$ has measure zero.
\end{lemma}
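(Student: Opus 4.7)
The plan is to realize $\mathcal{S}$ as a subset of a regular level set of a $C^1$ function and conclude by the implicit function theorem. Specifically, for $i=1,2$ I would consider the squared distance
\[ f_i(x) = d(x, K_i)^2, \]
and their difference $g = f_1 - f_2$. The key input I would invoke is the classical fact from convex analysis that, for a closed convex subset $K \subseteq \mathbb{R}^n$, the function $x \mapsto d(x,K)^2$ is $C^1$ on $\mathbb{R}^n$ with gradient $2(x - \rho_K(x))$. (If this is not assumed as background, I would prove it directly from the variational characterization $\langle x - \rho_K(x),\, y - \rho_K(x) \rangle \le 0$ for all $y \in K$, which gives Lipschitz continuity of $\rho_K$ and hence $C^1$ smoothness of $f_i$.)

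Granting this, $g$ is $C^1$ with $\nabla g(x) = 2\bigl(\rho_{K_2}(x) - \rho_{K_1}(x)\bigr)$. Now I would translate the two defining conditions of $\mathcal{S}$ into conditions on $g$: the equality of distances forces $g(x) = 0$ (both distances are nonnegative), and the inequality of projections forces $\nabla g(x) \neq 0$. Therefore
\[ \mathcal{S} \subseteq \bigl\{ x \in \mathbb{R}^n : g(x) = 0 \text{ and } \nabla g(x) \neq 0 \bigr\}. \]

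Finally, I would show the right-hand side has Lebesgue measure zero. At any point of this regular level set, the implicit function theorem provides an open neighborhood $U$ in which $\{g = 0\}$ is a $C^1$ hypersurface, hence has $n$-dimensional measure zero in $U$. By second countability of $\mathbb{R}^n$ (e.g.\ using the countable basis of open balls with rational centers and radii inside which $\nabla g$ stays nonzero), the regular level set is a countable union of such measure-zero pieces, and therefore itself has measure zero.

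The only real obstacle is the $C^1$ regularity of the squared distance to a closed convex set: once that is in hand, the geometric content of $\mathcal{S}$ fits precisely the hypothesis of the implicit function theorem, and the rest is a routine covering argument. I do not foresee any issue arising from $K_1$ or $K_2$ being unbounded, since the projection $\rho_{K_i}$ is well defined and continuous on all of $\mathbb{R}^n$ for any closed convex set.
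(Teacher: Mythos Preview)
Your proof is correct and follows essentially the same strategy as the paper's: exhibit $\mathcal{S}$ as (a subset of) the zero set of a difference of distance-type functions on which the gradient does not vanish, then conclude measure zero. The paper works with $d_1 - d_2$, which is only differentiable off $K_1 \cup K_2$, and invokes the geometric-measure-theory fact that $\nabla f = 0$ a.e.\ on $f^{-1}(0)$; your use of the squared distance makes the function globally $C^1$ and lets you finish with the implicit function theorem instead, which is slightly more elementary and sidesteps the separate check for points of $K_1 \cup K_2$.
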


\begin{proof}
  This proof was suggested by Federico Glaudo.
  Let $d_i(x) = d(x,K_i)$ for $i=1,2$.
  Each function $d_i \colon \R^n \to \R$ is differentiable on $\R^n \setminus K_i$ by \cite[Lemma 2.19]{giaquinta2012convex}, and its gradient in a point $x \not\in K_i$ is the versor with direction $x-\rho_{K_i}(x)$.
  
  Let $f(x) = d_1(x) - d_2(x)$.
  Denote by $A$ the open set of points $x \in \R^n \setminus (K_1 \cup K_2)$ such that $\rho_{K_1}(x) \neq \rho_{K_2}(x)$.
  On this set, the function $f$ is differentiable and its gradient does not vanish.
  It is known that the gradient of $f$ must vanish almost everywhere on $A\cap f^{-1}(0)$ \cite[Corollary 1 of Section 3.1]{lawrence1992measure}, hence $A\cap f^{-1}(0)$ has measure zero.
  
  It is easy to check that the points in $K_1 \cup K_2$ cannot belong to $\S$.
  Then $\S = A \cap f^{-1}(0)$ has measure zero.
\end{proof}

\newcommand{\G}{\mathcal{G}}
\newcommand{\T}{\mathcal{T}}
\begin{lemma}[Generic points]\label{lemma:eucpt}
  Given a locally finite hyperplane arrangement $\A$ in $\R^n$, let $\G \subseteq \R^n$ be the set of points $x \in \R^n$ such that:
  \begin{enumerate}[(i)]
    \item for every $C,C' \in \mathcal{C}$ with $d(x,C)=d(x,C')$, we have $\rho_C(x) = \rho_{C'}(x) \in C \cap C'$;
    
    \item for every $L,L' \in \L$ with $L' \subsetneq L$, we have $d(x,L') > d(x,L)$.
\end{enumerate}
  Then the complement of $\G$ has measure zero.
  In particular, $\G$ is dense in $\R^n$.
\end{lemma}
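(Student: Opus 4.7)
The plan is to write the complement of $\G$ as a countable union of measure-zero subsets of $\R^n$. Since $\A$ is locally finite and $\R^n$ is $\sigma$-compact, the arrangement $\A$ itself is countable, and therefore so are the chamber set $\C$ and the flat poset $\L$. Thus if for each fixed pair of chambers and each fixed pair of nested flats the corresponding ``bad'' set has measure zero, the union over all such pairs will also have measure zero; density of $\G$ in $\R^n$ then follows immediately, since any measurable set whose complement has measure zero is dense.

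For condition (i), the set of points $x$ for which (i) fails is exactly
\[ \bigcup_{\{C,C'\} \subseteq \,\C} \{ x \in \R^n \mid d(x,C)=d(x,C') \text{ and } \rho_C(x) \neq \rho_{C'}(x) \}, \]
and each summand has measure zero by a direct application of Lemma \ref{lemma:measure} to the closed convex sets $K_1=C$ and $K_2=C'$. Note that once $\rho_C(x) = \rho_{C'}(x)$, the common point automatically lies in $C \cap C'$, so no extra argument is needed for the final clause of (i).

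For condition (ii), fix flats $L' \subsetneq L$ in $\L$. Writing $V$ and $V'$ for the direction subspaces of $L$ and $L'$, we have $V' \subsetneq V$. Because $L' \subseteq L$ we always have $d(x,L)\leq d(x,L')$, with equality if and only if $\rho_L(x)\in L'$. Hence the set of points $x$ with $d(x,L)=d(x,L')$ is the preimage $\rho_L^{-1}(L')$, which is the affine subspace of $\R^n$ with direction $V' + V^\perp$. Since $V \cap V^\perp = \{0\}$ and $V' \subsetneq V$, this direction has dimension strictly less than $n$, so the set has measure zero. Taking the countable union over all pairs $L' \subsetneq L$ in $\L$ finishes the argument for (ii).

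Neither step is genuinely hard: the chambers part reduces at once to Lemma \ref{lemma:measure}, which has already been proved, and the flats part is elementary affine geometry of orthogonal projections. The only point requiring any care is the countability of $\C$ and $\L$ deduced from local finiteness of $\A$, which is immediate from $\sigma$-compactness of $\R^n$ but deserves to be stated explicitly, as it is precisely what makes the countable-union argument available.
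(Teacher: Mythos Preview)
Your proof is correct and follows essentially the same approach as the paper: decompose the complement of $\G$ into countably many pieces indexed by pairs of chambers (handled via Lemma~\ref{lemma:measure}) and pairs of nested flats (handled by showing the equidistance set is a proper affine subspace). Your version is in fact slightly more detailed than the paper's, which simply asserts that $\T_{L,L'}$ is an affine subspace of codimension at least $1$ without spelling out the direction $V' + V^\perp$, and which leaves the countability of $\C$ and $\L$ implicit.
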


\begin{proof}
  Given $C,C' \in \C$, let $\S_{C,C'}$ be the set of points $x \in \R^n$ such that $d(x,C_1) = d(x,C_2)$ and $\rho_{C_1}(x) \neq \rho_{C_2}(x)$.
  By Lemma \ref{lemma:measure}, every $\S_{C,C'}$ has measure zero.
  
  Similarly, for every $L,L' \in \L$ with $L' \subsetneq L$, denote by $\T_{L,L'}$ the set of points $x \in \R^n$ such that $d(x, L') = d(x,L)$.
  We have that $\T_{L,L'}$ is an affine subspace of $\R^n$ of codimension at least $1$, and in particular it has measure zero.
  
  The complement of $\G$ is the union of all the sets $\S_{C,C'}$ for $C,C' \in \C$ and $\T_{L,L'}$ for $L,L' \in \L$ with $L' \subsetneq L$.
  This is a finite or countable union of sets of measure zero, hence it has measure zero.
\end{proof}

We call \emph{generic points} the elements of $\G$, as defined in Lemma \ref{lemma:eucpt}.
Notice that, by condition (ii) with $L=\R^n$, a generic point must lie in the complement of $\A$.

\begin{remark}
 An alternative proof of the previous Lemma can be found in \cite[Part III, Section 3.1 and Part I, Section 2.2]{goresky1988stratified}, within the more general setting of density of Morse functions.
\end{remark}

\begin{remark}
  \label{rmk:generic-point-equivalent}
  An equivalent definition of a generic point is the following: $x_0 \in \R^n$ is generic with respect to $\A$ if and only if every flat of $\A$ has a different distance from $x_0$.
  Indeed, this definition immediately implies condition (ii) of Lemma \ref{lemma:eucpt}.
  It also implies condition (i), because for any chamber $C$ we have $d(x_0, C) = d(x_0, L)$ where $L$ is the smallest flat that contains $\rho_C(x_0)$.
  Conversely, suppose that $x_0$ satisfies both conditions (i) and (ii).
  Given two flats $L, L' \in L$ with $d(x_0, L) = d(x_0, L')$, by condition (ii) the projections $\rho_L(x_0)$ and $\rho_{L'}(x_0)$ must lie in the relative interior of faces $F, F' \in \F$ with $|F| = L$ and $|F'| = L'$.
  Defining $C$ as the chamber containing $F$ and with the greatest distance from $x_0$, we immediately obtain that $\rho_L(x_0)=\rho_C(x_0)$.
  If $C'$ in defined in the same way (using $F'$ and $L'$), the chambers $C$ and $C'$ violate condition (ii) unless $L=L'$.
  With this equivalent definition, it is possible to prove Lemma \ref{lemma:eucpt} in an alternative way without using Lemma \ref{lemma:measure} (cf.\ Lemma \ref{lemma:add-generic-hyperplane}).
\end{remark}

We are now able to define Euclidean orders.

\begin{definition}[Euclidean orders]
  A total order $\eu$ of the set of chambers $\C$ is \emph{Euclidean} if there exists a generic point $x_0$ such that $C \eu C'$ implies that $d(x_0,C)\leq d(x_0,C')$.
  The point $x_0$ is called a \emph{base point} of the Euclidean order~$\eu$.
  
  Notice that a Euclidean order is any linear extension of the partial order on $\C$ given by $C < C'$ if $d(x_0, C) < d(x_0, C')$, for some fixed generic point $x_0 \in \R^n$.
\end{definition}

In particular, for every generic point $x_0$ there exists at least one Euclidean order with $x_0$ as a base point.
Since the set of generic points is dense, we immediately get the following corollary.

\begin{corollary}
  For every chamber $C_0 \in \C$, there exists a Euclidean order $\eu$ that starts with $C_0$.
\end{corollary}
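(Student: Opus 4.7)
The plan is to exploit the density of generic points (Lemma \ref{lemma:eucpt}) to locate a base point inside the chamber $C_0$ itself, so that $C_0$ automatically comes first in every compatible linear extension.

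First I would observe that any generic point must lie in the complement of $\A$: condition (ii) of Lemma \ref{lemma:eucpt}, applied with $L = \R^n$ and $L'$ any hyperplane of $\A$, forces $d(x_0, L') > 0$. Hence every generic point is contained in the relative interior of a unique chamber. Next, the relative interior $\mathrm{int}(C_0)$ is a non-empty open subset of $\R^n$, and since the complement of the set $\G$ of generic points has measure zero, the intersection $\G \cap \mathrm{int}(C_0)$ is non-empty. Pick any $x_0 \in \G \cap \mathrm{int}(C_0)$ as the candidate base point.

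With this choice, $\rho_{C_0}(x_0) = x_0$ and therefore $d(x_0, C_0) = 0$. For any other chamber $C' \neq C_0$, the point $x_0$ does not belong to $C'$ (chambers have disjoint interiors and $x_0$ lies on no hyperplane), and $C'$ is closed, so $d(x_0, C') > 0$. Consequently $d(x_0, C_0) < d(x_0, C')$ for every $C' \neq C_0$, which means that $C_0$ is the unique minimum of the distance function on $\C$.

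Finally, any linear extension $\eu$ of the strict partial order $C \prec C'$ iff $d(x_0, C) < d(x_0, C')$ (refining the ties in any way consistent with the tolerance $\leq$) is by definition a Euclidean order with base point $x_0$, and it must list $C_0$ first. No step here is a real obstacle: the only thing to check carefully is that generic points actually lie in chamber interiors, which is immediate from condition (ii) of the definition.
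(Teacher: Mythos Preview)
Your proof is correct and follows exactly the same approach as the paper, which simply says ``It is enough to take the base point $x_0$ in the interior of the chamber $C_0$.'' You have merely spelled out the details (existence of a generic point in $\mathrm{int}(C_0)$ via Lemma~\ref{lemma:eucpt}, and why $C_0$ is then the unique chamber at distance zero) that the paper leaves implicit.
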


\begin{proof}
  It is enough to take the base point $x_0$ in the interior of the chamber $C_0$.
\end{proof}

See Figure \ref{fig:eucl-order} for an example of a Euclidean order.
We now prove that every Euclidean order is valid, in the sense of Definition \ref{def:valid-order}.

\begin{theorem}
  \label{thm:euclidean-valid}
  Let $\eu$ be a Euclidean order with base point $x_0$.
  For every chamber $C$, let $x_C = \rho_C(x_0)$ and let $F_C$ be the smallest face of $C$ that contains $x_C$.
  Then $\J(C)$ is the principal upper ideal generated by $X_C = |F_C|$.
  Therefore $\eu$ is a valid order.
\end{theorem}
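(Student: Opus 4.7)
The plan is to prove the two inclusions $\{Y \in \L : Y \subseteq X_C\} \subseteq \J(C)$ and $\J(C) \subseteq \{Y \in \L : Y \subseteq X_C\}$ (recalling that $\L$ is ordered by reverse inclusion, so the principal upper ideal generated by $X_C$ consists of the flats $Y \subseteq X_C$). The first inclusion reduces, by monotonicity of $\supp$, to showing $X_C \in \J(C)$. Throughout, I will use as a preliminary observation that $\rho_{X_C}(x_0) = x_C$, so that $(x_0-x_C) \perp X_C$ and $d(x_0, X_C) = d(x_0, C) =: r$: indeed, since $x_C$ lies in the relative interior of $F_C \subseteq X_C \cap \overline{C}$, a neighborhood of $x_C$ in $X_C$ is contained in $\overline{C}$, making $x_C$ a local (hence, by convexity of $X_C$, global) minimizer of $d(x_0, \cdot)|_{X_C}$.

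For the first part, I would fix $C' \eu C$ with $C' \neq C$ and suppose for contradiction that no hyperplane of $\supp(X_C)$ separates $C$ and $C'$; then $C$ and $C'$ lie in the same chamber $\bar{C} := \pi_{X_C}(C) = \pi_{X_C}(C')$ of $\A_{X_C}$. By local finiteness, in a sufficiently small ball $B$ around $x_C$ the only hyperplanes of $\A$ meeting $B$ are those through $x_C$, i.e.\ those in $\supp(X_C) = \A_{X_C}$; therefore $C \cap B = \bar{C} \cap B$, and the tangent (and hence normal) cones of $\overline{C}$ and $\overline{\bar{C}}$ at $x_C$ coincide. Since $x_C = \rho_C(x_0)$ places $(x_0-x_C)$ in the normal cone of $\overline{C}$ at $x_C$, the convex projection criterion then yields $x_C = \rho_{\overline{\bar{C}}}(x_0)$. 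The same local identification forces $x_C \notin \overline{C'}$: otherwise $C' \cap B$ would be a nonempty open set contained in $\bar{C} \cap B = C \cap B$, contradicting $C \neq C'$. Hence the projection of $x_0$ onto $\overline{C'} \subseteq \overline{\bar{C}}$ is some point different from $x_C$, and uniqueness of the closest point in a closed convex set gives $d(x_0, C') > r$, contradicting $C' \eu C$.

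For the reverse inclusion, I would take $Y \in \L$ with $Y \not\subseteq X_C$ and let $\bar{C}_Y := \pi_Y(C)$. The strategy is to show $d(x_0, \overline{\bar{C}_Y}) < r$: one can then pick a generic point $p \in \bar{C}_Y$ with $d(x_0,p) < r$, and the chamber $C'$ of $\A$ containing $p$ automatically satisfies $C' \subseteq \bar{C}_Y$ (chambers of $\A$ refine those of $\A_Y$), so $C' \eu C$ and $\pi_Y(C') = \pi_Y(C)$, i.e.\ $\supp(Y) \cap s(C,C') = \emptyset$, witnessing $Y \notin \J(C)$. Assuming for contradiction that $\rho_{\overline{\bar{C}_Y}}(x_0) = x_C$, I would analyze the local structure at $x_C$: the set $\overline{\bar{C}_Y}$ is locally cut out by the sub-arrangement $\A' := \A_Y \cap \A_{X_C}$, so setting $W := \bigcap_{H \in \A'} H$ (with $W = \R^n$ if $\A' = \emptyset$) produces a flat of $\A$ containing both $Y$ and $X_C$. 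Near $x_C$, $W$ lies inside $\overline{\bar{C}_Y}$, so its tangent direction $W - x_C$ is a linear subspace of the tangent cone of $\overline{\bar{C}_Y}$ at $x_C$; consequently the normal cone at $x_C$ is contained in $W^\perp$. Thus $(x_0-x_C) \perp W$, and together with $x_C \in W$ this gives $\rho_W(x_0) = x_C$ and $d(x_0, W) = r = d(x_0, X_C)$. By the genericity of $x_0$ (Remark~\ref{rmk:generic-point-equivalent}: distinct flats have distinct distances from $x_0$), $W = X_C$, and so $Y \subseteq W = X_C$, contradicting the hypothesis. The main obstacle is precisely this last step: correctly identifying the local tangent cone via the auxiliary flat $W$ and then invoking genericity to rule out the coincidence of distances that would otherwise be possible.
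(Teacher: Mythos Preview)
Your proof is correct, and the overall architecture matches the paper's: show $X_C \in \J(C)$, then show every $Y \in \J(C)$ satisfies $Y \subseteq X_C$. The execution, however, is genuinely different in flavor.

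For the first inclusion, the paper splits into the two cases $d(x_0,C') < d(x_0,C)$ and $d(x_0,C') = d(x_0,C)$, invoking condition~(i) of Lemma~\ref{lemma:eucpt} in the equality case. You avoid the case split entirely: you identify the tangent/normal cones of $\overline{C}$ and $\overline{\bar C}$ at $x_C$ via local finiteness, and use the variational characterization of the convex projection to conclude $\rho_{\overline{\bar C}}(x_0)=x_C$, which immediately forces $d(x_0,C') > r$ once $x_C\notin \overline{C'}$.

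For the second inclusion, both proofs hinge on the same auxiliary flat: your $W=\bigcap_{H\in \A_Y\cap \A_{X_C}}H$ is exactly the paper's $X'=\bigwedge\{X_C,Y\}$ in $\L$. The paper then uses condition~(ii) to get $d(x_0,X')<d(x_0,X_C)$, and from this constructs by hand a face $F$ in the direction of $\rho_{X'}(x_0)-x_C$ and a chamber $C''=C.F$, finally exhibiting a hyperplane $H$ that lies in the (empty) set $\supp(F)\cap s(C,C'')$. Your argument is shorter and more conceptual: you observe that if $\rho_{\overline{\bar C_Y}}(x_0)=x_C$, then $W-x_C$ sits in the tangent cone of $\overline{\bar C_Y}$ at $x_C$, so $x_0-x_C\in W^\perp$, whence $d(x_0,W)=d(x_0,X_C)$; the equivalent genericity criterion of Remark~\ref{rmk:generic-point-equivalent} (distinct flats have distinct distances) then forces $W=X_C$, contradicting $Y\subseteq W$. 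This buys you a cleaner contradiction without ever naming a specific separating hyperplane or auxiliary chamber, at the cost of appealing to slightly more convex analysis (normal cones and the projection criterion) than the paper does.
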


\begin{proof}
  First we want to prove that $X_C \in \J(C)$.
  This is equivalent to proving that for every chamber $C' \eu C$ there exists a hyperplane $H \in \supp(X_C)\cap s(C,C')$.
  We have that $\rho_{X_C}(x_0) = x_C$ because $F_C$ is the smallest face that contains $x_C$.
  Thus it is also true that $\rho_{\pi_{X_C}(C)}(x_0)=x_C$.
  Given a chamber $C'\eu C$, we have two possibilities.
  \begin{itemize}
    \item $d(x_0,C')<d(x_0,C)$.
    Then $C' \nsubseteq \pi_{X_C}(C)$, because all the points of $\pi_{X_C}(C)$ have distance at least $d(x_0,C)$ from $x_0$.
    This means that there exists a hyperplane $H \in \supp(X_C)=\A_{X_C}$ which separates $C$ and $C'$.
  
    \item $d(x_0,C')=d(x_0,C)$.
    Since $x_0$ is a generic point, we have that $x_C = x_{C'} \in C \cap C'$.
    Then $F_C$ is a common face of $C$ and $C'$, and every hyperplane in $s(C,C')$ contains $F_C$.
 \end{itemize}

  Now we want to prove that $X \subseteq X_C$ for every $X \in \J(C)$.
  Suppose by contradiction that $X \nsubseteq X_C$ for some $X \in \J(C)$.
  In particular, $X_C \neq \R^n$ and thus $x_0 \neq x_C$.
  We first prove that $\supp(X_C \cup X)$ is non-empty.
  
  Let $C'$ be the chamber of $\A$ such that $x_0 \in \pi_{X_C}(C')$ and $C' \prec F_C$.
  Since $x_C \in X_C \subseteq \pi_{X_C}(C')$, the entire line segment $\ell$ from $x_0$ to $x_C$ is contained in $\pi_{X_C}(C')$.
  Thus there is a neighborhood of $x_C$ in $\ell$ which is contained in $C'$, hence $d(x_0, C') < d(x_0, x_C)$ and therefore $C' \eu C$.
  Since $X \in \J(C)$, there exists a hyperplane $H \in \supp(X) \cap s(C,C')$.
  We also have that $F_C \subseteq C \cap C'$, and thus $X_C \subseteq H$.
 
  Consider now the flat $X' = \bigcap\{Z \in \L \mid X_C \cup X \subseteq Z\}$, i.e.\ the meet of $X_C$ and $X$ in $\L$.
  The flat $X'$ is contained in the hyperplane $H$ constructed above, so in particular $X' \neq \R^n$.
  In addition, since $X \nsubseteq X_C$, $X'$ is different from $X_C$.
  Then the point $y_0=\rho_{X'}(x_0)$ is different from $x_C$, and we have $d(x_0, y_0) < d(x_0, x_C)$, because $x_0$ is generic (see condition (ii) of Lemma \ref{lemma:eucpt}).
  Let $F$ be the smallest face that contains the line segment $[x_C, \, x_C+ \epsilon (y_0-x_C)]$ for some $\epsilon>0$.
  By construction, for every chamber $C''$ such that $C'' \preceq F$ we have that $C'' \eu C$.
  This holds in particular for $C'' = C.F$.
Then we have $\supp(F) \cap s(C,C'') = \emptyset$.
  
  Since $X \in \J(C)$ and $C'' \eu C$, there exists a hyperplane $H \in \supp(X) \cap s(C,C'')$.
  By construction, $x_C \in C \cap C''$ and then $X_C$ is contained in every hyperplane of $s(C,C'')$.
  In particular, $X_C \subseteq H$.
  Therefore $X_C \cup X \subseteq H$, which means that $H \in \supp(X_C \cup X) \subseteq \supp(X')$.
  Both $x_C$ and $y_0$ belong to $X'$, hence $F \subseteq X'$.
  Putting everything together, we get $H \in \supp(X') \cap s(C,C'') \subseteq \supp(F) \cap s(C,C'') = \emptyset$.
  This is a contradiction.
\end{proof}

Since Euclidean orders are valid, we are able to construct acyclic matchings on the Salvetti complex of any arrangement.

\begin{definition}[Euclidean matchings]
  \label{def:euclidean-matching}
  Let $\A$ be a locally finite hyperplane arrangement in $\R^n$.
  We say that an acyclic matching $\M$ on $\sal(\A)$ is a \emph{Euclidean matching} with base point $x_0 \in \R^n$ if:
  \begin{enumerate}[(i)]
    \item the point $x_0$ is generic with respect to $\A$;
    \item $\M$ is homogeneous with respect to the poset map $\eta \colon \sal(\A) \to \C \times \C$ induced by a Euclidean order $\eu$ with base point $x_0$ (defined as in the proof of Theorem \ref{thm:matching});
    \item there is exactly one critical cell $\cell{C, F_C}$ for every chamber $C \in \C$, where $F_C$ is the smallest face of $C$ that contains $\rho_C(x_0)$.
  \end{enumerate}
  Notice that, by condition (ii), a Euclidean matching is also proper.
\end{definition}

\begin{theorem}
  \label{thm:euclidean-matching}
  Let $\A$ be a locally finite arrangement in $\R^n$.
  For every generic point $x_0 \in \R^n$, there exists a Euclidean matching on $\sal(\A)$ with base point $x_0$.
\end{theorem}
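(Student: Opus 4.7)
The plan is to assemble the theorem directly from the earlier results of this section. Given a generic point $x_0$, the first task is to build a Euclidean order $\eu$ with base point $x_0$. Since $\A$ is locally finite, every bounded region of $\R^n$ is cut by only finitely many hyperplanes and therefore contains only finitely many chambers; in particular, for each $r > 0$ the set $\{C \in \C \mid d(x_0, C) < r\}$ is finite. This guarantees that the partial order on $\C$ defined by $C < C'$ if $d(x_0, C) < d(x_0, C')$ admits a linear extension of order type $\omega$ (or finite, if $\C$ is finite), as required by the convention after the definition of valid order. Any such extension is by construction a Euclidean order with base point $x_0$.

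Next, I invoke Theorem \ref{thm:euclidean-valid} to conclude that this Euclidean order $\eu$ is a valid order, and that for each chamber $C$ the generator of $\J(C)$ is the flat $X_C = |F_C|$, where $F_C$ is the smallest face of $C$ containing $\rho_C(x_0)$. Then Theorem \ref{thm:matching}, applied to $\eu$, produces a proper acyclic matching $\M$ on $\sal(\A)$ whose critical cells are exactly the cells $\cell{C, F_C}$ for $C \in \C$, and which is obtained via the Patchwork Theorem as a union of matchings on the fibers of the poset map $\eta \colon \sal(\A) \to \C \times \C$ defined in the proof of Theorem \ref{thm:matching}.

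It remains to verify the three conditions of Definition \ref{def:euclidean-matching}. Condition (i) holds by hypothesis on $x_0$. Condition (ii) holds because $\M$ is, by construction, a union of matchings on the fibers of $\eta$, hence homogeneous with respect to the poset map $\eta$ induced by the chosen Euclidean order $\eu$. Condition (iii) holds because, by Theorem \ref{thm:euclidean-valid}, the face $F_C$ produced by Theorem \ref{thm:matching} coincides with the smallest face of $C$ containing $\rho_C(x_0)$, so the critical cells are indeed of the prescribed form. This completes the proof.

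There is essentially no obstacle here beyond bookkeeping: the hard work has already been done in Theorems \ref{thm:euclidean-valid} and \ref{thm:matching}. The only point that requires a brief justification is the existence of an order-type-$\omega$ linear extension of the distance partial order, which is why I would explicitly record that local finiteness of $\A$ forces only finitely many chambers to lie within any bounded distance of $x_0$.
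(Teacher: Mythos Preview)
Your proposal is correct and follows exactly the same approach as the paper, which proves the theorem in one line by citing Theorems~\ref{thm:matching} and~\ref{thm:euclidean-valid}. Your write-up is simply a more explicit unpacking of that citation, including the verification of the three conditions in Definition~\ref{def:euclidean-matching} and the observation that local finiteness guarantees a linear extension of the correct order type; the paper leaves all of this implicit.
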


\begin{proof}
  It follows from Theorems \ref{thm:matching} and \ref{thm:euclidean-valid}.
\end{proof}

\begin{remark}
  \label{rmk:matching-independent}
  For a given generic point $x_0$, there might be more than one Euclidean order $\eu$ with base point $x_0$.
  Nonetheless, all Euclidean orders with a given base point produce the same faces $F_C$ (by Theorem \ref{thm:euclidean-valid}) and the same critical cells (by Theorem \ref{thm:matching}).
  The decomposition
  \[ \sal(\A) = \bigsqcup_{C \in \C} N(C) \]
  also depends only on $x_0$ (by Lemma \ref{lemma:NC}), and therefore the definition of a Euclidean matching is not influenced by the choice of $\eu$ (once the base point $x_0$ is fixed).
\end{remark}

We are going to prove that a Euclidean matching yields a minimal Morse complex.
In order to do so, we first prove two lemmas about generic points.

\begin{lemma}
  \label{lemma:generic-point-subarrangement}
  Let $x_0 \in \R^n$.
  If $x_0$ is generic with respect to an arrangement $\A$, then it is also generic with respect to any subarrangement $\A' \subseteq \A$.
\end{lemma}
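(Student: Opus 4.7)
The plan is to avoid working directly with the two conditions (i) and (ii) of Lemma \ref{lemma:eucpt}, and instead use the reformulation given in Remark \ref{rmk:generic-point-equivalent}: a point $x_0$ is generic with respect to an arrangement if and only if every flat of that arrangement has a distinct distance from $x_0$. This single-condition characterization is much better suited to the proof, because the chambers of the subarrangement $\A'$ are in general unions of chambers of $\A$ (so condition (i) does not transfer directly), whereas the flats of $\A'$ behave well.

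The key observation is the inclusion $\L(\A') \subseteq \L(\A)$. Indeed, any flat of $\A'$ is by definition an intersection of some subset of hyperplanes of $\A'$; since $\A' \subseteq \A$, this same subset lives inside $\A$, and therefore the intersection also belongs to $\L(\A)$. (In particular, the ambient space $\R^n$ is treated as the empty intersection in both posets.)

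With this inclusion in hand, the proof is immediate: assuming $x_0$ is generic with respect to $\A$, the function $L \mapsto d(x_0, L)$ is injective on $\L(\A)$ by Remark \ref{rmk:generic-point-equivalent}, and hence also injective on the subset $\L(\A') \subseteq \L(\A)$. Applying Remark \ref{rmk:generic-point-equivalent} in the other direction, $x_0$ is generic with respect to $\A'$.

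There is no real obstacle here; the only point worth verifying carefully is the inclusion $\L(\A') \subseteq \L(\A)$, which is a direct consequence of the definition of the poset of flats as a poset of intersections of subsets of hyperplanes.
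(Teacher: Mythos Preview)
Your proof is correct. The paper's own proof works directly with conditions (i) and (ii) of Lemma~\ref{lemma:eucpt}: condition (ii) follows from the inclusion $\L(\A') \subseteq \L(\A)$ exactly as you observe, while condition (i) is handled by noting that every chamber of $\A'$ is a union of chambers of $\A$, so the projection of $x_0$ onto a chamber of $\A'$ coincides with the projection onto one of its constituent $\A$-chambers, and then condition (i) for $\A$ applies. Your route via Remark~\ref{rmk:generic-point-equivalent} is equally valid and arguably tidier, collapsing both conditions into the single inclusion $\L(\A') \subseteq \L(\A)$; the paper's approach has the minor advantage of not invoking the equivalence in that remark (which itself requires a short argument), but either way the essential content is the same observation about flats. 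Your parenthetical that condition (i) ``does not transfer directly'' is slightly overstated --- it does transfer via the one-line union-of-chambers argument --- but this is only motivation and does not affect the correctness of your proof.
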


\begin{proof}
  Condition (i) for $\A'$ holds because a chamber of $\A'$ is a union of chambers of $\A$.
  Condition (ii) follows from the fact that $\L(\A') \subseteq \L(\A)$.
\end{proof}

\begin{lemma}
  \label{lemma:add-generic-hyperplane}
  Let $\A$ be a locally finite arrangement in $\R^n$, and let $x_0 \in \R^n$ be a generic point with respect to $\A$. Let $\H \subseteq (\R^n\setminus \{0\}) \times \R \subseteq \R^{n+1}$ be the set of elements $(a_1,\dots,a_n,b)$ such that $x_0$ is generic also with respect to the arrangement $\A \cup \{H\}$, where $H$ is the hyperplane defined by the equation $a_1x_1 + \dots + a_nx_n = b$.
  Then the complement of $\H$ in $\R^{n+1}$ has measure zero.
  In particular, $\H$ is dense in $\R^{n+1}$.
\end{lemma}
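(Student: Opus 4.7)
The plan is to apply the equivalent characterization of genericity from Remark \ref{rmk:generic-point-equivalent}: $x_0$ is generic with respect to $\A \cup \{H\}$ if and only if all flats of $\A \cup \{H\}$ have pairwise distinct distances from $x_0$. Our assumption provides this for the flats of $\A$, so I only need to control the ``new'' flats introduced by $H$, namely $H$ itself and the intersections $L \cap H$ for $L \in \L(\A)$ with $L \not\subseteq H$ and $L \cap H \neq \emptyset$.

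The bad configurations to avoid are therefore: (I) $d(x_0, H) = d(x_0, L)$ for some $L \in \L(\A)$; (II) $d(x_0, L \cap H) = d(x_0, L')$ for some $L, L' \in \L(\A)$ with $L \cap H$ a new flat; and (III) $d(x_0, L_1 \cap H) = d(x_0, L_2 \cap H)$ for two distinct new flats of this form. Since $\A$ is locally finite, $\L(\A)$ is countable, so each of (I), (II), (III) yields a countable family of exceptional subsets of $\R^{n+1}$. It suffices to show that each of these subsets has Lebesgue measure zero in $\R^{n+1}$, since a countable union of measure-zero sets is measure zero.

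For (I), fixing $L$ and setting $r = d(x_0, L)$, the condition $d(x_0, H) = r$ reads $|a\cdot x_0 - b| = r\,\|a\|$, a nontrivial real-algebraic equation in $(a,b)$, whose solution set has codimension one and thus measure zero. For (II) and (III), on the open subset of parameters $(a,b)$ where $L \cap H$ (resp.\ $L_1 \cap H$ and $L_2 \cap H$) has the expected codimension, the distance $d(x_0, L \cap H)$ is a real-analytic function of $(a,b)$. I would show it is not identically equal to the target value by exhibiting a one-parameter perturbation of $(a,b)$ along which the projection of $x_0$ onto $L \cap H$, and hence the distance, varies. Since a nonconstant real-analytic function on a connected open set has vanishing locus of measure zero, each bad set in (II) and (III) has measure zero, and the union over the countably many choices of $L, L', L_1, L_2$ finishes the proof.

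The main obstacle is the nonconstancy verification for (II) and (III): one has to rule out the possibility that, for some pair of flats, the distance condition degenerates into an identity in $(a,b)$. The cleanest route is to fix the direction $a$ and vary $b$ alone, using the fact that translating $H$ moves $L \cap H$ (when nonempty and of the expected dimension) along a fixed direction in $L$, so the distance from $x_0$ changes strictly unless the motion is orthogonal to the segment from $x_0$ to its projection, a condition that in turn fails on an open dense set of directions $a$. Once this local nondegeneracy is established, the measure-zero conclusion follows from the real-analytic argument outlined above.
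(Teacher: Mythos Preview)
Your overall strategy coincides with the paper's: use the equivalent characterization of genericity (all flats have distinct distances from $x_0$), break the bad set into countably many pieces indexed by pairs of flats, and show each piece is the zero locus of a nontrivial algebraic (or analytic) function of $(a,b)$. The difference lies in how nonconstancy is verified in cases (II) and (III). The paper works with $d^2(x_0, X\cap H)$ as a rational function of $(a,b)$ and, rather than perturbing, simply exhibits a specific $H$ where the equality fails: for (II) one takes $H$ far enough that $d(x_0, X\cap H)\geq d(x_0,H)>d(x_0,Y)$; for (III) one takes $H$ to be the hyperplane through $\rho_X(x_0)$ orthogonal to $x_0-\rho_X(x_0)$, so that $d(x_0, X\cap H)=d(x_0,X)$ while $d(x_0, Y\cap H)>d(x_0,H)=d(x_0,X)$ because (after swapping $X,Y$ if needed) $\rho_X(x_0)\notin Y$. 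This explicit-witness approach is shorter and avoids the perturbation analysis.

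Your perturbation sketch would also work, but as written it is incomplete for case (III): you argue that translating $H$ changes $d(x_0, L\cap H)$, but what you need is that the \emph{difference} $d(x_0, L_1\cap H)-d(x_0, L_2\cap H)$ is not identically zero, and both terms move when $b$ varies. You would have to compare, say, the coefficients of the two quadratics in $b$ and argue they differ for generic $a$. This is doable but fiddlier than the paper's one-line choice of $H$.
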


\begin{proof}
  In this proof we use the equivalent definition of a generic point given in Remark \ref{rmk:generic-point-equivalent}.
  Assume that $H$ intersects generically every flat $X \in \L(\A)$, i.e.\ $\codim(X \cap H) = \codim(X) + 1$.
  This condition excludes a subset of measure zero in $\R^{n+1}$.
  
  Since $x_0$ is generic with respect to $\A$, the distances between $x_0$ and the flats of $\A$ are all distinct.
  Consider now a flat of $\A \cup \{H\}$ of the form $X \cap H$, for some flat $X \in \L(\A)$ of dimension $\geq 1$.
  The squared distance $d^2(x_0, X \cap H)$ is a rational function of the coefficients $(a_1,\dots,a_n,b)$ that define $H$.
  
  Given two flats $X,Y \in \L(\A)$ with $\dim(X) \geq 1$, the condition $d^2(x_0, X \cap H) = d^2(x_0, Y)$ can be written as a polynomial equation $p(a_1,\dots,a_n,b)=0$.
  This equation is not satisfied if $d(x_0, H) > d(x_0, Y)$, therefore the polynomial $p$ is not identically zero.
  Then the zero locus of $p$ has measure zero.
  
  Similarly, given two flats $X,Y \in \L(\A)$ with $\dim(X) \geq 1$ and $\dim(Y) \geq 1$, the condition $d^2(x_0, X \cap H) = d^2(x_0, Y \cap H)$ can be written as a polynomial equation $q(a_1,\dots,a_n,b) = 0$.
  Up to exchanging $X$ and $Y$, we can assume that $\rho_X(x_0) \not \in Y$, because $d(x_0,X) \neq d(x_0, Y)$.
  If $H$ is the hyperplane orthogonal to the vector $\rho_X(x_0) - x_0$ that passes through $\rho_X(x_0)$, then we have $d(x_0, X\cap H) = d(x_0, X)$ and $d(x_0, Y \cap H) > d(x_0, H) = d(x_0, X)$ (the inequality is strict because $Y$ does not contain $\rho_H(x_0) = \rho_X(x_0)$).
  Therefore the polynomial $q$ is not identically zero, and the zero locus of $q$ has measure zero.
  
  Thus the complement of $\H$ is contained in a finite or countable union of sets of measure zero, and hence it has measure zero.
\end{proof}

\begin{theorem}[Minimality]
  \label{thm:minimality}
  Let $\A$ be a locally finite hyperplane arrangement in $\R^n$, and let $\M$ be a Euclidean matching on $\sal(\A)$ with base point $x_0$.
  Then the associated Morse complex $\sal(\A)_{\M}$ is minimal (i.e., all the incidence numbers va\-nish).
\end{theorem}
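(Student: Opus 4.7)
The plan is to reduce to the case of a finite arrangement via a locality argument for alternating paths, then prove minimality for finite arrangements by matching the critical cell counts with the Betti numbers.

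For a finite $\A$, the Morse complex $\sal(\A)_\M$ is a bounded finitely generated free $\Z$-chain complex computing $H_*(M(\A); \Z)$. By Orlik--Solomon, this integral homology is torsion-free, and the rank $b_k := \dim H^k(M(\A); \Q)$ is given by the Poincaré polynomial $\sum_k b_k t^k = \sum_{X \in \L} |\mu(X)|\, t^{\codim X}$. If I can show that the number of critical $k$-cells equals $b_k$ in every degree, then a standard rank argument on a bounded free chain complex whose homology is free of matching ranks forces every differential, hence every incidence number, to vanish. By Theorem \ref{thm:euclidean-matching}, the number of critical $k$-cells equals $n_k := \#\{C \in \C : \codim F_C = k\}$. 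The combinatorial core of the finite case is therefore the identity
\begin{equation*}
  \sum_{C \in \C} t^{\codim F_C} \;=\; \sum_{X \in \L} |\mu(X)| \, t^{\codim X}.
\end{equation*}
I would partition the left-hand sum by $X = |F_C|$ and show that the number of chambers with $|F_C| = X$ equals $|\mu(X)|$. Geometrically, $|F_C| = X$ is equivalent to $\rho_X(x_0)$ lying in the relative interior of the restricted chamber $C \cap X \in \C(\A^X)$, so such chambers biject with pairs consisting of a chamber of $\A^X$ whose relative interior contains $\rho_X(x_0)$ together with a chamber of the central arrangement $\A_X$ lying on the $x_0$-side of $X$. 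A signed count of such pairs via Zaslavsky's formula should recover $|\mu(X)|$.

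For the locally finite case, I would invoke Lemma \ref{lemma:alternating}: for any pair of critical cells $\sigma, \tau$ with $\dim \sigma = \dim \tau + 1$, the incidence number $[\sigma_\M : \tau_\M]$ is a finite sum over finitely many finite alternating paths, each passing through only finitely many cells and thus involving only finitely many hyperplanes of $\A$. I would then choose a finite subarrangement $\A' \subseteq \A$ containing these hyperplanes, augmented via Lemma \ref{lemma:add-generic-hyperplane} with auxiliary generic hyperplanes bounding a large enough region around $x_0$, and argue that the incidence number in $\sal(\A)_\M$ coincides with the corresponding one in $\sal(\A')_{\M'}$ for the induced Euclidean matching $\M'$ on the finite arrangement $\A'$. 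The hard part will be precisely this reduction step: truncating the arrangement merges chambers and can in principle alter both the Euclidean order on $\C(\A')$ and the faces $F_C$ for chambers appearing along the paths. Careful bookkeeping, exploiting that $\rho_C(x_0)$ depends only on the local shape of $C$ near its closest point to $x_0$ and that sufficiently tight bounding hyperplanes preserve the relevant local chamber structure, is needed to make the reduction rigorous.
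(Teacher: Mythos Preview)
Your overall two-stage strategy (finite case first, then a locality reduction for the infinite case) matches the paper's, but your finite case is significantly harder than it needs to be, and this is worth pointing out.

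\medskip

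\textbf{Finite case.} You aim to prove the degree-by-degree identity $n_k = b_k$ by showing $\#\{C : |F_C| = X\} = |\mu(X)|$ for each flat $X$. This is precisely Corollary~\ref{cor:betti-numbers}, which the paper obtains as a \emph{consequence} of Theorem~\ref{thm:minimality}, not as an ingredient. The paper's argument uses only the total count $\sum_k b_k = |\C(\A)|$ (Zaslavsky plus torsion-freeness via Orlik--Solomon). For a bounded chain complex of finitely generated free $\Z$-modules one has
\[
  \sum_k \bigl(\operatorname{rank} C_k - \operatorname{rank} H_k\bigr) \;=\; 2\sum_k \operatorname{rank}(\operatorname{im}\partial_k),
\]
so equality of the totals already forces every $\partial_k$ to vanish. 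Your per-flat count would work, but it amounts to a nontrivial Zaslavsky-type statement for each $\A_X$ (the chambers of $\A_X$ whose nearest point to $x_0$ lies on $X$ are exactly those contained in the closed half-space $\{v : \langle x_0 - \rho_X(x_0), v - \rho_X(x_0)\rangle \le 0\}$, and one must identify their number with $|\mu(X)|$). Your one-line sketch (``a signed count \ldots via Zaslavsky's formula'') does not yet do this; the phrase ``a chamber of $\A_X$ lying on the $x_0$-side of $X$'' is also imprecise, since $X$ is a flat rather than a hyperplane.

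\medskip

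\textbf{Infinite case.} Your reduction is the same idea as the paper's, and your caveats are exactly the right ones. The paper makes the ``careful bookkeeping'' precise not by tracking individual hyperplanes along paths, but by using the fibre structure of $\eta$: one takes the finite order ideal $S = \eta^{-1}\bigl((\C\times\C)_{\le (C,E)}\bigr)$, encloses all the relevant projections in a ball, adds $n+1$ generic bounding hyperplanes $\bar\A$ (Lemma~\ref{lemma:add-generic-hyperplane}), and passes to the finite arrangement $\A'$ of hyperplanes meeting the resulting bounded chamber together with $\bar\A$. The key technical step is that the natural injection $\psi\colon S \to \sal(\A')$ sends each fibre of $\eta$ bijectively onto a fibre of $\eta'$, so one may choose a Euclidean matching $\M'$ on $\sal(\A')$ whose restriction to $\psi(S)$ is exactly $\psi(\M|_S)$. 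Alternating paths from $\langle C, F_C\rangle$ stay in $S$, hence correspond under $\psi$, and the incidence number transfers verbatim to the finite case. This fibre-by-fibre transport is the rigorous replacement for your proposed argument about ``the local shape of $C$ near its closest point''.
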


\begin{proof}
  If the arrangement $\A$ is finite, it is well known that the sum of the Betti numbers of $\sal(\A)$ is equal to the number of chambers \cite{orlik1980combinatorics, zaslavsky1997facing}.
  By Theorem \ref{thm:matching}, the critical cells of $\M$ are in bijection with the chambers.
  Thus the Morse complex is minimal.

  Suppose from now on that $\A$ is infinite.
  Fix a chamber $C \in \C$, and consider the associated critical cell $\cell{C,F_C} \in N(C)$.
  Recall from the proof of Theorem \ref{thm:matching} the definition of the poset map $\eta \colon \sal(\A) \to \C \times \C$, and let $(C,E)=\eta(\cell{C,F_C})$.
  Since the matching is proper, the set $\eta^{-1}((\C \times \C)_{\leq (C,E)})$ is finite.
  
  Consider now the finite set of faces
  \[
    \U = \; \{ F \in \F \mid \cell{D,F} \in \eta^{-1}((\C \times \C)_{\leq (C,E)}) \text{ for some chamber } D \in \C \}.
  \]
  Let $B \subseteq \R^n$ be an open Euclidean ball centered in $x_0$ that contains the projection $\rho_F(x_0)$ for every face $F \in \U$.
  Let $\bar \A$ be a set of $n+1$ hyperplanes that do not intersect $B$, such that: $x_0$ is still generic with respect to $\A \cup \bar \A$; the chamber $K$ of the arrangement $\bar \A$ containing $B$ is bounded. Such an arrangement $\bar \A$ exists thanks to Lemma \ref{lemma:add-generic-hyperplane}.
  Consider the finite arrangement
  \[ \A' = \{ H \in \A \mid H \cap K \neq \emptyset \} \cup \bar \A, \]
  and let $\F_K \subseteq \F(\A)$ be the set of faces of $\A$ that intersect the interior of $K$.
  Notice that, by construction, we have $\U \subseteq \F_K$.
  In addition, there is a natural order-preserving and rank-preserving injection $\varphi\colon \F_K \to \F(\A')$ given by $\varphi(F) = F \cap K$.
  The image of $\varphi$ consists of the faces of $\A'$ that intersect the interior of $K$.

  By construction and by Lemma \ref{lemma:generic-point-subarrangement}, $x_0$ is still generic with respect to $\A'$ and all the chambers $D \in \C(\A)$ with $d(x_0, D) \leq d(x_0, C)$ intersect the interior of $K$.
  Thus, given a Euclidean order $\eu$ of $\C(\A)$ with base point $x_0$, there exists a Euclidean order $\euprime$ of $\C(\A')$ with base point $x_0$ such that $\varphi$ is an order-preserving bijection between the initial segment of $(\C(\A), \eu)$ up to $C$ and the initial segment of $(\C(\A'), \euprime)$ up to $\varphi(C)$.
    
  Consider the subcomplex $S = \eta^{-1}((\C\times\C)_{\leq (C,E)})$ of $\sal(\A)$.
  Since $\U \subseteq \F_K$, the map $\varphi$ induces an order-preserving and orientation-preserving injection $\psi \colon S \to \sal(\A')$ that maps a cell $\cell{D,G} \in S$ to the cell $\cell{\varphi(D),\varphi(G)} \in \sal(\A')$.
  Let $S' = \psi(S)$ be the copy of $S$ inside $\sal(\A')$.
  By definition of $S$, a fiber of $\eta$ is either disjoint from $S$ or entirely contained in $S$.
  Therefore, a non-critical cell of $S$ is matched with another cell of $S$.
  
  We now use the order $\euprime$ to construct a Euclidean matching $\M'$ on $\sal(\A')$.
  Denote by $\eta' \colon \sal(\A') \to \C(\A') \times \C(\A')$ the analogue of $\eta$ for the arrangement $\A'$ (see the proof of Theorem \ref{thm:matching}).
  Consider a fiber $\eta'^{-1}(C',E')$ that intersects $S'$.
  Then there is some cell $\cell{D',G'} \in \eta'^{-1}(C', E') \cap S'$, with $\cell{D',G'} = \psi(\cell{D,G})$ for some $\cell{D,G} \in S$.
  If we define $(\bar{C},\bar{E})=\eta(\cell{D,G})$, we have that $\varphi(\bar{C})=C'$ and $\varphi(\bar{E})=E'$, because by construction the cell $\cell{\varphi(D),\varphi(G)}$ is in the fiber $\eta^{-1}(\varphi(\bar{C}),\varphi(\bar{E}))$.
  By Corollary \ref{corollary:visible} and Lemma \ref{lemma:visible-faces}, the fiber $\eta'^{-1}(C',E')$ is isomorphic to the poset of faces of $E' \cap X_{C'}$ visible from some point $y_{C'}$ in the relative interior of $\C' \cap X_{C'}$.
  By construction of $\A'$, the map $\varphi$ induces a bijection between the faces of $E' \cap X_{C'}$ visible from $y_{C'}$ and the faces of $\bar E \cap X_{\bar C} = \bar E \cap X_{C'}$ visible from $y_{C'}$:
  if $F$ is a visible face of $\bar E \cap X_{C'}$, then $F \in \U$ and so $\varphi(F)$ is still visible;
  conversely, a visible face $F'$ of $E' \cap X_{\bar C}$ cannot be contained in any hyperplane of $\bar{\A}$, and by construction of $\A'$ it must also be a face of $\bar E$.
  Therefore the fiber $\eta'^{-1}(C',E')$ is the isomorphic image of the fiber $\eta^{-1}(\bar C, \bar E)$ under the map $\psi$.
  
  We have proved that a fiber of $\eta'$ is either disjoint from $S'$ or entirely contained in $S'$.
  Then we can choose the Euclidean matching $\M'$ so that its restriction to $S'$ coincides with the image of the restriction of $\M$ to $S$ under the isomorphism $\psi \colon S \to S'$.
  In particular, a cell $\cell{D,G} \in S$ is $\M$-critical if and only if $\psi(\cell{D,G}) \in S'$ is $\M'$-critical.
  
  Consider now a $\M$-critical cell $\cell{D,G} \in \sal(\A)$ such that there is at least one alternating path from $\cell{C,F_C}$ to $\cell{D,G}$.
  Since $\M$ is homogeneous with respect to $\eta$, every alternating path starting from $\cell{C,F}$ is entirely contained in $S$.
  In particular, $\cell{D,G} \in S$.
  Thus the map $\psi\colon S \to S'$ induces a bijection between the alternating paths from $\cell{C,F}$ to $\cell{D,G}$ in $\sal(\A)$ (with respect to the matching $\M$) and the alternating paths from $\psi(\cell{C,F})$ to $\psi(\cell{D,G})$ in $\sal(\A')$ (with respect to the matching $\M'$).
  In particular, the incidence number between $\cell{C,F}$ and $\cell{D,G}$ in the Morse complex $\sal(\A)_\M$ is the same as the incidence number between $\psi(\cell{C,F})$ and $\psi(\cell{D,G})$ in the Morse complex $\sal(\A')_{\M'}$.
  Since $\A'$ is finite, the Morse complex $\sal(\A')_{\M'}$ is minimal and all its incidence numbers vanish.
  Therefore the incidence number between $\cell{C,F}$ and $\cell{D,G}$ in $\sal(\A)_\M$ also vanishes.
\end{proof}

\tikzfading[name=fade outside,
inner color=transparent!0,
outer color=transparent!100]
\begin{figure}
   	\begin{tikzpicture}[label distance=-3]
    \clip (0,0) rectangle (12,6.2);
    
    \node[circle,inner sep=1.5pt, fill=black, label=left:{$x_0$}] (x0) at (8,3.5) {};
    \begin{scope}[every path/.style={draw=black, opacity=0.3, dashed}]
    \foreach \i in {1,...,6} {
    	\path (x0) circle (0.535*\i);
    }
    \end{scope}

   	\draw[name path=L3] (0,5) -- (12,0);
   	\fill[white] (4.1,3.25) circle(0.55);
    \draw[name path=L1] (0,1) -- (24,11);
    \draw[name path=L2] (9,0) -- (11,12);
    \draw[name path=L4] (0,3) -- (12,3);
    
    \path[name intersections={of=L1 and L2, by=x6}];
    \path[name intersections={of=L3 and L2, by=x7}];
    \path[name intersections={of=L1 and L3, by=x8}];
    \path[name intersections={of=L2 and L4, by=x4}];
    
    \node (C0) at (8.7,4) {$C_0=$\color{red}$\,F_0$};
    \node (C1) at (8.5,2.2) {$C_1$};
    \node (C2) at (6,5) {$C_2$};
    \node (C3) at (11,4) {$C_3$};
    \node (C4) at (11,1.8) {$C_4$};
    \node (C5) at (6,1) {$C_5$};
    \node (C6) at (10.8,5.9) {$C_6$};
    \node (C7) at (10.6,0.2) {$C_7$};
    \node (C8) at (1,3.8) {$C_8$};
    \node (C9) at (1,2.2) {$C_9$};

\begin{scope}[every node/.style={circle,inner sep=1.5pt,fill=red}]

      \node at (x4) {};
      \node at (x6) {};
      \node at (x7) {};
      \node at (x8) {};
    \end{scope}

\begin{scope}[every path/.style={draw=red, very thick, fill=red}]
     \draw (x6) -- (x8);
     \draw (x6)-- (x4);
\draw (x7) -- (x8);
     \draw (x4) -- (x8);
    \end{scope}
    
    \begin{scope}[every node/.style={text=red}]
	    \node (F1) at (7,2.75) {$F_1$};
	    \node (F2) at (7.1,4.23) {$F_2$};
	    \node (F3) at (9.9,4) {$F_3$};
	    \node (F4) at (9.7,2.7) {$F_4$};
	    \node (F5) at (7,1.8) {$F_5$};
	    \node (F6) at (10.13,5.48) {$F_6$};
	    \node (F7) at (9.35,0.85) {$F_7$};
	    \node (F8) at (4.17,3.25) {$F_8=F_9$};
    \end{scope}
    
    \fill[opacity=0.13, color=red] (x6) -- (x4) -- (x8);
  \end{tikzpicture}
  
  \caption{Euclidean order with respect to $x_0$.
  The faces $F_i = F_{C_i}$ defined in Theorem \ref{thm:euclidean-valid} are highlighted.}
  \label{fig:eucl-order}
\end{figure}
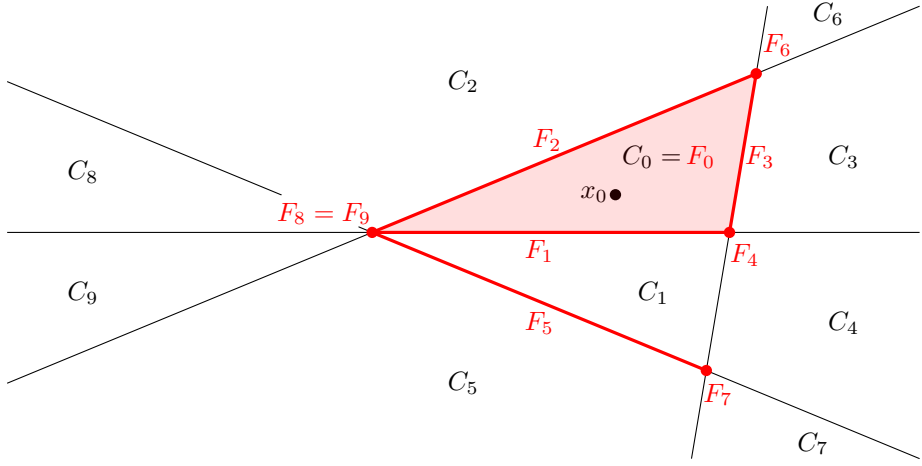

The following result is a direct consequence of Theorems \ref{thm:euclidean-matching} and \ref{thm:minimality}.
It gives a simple geometric way to compute the Betti numbers of the complement of an arrangement.

\begin{provedcorollary}[Betti numbers]
  \label{cor:betti-numbers}
  Let $\A$ be a (locally) finite hyperplane arrangement in $\R^n$, and let $x_0 \in \R^n$ be a generic point.
  The $k$-th Betti number of the complement $M(\A)$ is equal to the number of chambers $C$ such that the projection $\rho_C(x_0)$ lies in the relative interior of a face $F_C$ of codimension $k$.
  Equivalently, the Poincaré polynomial of $\A$ is given by
  \[ \pi(\A,t) = \sum_{C \in \,\C(\A)} t^{\,\codim F_C}. \qedhere \]
\end{provedcorollary}

In particular, Corollary \ref{cor:betti-numbers} solves a conjecture by Drton and Klivans on the characteristic polynomial of finite reflection arrangements \cite[Conjecture 6]{drton2010geometric}, since the coefficients of the characteristic and Poincaré polynomial coincide up to sign and reordering \cite[Definition 2.52]{orlik2013arrangements}.

\begin{example}
  Consider the line arrangement $\A$ of Figure \ref{fig:eucl-order}.
  For the given generic point $x_0$ in the interior of $C_0$, the computation of the Betti numbers $b_i$ according to Corollary \ref{cor:betti-numbers} goes as follows: there is one chamber (namely $C_0$) such that the projection of $x_0$ lies in its interior, so $b_0 = 1$; there are four chambers (namely $C_1$, $C_2$, $C_3$ and $C_5$) such that the projection of $x_0$ lies in the interior of a $1$-dimensional face, so $b_1 = 4$; finally, for the remaining chambers ($C_4$, $C_6$, $C_7$, $C_8$ and $C_9$) the projection of $x_0$ is a $0$-dimensional face, so $b_2 = 5$.
\end{example}

\begin{remark}
  For any choice of the generic point $x_0$, the only chamber that contributes to the $0$-th Betti number is the one containing $x_0$.
  In addition, for every hyperplane $H \in \A$ there is exactly one chamber $C$ such that $\rho_C(x_0) \in H$ and $\rho_C(x_0) \not\in H'$ for every $H' \in \A \setminus \{H\}$.
  Therefore Corollary \ref{cor:betti-numbers} immediately implies the well-known facts that $b_0(\A) = 1$ and $b_1(\A) = |\A|$.
\end{remark}

 \section{Brieskorn's Lemma and naturality}
\label{sec:brieskorn}

In this section we are going to relate the Morse complex of $\A$, constructed using a Euclidean matching, to the Morse complexes of subarrangements $\A_X$.

Given a flat $X \in \L(\A)$, for every face $\bar F \in \F(\A)$ such that $|\bar F|=X$ there is a natural inclusion of $\sal(\A_X)$ into $\sal(\A)$.
It maps a cell $\cell{D, G} \in \sal(\A_X)$ to the unique cell $\cell{C, F} \in \sal(\A)$ such that $\bar F \subseteq F \subseteq G$, $\dim F = \dim G$, and $C \subseteq D$.
We call this the inclusion of $\sal(\A_X)$ into $\sal(\A)$ \emph{around} $\bar F$.
Geometrically, this corresponds to including the complement of $\A_X^\mathbb{C}$, intersected with a neighborhood of some point in the interior of $\bar F$, into $M(\A)$.
The inclusions $\sal(\A_X) \hookrightarrow \sal(\A)$ that we are going to consider in this section are always of this type, for some face $\bar F \in \F(\A)$ with $|\bar F| = X$.

Recall from Definition \ref{def:euclidean-matching} that a Euclidean matching has a critical cell $\cell{C, F_C} \in \sal(\A)$ for every chamber $C$, where $F_C$ is the smallest face of $C$ containing $\rho_C(x_0)$.
Every critical cell $\cell{C, F_C}$ is thus associated to a flat $X_C = |F_C|$.
Conversely, given a flat $X \in \L(\A)$, the critical cells $\cell{C, F_C}$ associated to $X$ are exactly those for which $\rho_C(x_0) = \rho_X(x_0)$.

This simple observation yields a proof of Brieskorn's Lemma, a classical result in the theory of hyperplane arrangements due to Brieskorn \cite{brieskorn1973groupes}.
See also \cite[Lemma 5.91]{orlik2013arrangements} and \cite[Proposition 3.3.3]{callegaro2017integer}.

\begin{lemma}[Brieskorn's Lemma \cite{brieskorn1973groupes}]
  \label{lemma:brieskorn}
  Let $\A$ be a locally finite arrangement in $\R^n$.
  For every $k\geq 0$, there is an isomorphism
  \[ \theta_k \colon \bigoplus_{X \in \L_k} H_k(M(\A_X); \Z) \to H_k(M(\A); \Z) \]
  induced by suitable inclusions $j_X\colon \sal(\A_X) \hookrightarrow \sal(\A)$ of CW complexes.
  The inverse isomorphism $\theta_k^{-1}$ is induced by the natural inclusion maps $i_X \colon M(\A) \hookrightarrow M(\A_X)$.
\end{lemma}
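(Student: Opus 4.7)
The strategy is to exploit the explicit cell-level bijection furnished by the Euclidean matching construction. First, choose a generic point $x_0 \in \R^n$ for $\A$; by Lemma~\ref{lemma:generic-point-subarrangement} it is also generic for every subarrangement $\A_X$. Apply Theorem~\ref{thm:euclidean-matching} to construct a Euclidean matching $\M$ on $\sal(\A)$ and matchings $\M_X$ on each $\sal(\A_X)$ with the same base point $x_0$; by Theorem~\ref{thm:minimality} all of the resulting Morse complexes are minimal. Consequently, $H_k(M(\A);\Z)$ is freely generated by the critical $k$-cells $\cell{C,F_C}$ of $\sal(\A)_\M$, naturally partitioned according to the flat $|F_C|\in\L_k$, while $H_k(M(\A_X);\Z)$ is freely generated by the top-dimensional critical cells of $\sal(\A_X)_{\M_X}$. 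These latter have the form $\cell{C',X}$ for chambers $C'\in\C(\A_X)$ with $\rho_{C'}(x_0)=\rho_X(x_0)$, since $X$ itself is the unique face of $\A_X$ with affine span $X$.

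For each $X \in \L_k$, let $\bar F_X$ be the unique face of $\A$ whose relative interior contains $\rho_X(x_0)$; by Lemma~\ref{lemma:eucpt}\,(ii) we have $|\bar F_X|=X$. Take $j_X\colon\sal(\A_X)\hookrightarrow\sal(\A)$ to be the inclusion around $\bar F_X$ defined at the start of this section. The core of the argument is to show that $j_X$ restricts to a bijection between the top critical cells of $\sal(\A_X)_{\M_X}$ and the critical $k$-cells of $\sal(\A)_\M$ with $|F_C|=X$. By construction, $j_X(\cell{C',X})=\cell{C,\bar F_X}$, where $C$ is the unique chamber of $\A$ admitting $\bar F_X$ as a face and with $\pi_X(C)=C'$. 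The key observation is that $\supp(\bar F_X)=\A_X$, so the hyperplanes of $\A$ passing through $\rho_X(x_0)$ are exactly those of $\A_X$. This forces the tangent cones (and hence the outward normal cones) of $C$ and of $C'=\pi_X(C)$ at $\rho_X(x_0)$ to coincide, yielding the equivalence $\rho_C(x_0)=\rho_X(x_0) \iff \rho_{C'}(x_0)=\rho_X(x_0)$, which in turn is equivalent to $F_C=\bar F_X$. Summing over $X\in\L_k$, the map $\theta_k:=\sum_X(j_X)_*$ sends a basis of the domain to a basis of the codomain and is therefore an isomorphism.

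For the identification $\theta_k^{-1}=\bigoplus_X(i_X)_*$, it suffices to check that $(i_X)_*\circ(j_Y)_*=\delta_{X,Y}\,\mathrm{id}$. When $Y=X$ this follows because $i_X\circ j_X$ factors through a tubular neighborhood of $\bar F_X$ which deformation-retracts onto $M(\A_X)$. When $Y\ne X$, the image of $j_Y$ is supported near $\bar F_Y$, which lies in a proper flat from the perspective of $\A_X$, so the composition yields a $k$-cycle in $M(\A_X)$ that is null-homologous in top degree. The main obstacle is the tangent-cone / normal-cone argument of the previous paragraph, which transports the top-criticality condition between $\A$ and $\A_X$; verifying the inverse formula is more delicate and can be handled either by a direct cellular computation using Theorem~\ref{thm:incidence} or by a homotopical tubular-neighborhood argument reminiscent of the classical proof of Brieskorn's Lemma.
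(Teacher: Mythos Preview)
Your overall strategy coincides with the paper's: pick a generic $x_0$, apply Euclidean matchings to $\sal(\A)$ and each $\sal(\A_X)$, use minimality, and let $j_X$ be the inclusion around the face through $\rho_X(x_0)$ so that critical $k$-cells of $\sal(\A_X)$ biject with critical $k$-cells of $\sal(\A)$ having $|F_C|=X$.  Your tangent-cone / normal-cone justification of this bijection is a more explicit version of what the paper asserts in one line (``the critical $k$-cells of $\sal(\A_X)$ are in bijection through $j_X$ with the critical $k$-cells $\cell{C,F}$ of $\sal(\A)$ such that $|F|=X$'').  One small point worth tightening: ``sends a basis to a basis'' is a statement about $(j_X)_*$ on homology, not on cells; the paper phrases this by saying the generators of $H_k$ are classes of the form $[\cell{C,F_C}+\text{(non-critical)}]$, so that your biconditional (critical $\leftrightarrow$ critical, non-critical $\leftrightarrow$ non-critical under $j_X$) is exactly what is needed.

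The substantive divergence is in the treatment of $\theta_k^{-1}$.  The paper does \emph{not} verify $(i_X)_*\circ(j_Y)_*=\delta_{X,Y}$ directly.  Instead it invokes naturality of Salvetti's construction to obtain a square
\[
  \varphi_X \;\simeq\; i_X \circ \varphi \circ j_X,
\]
commuting up to homotopy, where $\varphi$ and $\varphi_X$ are the Salvetti homotopy equivalences; passing to homology immediately yields that the $X$-component of the inverse is $(i_X)_*$.  Your off-diagonal argument has a genuine gap: the assertion that for $Y\neq X$ the image of $j_Y$ ``lies in a proper flat from the perspective of $\A_X$'' fails whenever $\A_X\cap\A_Y=\emptyset$, since then no hyperplane of $\A_X$ contains $Y$ at all.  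The correct reason $(i_X)_*\circ(j_Y)_*$ vanishes in that situation is rather that the composite factors (up to homotopy, again by Salvetti naturality) through $H_k(M(\A_X\cap\A_Y))$, and the center of $\A_X\cap\A_Y$ has codimension $<k$, so this group is zero.  Either adopt the paper's naturality square, or make the factorization through the smaller subarrangement explicit.
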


\begin{proof}
  Let $x_0 \in \R^n$ be a generic point with respect to $\A$, and let $\M$ be a Euclidean matching on $\sal(\A)$ with base point $x_0$.
  Let $X \in \L_k$ be a flat of codimension $k$.
  By Lemma \ref{lemma:generic-point-subarrangement}, the point $x_0$ is generic also with respect to the subarrangement $\A_X$.
  Consider the inclusion $j_X \colon \sal(\A_X) \hookrightarrow \sal(\A)$ around the unique face of $\A$ containing the projection $\rho_X(x_0)$.
  Let $\M_X$ be a Euclidean matching on $\sal(\A_X)$ with base point $x_0$.
  
  All homology groups in this proof are with integer coefficients.
  By Theorem \ref{thm:minimality}, we have that $H_k(\sal(\A))$ is a free abelian group generated by elements of the form
  \[ [ \cell{C,F_C} + \text{a finite sum of non-critical $k$-cells}] \]
  for each critical $k$-cell $\cell{C,F_C}$ of $\sal(\A)$.
  Similarly, for every flat $X \in \L_k$, we have that $H_k(\sal(\A_X))$ is a free abelian group generated by elements of the same form as above, one for every critical $k$-cell of $\sal(\A_X)$.
  The critical $k$-cells of $\sal(\A_X)$ are in bijection (through the map $j_X$) with the critical $k$-cells $\cell{C,F}$ of $\sal(\A)$ such that $|F| = X$.
  Then the inclusions $j_X$ induce an isomorphism
  \[ \bar\theta_k \colon \bigoplus_{X \in \L_k} H_k(\sal(\A_X)) \to H_k(\sal(\A)). \]
  
  Let $\varphi\colon \sal(\A) \xhookrightarrow{\simeq} M(\A)$ and $\varphi_X\colon \sal(\A_X) \xhookrightarrow{\simeq} M(\A_X)$ be the homotopy equivalences constructed in \cite{salvetti1987topology}.
  Then the composition
  \[ \bigoplus_{X \in \L_k} H_k(M(\A_X)) \xrightarrow{\bigoplus (\varphi_X)_*^{-1}} \bigoplus_{X \in \L_k} H_k(\sal(\A_X)) \xrightarrow{\bar \theta_k} H_k(\sal(\A)) \xrightarrow{\varphi_*} H_k(M(\A)) \]
  is the isomorphism $\theta_k$ as in the statement.
  
  By naturality of Salvetti's construction, the following diagram is commutative up to homotopy.
  \begin{center}
  \begin{tikzcd}
    \sal(\A_X)
    \arrow[r, hook, "j_X"]
    \arrow[d, hook', "\varphi_X"]
    & \sal(\A) \arrow[d, hook', "\varphi"]
    \\
    M(\A_X)
    & M(\A) \arrow[l, hook', "i_X"]
  \end{tikzcd}
  \end{center}
  Looking at the induced commutative diagram in homology, we obtain that the inverse isomorphism $\theta_k^{-1}$ is induced by the inclusion maps $i_X$.
\end{proof}

If we fix a flat $X \in \L(\A)$, it is possible to choose the base point $x_0$ so that the Morse complex of $\sal(\A_X)$ injects into the Morse complex of $\sal(\A)$.
We prove this naturality property in the following lemma.

\begin{lemma}
  Let $X \in \L(\A)$ be a flat, and fix an inclusion $j \colon \sal(\A_X) \hookrightarrow \sal(\A)$ around some face $\bar F$ with $|\bar F| = X$.
  There exist Euclidean matchings $\M_X$ and $\M$, on $\sal(\A_X)$ and $\sal(\A)$ respectively, such that:
  \begin{enumerate}[(i)]
    \item they share the same base point $x_0$;
    \item $(j \times j)(\M_X) \subseteq \M$;
    \item the inclusion $j$ induces an inclusion of the Morse complex of $\sal(\A_X)$ into the Morse complex of $\sal(\A)$.
  \end{enumerate}
\end{lemma}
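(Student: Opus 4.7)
The plan is to choose a single generic base point $x_0$ shared by both matchings, placed so close to a point $p$ in the relative interior of $\bar F$ that the inclusion $j$ becomes a local identification of the critical structures of $\A$ with those of $\A_X$ near $\bar F$.

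Fix $p \in \mathrm{relint}(\bar F)$. By local finiteness of $\A$ there is a neighbourhood $U$ of $p$ meeting no hyperplane of $\A \setminus \A_X$, so $\A$ and $\A_X$ induce identical stratifications on $U$. Only finitely many chambers of $\A$ meet a small ball around $p$, so the minimum distance $\delta > 0$ from $p$ to any chamber of $\A$ not containing $\bar F$ in its closure is positive. I pick $x_0$ generic for $\A$ (dense by Lemma \ref{lemma:eucpt}) with $\rho_X(x_0) \in \mathrm{relint}(\bar F)$ and $|x_0 - \rho_X(x_0)|$ so small that $|x_0 - \rho_X(x_0)| < \delta/2$ and $\rho_D(x_0) \in U$ for every $D \in \C(\A_X)$ (possible because each such $D$ contains $p$ in its closure). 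By Lemma \ref{lemma:generic-point-subarrangement}, $x_0$ is also generic for $\A_X$. For each $D \in \C(\A_X)$ let $\iota(D) \in \C(\A)$ be the unique chamber of $\A$ contained in $D$ with $\bar F \subseteq \iota(D)$; then $\iota$ bijects $\C(\A_X)$ onto the chambers of $\A$ whose closure contains $\bar F$. Since $\A$ and $\A_X$ agree on $U$, we obtain $\rho_D(x_0) \in \iota(D)$, hence $\rho_{\iota(D)}(x_0) = \rho_D(x_0)$, and the smallest face $F_{\iota(D)}$ of $\A$ through this point has the same dimension as the smallest face $F_D$ of $\A_X$ through it, contains $p$ (hence $\bar F$), is contained in $F_D$, and satisfies $|F_{\iota(D)}| = |F_D|$. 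Thus $X_{\iota(D)} = X_D$, and $j(\cell{D, F_D}) = \cell{\iota(D), F_{\iota(D)}}$.

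Build $\M_X$ from any Euclidean order $\euprime$ of $\C(\A_X)$ based at $x_0$, and $\M$ from a Euclidean order $\eu$ of $\C(\A)$ whose initial segment lists $\iota(D_0) \eu \iota(D_1) \eu \cdots$ in the order of $\euprime$, followed by all remaining chambers of $\A$: this is compatible with distances because $d(x_0, \iota(D)) = d(x_0, D) < \delta/2$ whereas $d(x_0, C) > \delta/2$ for every $C$ not of the form $\iota(D)$. By Lemma \ref{lemma:NC} combined with $X_D = X_{\iota(D)}$, the inclusion $j$ satisfies $j(N(D)) \subseteq N(\iota(D))$ for every $D$. The non-empty fibers of $\eta_D\colon N(D) \to \C(\A_X)$ are in bijection with those fibers of $\eta_{\iota(D)}\colon N(\iota(D)) \to \C(\A)$ of the form $\eta_{\iota(D)}^{-1}(\iota(E))$, and by Corollary \ref{corollary:visible} together with Lemma \ref{lemma:visible-faces}, corresponding fibers identify with the same poset of visible faces of the same polytope in $X_D = X_{\iota(D)}$ (using the local agreement of the two arrangements near each $\rho_D(x_0)$). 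Picking the shellings employed in Theorem \ref{teo:fiber} consistently on corresponding fibers, the Patchwork matching on $\sal(\A)$ restricts on $j(\sal(\A_X))$ to the image under $j$ of the Patchwork matching on $\sal(\A_X)$. This establishes (i) and (ii); (iii) then follows from (ii) and Theorem \ref{thm:incidence}, since the Morse complex is determined by critical cells and alternating paths, and both are preserved by the injective cellular map $j$.

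The main technical obstacle is the coherent choice of shellings across the two arrangements. For corresponding fibers one must carry out the bounded-chamber construction of Lemma \ref{lemma:unb-bounded} in parallel for $\A$ and for $\A_X$; this is possible because the auxiliary hyperplanes added there live far from the projection points $\rho_D(x_0)$, where the two arrangements agree, so a single choice of auxiliary hyperplanes (and of a compatible line-shelling from $y_{C'}$) works simultaneously for $\A$ and for $\A_X$.
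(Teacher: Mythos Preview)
Your argument is essentially the paper's: choose a generic $x_0$ close enough to $\bar F$ that the chambers of $\A$ containing $\bar F$ form the initial segment of a Euclidean order, verify that $j$ carries each $N(D)$ into $N(\iota(D))$ and each fiber of $\eta'$ onto a fiber of $\eta$, extend $\M_X$ to a Euclidean matching $\M \supseteq j(\M_X)$, and conclude (iii) from the fact that alternating paths starting in the subcomplex $j(\sal(\A_X))$ never leave it.

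One point should be sharpened. You claim that corresponding fibers are ``the same poset of visible faces of the \emph{same} polytope,'' and then devote the final paragraph to synchronizing shellings and auxiliary hyperplanes. But the contracted arrangements $\A^{X_D}$ and $\A_X^{X_D}$ generally differ --- any $H \in \A \setminus \A_X$ that meets $X_D$ contributes to the former only --- so the chambers $\iota(E)\cap X_D$ and $E\cap X_D$ need not coincide as polytopes. The cleaner route, which the paper takes, is to bypass the visible-faces picture and check directly from Lemma~\ref{lemma:fiber} that $\eta^{-1}(\iota(D),\iota(E)) = j\bigl(\eta'^{-1}(D,E)\bigr)$: since $\iota(D)$ and $\iota(E)$ both contain $\bar F$ we have $s(\iota(D),\iota(E)) \subseteq \A_X$, so every face $F$ in the left-hand fiber has $\supp(F) \subseteq \A_X$ and hence (being a face of $\iota(E)$) contains $\bar F$, which places it in the image of $j$. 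With this abstract poset identification, any matching on the $\A_X$-fiber transports via $j$ to the $\A$-fiber, and the remaining fibers of $\eta$ (those disjoint from $j(\sal(\A_X))$) are matched independently; no comparison of polytopes or shellings is required.
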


\begin{proof}
  Let $x_0 \in \R^n$ be a generic point such that $d(x_0, \bar F) < d(x_0, H)$ for every hyperplane $H \in \A \setminus \A_X$ (the existence of $x_0$ follows from Lemma \ref{lemma:eucpt}).
  For example, we can choose a point $y$ in the relative interior of $\bar F$, and then take $x_0$ in a small neighborhood of $y$.
  
  Let $\eu$ and $\euprime$ be Euclidean orders with base point $x_0$ on $\C(\A)$ and $\C(\A_X)$, respectively.
  Notice that, by construction of $x_0$, the total order $\eu$ starts with the chambers containing $\bar F$.
  
  Let $\eta\colon \sal(\A) \to \C(\A) \times \C(\A)$ be the poset map defined in the proof of Theorem \ref{thm:matching}, induced by the total order $\eu$.
  Let $\eta'\colon \sal(\A_X) \to \C(\A_X) \times \C(\A_X)$ be the analogous poset map for the arrangement $\A_X$, induced by the total order $\euprime$.
  Then, for every pair of chambers $C,E \in \C(\A)$ containing $\bar F$, we have
  \[ \eta^{-1}(C,E) = j( {\eta'}^{-1} (\pi_X(C), \pi_X(E))). \]
  In other words, the inclusion $j$ maps fibers of $\eta'$ to fibers of $\eta$.
  Notice that, by Remark \ref{rmk:matching-independent}, these fibers only depend on $x_0$ and not on the particular choices of the Euclidean orders $\eu$ and $\euprime$.
  
  Let $\M_X$ be a Euclidean matching on $\sal(\A_X)$ with base point $x_0$.
  Recall that such a matching is constructed on the fibers of $\eta$ (see Definition \ref{def:euclidean-matching}).
  Then there exists a Euclidean matching $\M$ on $\sal(\A)$ with base point $x_0$ that contains $(j \times j)(\M_X)$.
  
  The alternating paths in $\sal(\A)$ starting from cells in the subcomplex $j(\sal(\A_X))$ remain in this subcomplex.
  Therefore $j$ induces an inclusion of the Morse complex of $\sal(\A_X)$ (with respect to the matching $\M_X$) into the Morse complex of $\sal(\A)$ (with respect to the matching $\M$).
\end{proof}

 \section{Local system homology of line arrangements}
\label{sec:homology}

In the case of a line arrangement in $\R^2$ it is possible to explicitly describe alternating paths between critical cells of a Euclidean matching.
As an application, in this section we are going to describe the algebraic Morse complex that computes the homology of the complement $M(\A)$ with coefficients in an abelian local system.
Then we are going to compare the obtained complex with the algebraic complex of Gaiffi and Salvetti \cite{gaiffi2009morse}, which is based on the polar matching of Salvetti and Settepanella \cite{salvetti2007combinatorial}.

Let $\A$ be a locally finite line arrangement in $\R^2$.
An abelian local system $L$ on $M(\A)$ is determined by the elements $t_\ell \in \Aut(L)$ associated to elementary positive loops around every line $\ell \in \A$ (cf.\ \cite[Section 2.4]{gaiffi2009morse}).
The boundaries $\partial_i$ of the algebraic Morse complex are determined by the incidence numbers $[\cell{D,G}, \cell{C,F}]^\M \in \Z[t_\ell^{\pm 1}]_{\ell \in \A}$, between critical $i$-cells $\cell{D,G}$ and critical $(i-1)$-cells $\cell{C,F}$, in the Morse complex.

We refer to \cite[Section 5]{salvetti2007combinatorial} for a detailed explanation of how to compute these incidence numbers, given an acyclic matching on the Salvetti complex $\sal(\A)$.
We only make the following substantial change of convention with respect to \cite{salvetti2007combinatorial, gaiffi2009morse}: given a cell $\cell{C,F}$, we choose as its representative point the $0$-cell $\cell{C^F,C^F}$, where $C^F$ is the chamber opposite to $C$ with respect to $F$ (the role of the representative point is thoroughly described in \cite[Section 9]{steenrod1943homology}).
It is more convenient to choose $\cell{C^F,C^F}$ instead of $\cell{C,C}$, because in this way two matched cells have the same representative point.

We recall some useful definitions and facts from \cite[Chapter 5]{salvetti2007combinatorial}, adapting them to our different convention on the representative point.
Given two chambers $D$ and $C$, denote by $u(D,C)$ a combinatorial positive path of minimal length from $\cell{D,D}$ to $\cell{C,C}$, in the $1$-skeleton of $\sal(\A)$.
In particular, let $\Gamma(C) = u(C, C_0)$ be a minimal positive path from the chamber $C$ to a base chamber $C_0$.
Every path $u(D,C)$ crosses each line at most once by \cite[Lemma 5.1]{salvetti2007combinatorial}.
Consider the closed path $\Gamma(D)^{-1} \, u(D,C) \, \Gamma(C)$, which starts from $C_0$, passes through $D$ and $C$, and then goes back to $C_0$.
This path determines an element $\bar u(D,C) \in H_1(M(\A))$ which is equal to the product of the positive loops around the lines in $s(C_0,C) \cap s(D,C)$.
Then the incidence number $[\cell{D,G}, \cell{C,F}] \in \Z[t_\ell^{\pm 1}]_{\ell \in \A}$ between an $i$-cell $\cell{D,G}$ and an $(i-1)$-cell $\cell{C,F}$ in $\sal(\A)$ is given by
\begin{equation*}
  [\cell{D,G}: \cell{C,F}]= [\cell{D,G}: \cell{C,F}]_{\Z} \; \bar{u}(D^G,C^F),
\end{equation*}
where $[\cell{D,G} : \cell{C,F}]_{\Z} = \pm 1$ denotes the integral incidence number in $\sal(\A)$.

Let $x_0 \in \R^2$ be a generic point with respect to the line arrangement $\A$, and fix a Euclidean matching $\M$ on the Salvetti complex $\sal(\A)$ with base point $x_0$.
Let $C_0$ be the chamber containing $x_0$ (this is the first chamber in any Euclidean order with base point $x_0$).
Recall that the matching $\M$ is constructed on the fibers of the map $\eta \colon \sal(\A) \to \C \times \C$.

To compute the algebraic Morse complex (see \cite[Definition 11.23]{kozlov2007combinatorial}), we first need to describe the alternating paths between critical cells.
The alternating paths between a critical $1$-cell $\cell{C,F}$ and the only critical $0$-cell $\cell{C_0,C_0}$ are particularly simple, since all the $0$-cells are in $N(C_0)$.

\begin{lemma}\label{Lemma:1path}
  Let $\cell{C,F}$ be a critical $1$-cell.
  Denote by $C'$ the unique chamber containing $F$ other than $C$.
  There are exactly two alternating paths from $\cell{C,F}$ to the only critical $0$-cell $\cell{C_0,C_0}$:
  \begin{itemize}
    \item $\cell{C,F} \searrow \cell{C,C} \nearrow \cell{C',F} \searrow \cell{C',C'} \nearrow \dots \searrow \cell{C_0,C_0}$
    \item $\cell{C,F} \searrow \cell{C',C'} \nearrow \dots \searrow \cell{C_0,C_0}$
  \end{itemize}
  (after $\cell{C',C'}$, they continue in the same way).
\end{lemma}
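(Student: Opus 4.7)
The plan is to establish the lemma in three steps: enumerate the 0-cells in the boundary of $\cell{C, F}$, argue that any alternating path starting from a 0-cell is forced, and identify the specific matching partner of $\cell{C, C}$.

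For the first step, a 0-cell $\cell{D, D}$ is in the boundary of $\cell{C, F}$ iff $D$ is a chamber containing the codim-$1$ face $F$ (the Salvetti condition $C.D = D$ is automatic since $|D| = \R^n$). There are exactly two such chambers, $C$ and $C' = C^F$, giving the two initial moves $\cell{C, F} \searrow \cell{C, C}$ and $\cell{C, F} \searrow \cell{C', C'}$. For the second step, observe that a non-critical 0-cell $\cell{D, D}$ has a unique matching partner, a 1-cell $\cell{D^{G_D}, G_D}$ for some codim-$1$ face $G_D$ of $D$. This 1-cell has exactly two 0-cells in its boundary, namely $\cell{D, D}$ (reached only via the matching edge) and $\cell{D^{G_D}, D^{G_D}}$, so the unique non-matching outgoing edge leads to $\cell{D^{G_D}, D^{G_D}}$. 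Hence the alternating path continues with no choices, and by properness (Lemma~\ref{lemma:alternating}) it is finite and must terminate at a critical 0-cell. Since $\cell{C_0, C_0}$ is the only critical 0-cell (Theorem~\ref{thm:euclidean-matching}), both paths must end at $\cell{C_0, C_0}$.

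The crux is step three: showing that $\cell{C, C}$ is matched with $\cell{C', F}$. The 0-cell $\cell{C, C}$ lies in $N(C_0) = S(C_0)$ and belongs to the fiber $\eta_{C_0}^{-1}(C)$, since $\eta_{C_0}(\cell{C, C}) = C^C = C$. By Corollary~\ref{corollary:visible} together with Lemma~\ref{lemma:visible-faces}, this fiber is isomorphic to the poset of faces of $C$ visible from $x_0$, with $\cell{C, C}$ corresponding to the polytope $C$ itself. The matching from Theorem~\ref{teo:fiber} pairs this polytope with the last facet of a shelling in which visible facets come last. I would exhibit a line-shelling (Theorem~\ref{thm:line-shelling}) along a line from $x_0$ through $\rho_C(x_0) \in F$: such a shelling encounters $F$ as the first visible facet, so after reversing (Lemma~\ref{lemma:reverse-shelling}) $F$ becomes the last facet. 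Hence $\cell{C, C}$ is matched with $\cell{C^F, F} = \cell{C', F}$.

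Combining the three steps, the alternating path starting $\cell{C, F} \searrow \cell{C, C}$ continues with $\nearrow \cell{C', F} \searrow \cell{C', C'}$, the second $\searrow$ landing on $\cell{C', C'}$ because the only other boundary 0-cell of $\cell{C', F}$, namely $\cell{C, C}$, sits on the matching edge just traversed; from $\cell{C', C'}$ the path proceeds deterministically to $\cell{C_0, C_0}$. The path starting $\cell{C, F} \searrow \cell{C', C'}$ reaches $\cell{C', C'}$ directly and continues by the same deterministic rule, so the two paths agree from $\cell{C', C'}$ onward. The main obstacle is step three: one must verify that the Euclidean matching can be (and is) built using shellings that place the critical face $F_C$ last on the fiber at each chamber $C$, relying on the geometric fact that the projection direction from $x_0$ to $\rho_C(x_0)$ defines a natural line-shelling with $F_C$ as its initial visible facet.
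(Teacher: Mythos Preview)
Your steps one and two match the paper's proof. The difference is step three, where you overshoot.

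The paper does not trace through the shelling construction. It simply observes that $\cell{C,C}$ and $\cell{C',F}$ are the \emph{only} cells in the fiber $\eta^{-1}(C_0,C)$. This follows from Corollary~\ref{corollary:visible-unrestricted} (or equivalently Lemma~\ref{lemma:visible-faces}): the fiber is indexed by the faces of $C$ visible from $x_0$, and since $\rho_C(x_0)$ lies in the relative interior of the edge $F$ (this is what $\codim F_C=1$ means for a critical $1$-cell), $F$ is the unique visible facet of $C$, hence the only visible faces are $C$ and $F$. A fiber with two cells and no critical cell admits exactly one perfect matching, so $\cell{C,C}$ is matched with $\cell{C',F}$ regardless of which shelling was used in Theorem~\ref{teo:fiber}.

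This dissolves what you flag as the ``main obstacle.'' Your concern is legitimate for your argument as written: Definition~\ref{def:euclidean-matching} only requires the matching to be homogeneous with respect to $\eta$ and to have the prescribed critical cells; it does not force the matching on each fiber to come from any particular line shelling, so appealing to a specific shelling order does not prove anything about an arbitrary Euclidean matching. The paper's two-cell observation makes the shelling irrelevant here and closes the gap in one line.
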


\begin{proof}
  Since $\cell{C,F}$ is critical, the line $|F|$ separates $C$ and $C_0$.
  In the boundary of the $1$-cell $\cell{C,F}$ there are the two $0$-cells $\cell{C,C}$ and $\cell{C',C'}$.
  The $0$-cell $\cell{C,C}$ is matched with the $1$-cell $\cell{C',F}$, because these are the unique cells in the fiber $\eta^{-1}(C_0, C)$.
  Then an alternating path starting with $\cell{C,F} \searrow \cell{C,C}$ is forced to continue with $\nearrow \cell{C',F} \searrow \cell{C',C'}$.
  After $\cell{C',C'}$ there is exactly one way to continue the path, because every non-critical $0$-cell is matched with some $1$-cell, and this $1$-cell has exactly one other $0$-cell in the boundary.
  Since the matching is proper, one such path must eventually reach the critical $0$-cell $\cell{C_0,C_0}$.
\end{proof}

We can use the previous lemma to compute the boundary $\partial_1$.
The resulting formula coincides with the one of \cite[Proposition 4.1]{gaiffi2009morse}.

\begin{proposition}
  The incidence number between a critical $1$-cell $\cell{C,F}$ and the only critical $0$-cell $\cell{C_0,C_0}$ in the Morse complex is given by
  \[ [\cell{C,F} : \cell{C_0, C_0}]^\M = (1-t_{\,|F|}). \]
\end{proposition}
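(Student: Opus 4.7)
The plan is to apply Theorem~\ref{thm:incidence} to the two alternating paths $\gamma_1, \gamma_2$ provided by Lemma~\ref{Lemma:1path} and to reorganize $m(\gamma_1) + m(\gamma_2)$ by factoring out the common tail $T_0$ running from $\cell{C', C'}$ down to $\cell{C_0, C_0}$. Since both paths pass through the non-critical $0$-cell $\cell{C', C'}$ and then follow the same matched flow to $\cell{C_0, C_0}$, this factorization yields
\[
m(\gamma_1) + m(\gamma_2) \,=\, T \cdot \Bigl([\cell{C,F} : \cell{C',C'}] - [\cell{C,F} : \cell{C,C}]\,[\cell{C',F} : \cell{C,C}]\,[\cell{C',F} : \cell{C',C'}]\Bigr),
\]
where $T$ absorbs the contributions of $T_0$ together with the overall $(-1)^{k_0}$ sign, and the relative minus sign between the two head contributions appears because $\gamma_1$ has one more match than $\gamma_2$. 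The goal is then to show that $T = \pm 1$ and that the bracket equals $\pm(1-t_{|F|})$, with signs combining to $+(1-t_{|F|})$.

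For the bracket, I would evaluate the four incidence numbers using the formula $[\cell{D,G} : \cell{C,F}] = [\cell{D,G} : \cell{C,F}]_\Z \cdot \bar{u}(D^G, C^F)$. Since $\cell{C,F}$ is critical, by the proof of Lemma~\ref{Lemma:1path} the hyperplane $|F|$ separates $C$ from $C_0$ but not $C'$ from $C_0$; hence $\bar{u}(C', C) = t_{|F|}$, while $\bar{u}(C', C')$, $\bar{u}(C, C)$, and $\bar{u}(C, C')$ are all trivial. Under the standard convention that assigns opposite integer incidences to the two boundary $0$-cells of a $1$-cell, a direct expansion produces the bracket $\pm(1 - t_{|F|})$.

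For the tail factor $T$, the crucial observation is that every incidence number appearing in $T_0$ has trivial local-system component. Each elementary step $\cell{D,D} \nearrow \cell{D^G, G} \searrow \cell{D^G, D^G}$ of $T_0$ crosses a wall $|G|$ that, by construction of the matching on the fiber $\eta^{-1}(C_0, D)$ (Corollary~\ref{corollary:visible}), belongs to $s(C_0, D)$; the opposite chamber $D^G$ therefore lies on the same side of $|G|$ as $C_0$, so $|G| \notin s(C_0, D^G)$ and $\bar{u}(D, D^G) = 1$. Together with $\bar{u}(D, D) = 1$, this makes every factor of $T$ an integer $\pm 1$, so $T \in \{\pm 1\}$.

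The most delicate part of the argument will be pinning down the global sign, because three sources must combine consistently: the integer incidences on the Salvetti complex, the $(-1)^k$ prefactors in Theorem~\ref{thm:incidence}, and the distinction between the two ``parallel'' $1$-cells $\cell{C,F}$ and $\cell{C',F}$ associated with the wall $|F|$. A convenient sanity check is the case $C' = C_0$, where $T_0$ is empty, $T = 1$, and only four explicit incidence numbers enter the formula, yielding $1 - t_{|F|}$ directly. Verifying that the general-tail sign analysis reproduces this base case—thanks to the fact that each matched $1$-cell $\cell{D^G, G}$ in $T_0$ is oriented consistently from $D^G$ toward $D$ so that the signs along the tail telescope—then completes the proof.
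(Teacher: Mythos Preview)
Your plan is correct and follows essentially the same route as the paper: both invoke the two alternating paths of Lemma~\ref{Lemma:1path} and sum their contributions. The paper's version is shorter because its orientation convention $[\cell{\tilde C,\tilde F}:\cell{\tilde C^{\tilde F},\tilde C^{\tilde F}}]_\Z=1$ together with the representative-point convention forces every matched tail incidence to equal $+1$ and every subsequent down-step to equal $-1$ with trivial local-system factor, so the common tail contributes $(-1)^{k_0}(-1)^{k_0}=1$ and the Morse incidence collapses immediately to $[\cell{C,F}:\cell{C,C}]+[\cell{C,F}:\cell{C',C'}]=-t_{|F|}+1$; carrying your telescoping argument through with these conventions gives exactly $T=1$ and bracket $=1-t_{|F|}$, resolving the sign ambiguity you flag.
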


\begin{proof}
The orientation of a $1$-cell $\cell{\tilde{C},\tilde{F}}$ is defined so that $[\cell{\tilde{C}, \tilde{F}}, \cell{\tilde C^{\tilde F}, \tilde C^{\tilde F}}]_\Z = 1$.
Now, if $\cell{\tilde{C},\tilde{F}} \in N(C_0)$, then $\tilde{C}$ is closer to $C_0$ with respect to $\tilde{C}^{\tilde{F}}$ and so we have that:
\begin{equation*}
 [\cell{\tilde{C},\tilde{F}},\cell{\tilde{C},\tilde{C}}] = -1; \qquad [\cell{\tilde{C},\tilde{F}}:\cell{\tilde{C}^{\tilde{F}},\tilde{C}^{\tilde{F}}}] = 1.
\end{equation*}

By Lemma \ref{Lemma:1path} we see that there are exactly two alternating paths between $\cell{C,F}$ and $\cell{C_0,C_0}$, and by  \cite[Definition 11.23]{kozlov2007combinatorial} the incidence number in the Morse complex is given by
\begin{equation*}
  [\cell{C,F} : \cell{C_0, C_0}]^\M = [\cell{C,F}:\cell{C,C}]+[\cell{C,F}:\cell{C',C'}].
\end{equation*}
Since $|F| \in s(C_0,C) \cap s(C^F,C)$, the first term is
\begin{equation*}
  [\cell{C,F}:\cell{C,C}]=[\cell{C,F}:\cell{C,C}]_{\Z} \bar{u}(C^F,C)=-t_{|F|},
\end{equation*}
The second term is given by
\begin{equation*}
  [\cell{C,F}:\cell{C',C'}]=[\cell{C,F}:\cell{C',C'}]_{\Z} \bar{u}(C^F,C')=\bar{u}(C',C')=1. \qedhere
\end{equation*}                                                                                                                                                                                                   
\end{proof}

Now we want to compute the boundary $\partial_2$.
To simplify the notation, denote a $2$-cell $\cell{D,\{p\}}$ also by $\cell{D,p}$, where $p \in \R^2$ is the intersection point of two or more lines of $\A$.

It is convenient to assign the orientation of the $2$-cells so that they behave well with respect to the matching.
Given a $2$-cell  $\cell{D,p} \not\in N(C_0)$,  we choose the orientation in the following way.
Let $\ell, \ell'$ be the two walls of $D$ that pass through $p$.
Let $\ell$ be the one that does not separate $D$ from $C_0$ if it exists, or otherwise the closest one to $x_0$.
Then the orientation of $\cell{D,p}$ is the one for which $[\cell{D,p}, \cell{D,\ell}]_\Z = 1$.
The orientation of the $2$-cells in $N(C_0)$ is assigned arbitrarily. 
The reason of this choice is that the incidence number between two matched cells is always $+1$.
Indeed, if $C'$ is the chamber such that $X_{C'}=\ell'$, then $\cell{D,p} \in N(C')$ by construction.

We are going to show that there is a correspondence between alternating paths from critical $2$-cells to critical $1$-cells and certain sequences of elements of $\L_1(\A)$.
Consider an alternating path of the form
\begin{equation}\label{eq:AltPath}
  \cell{D,p} \searrow \cell{C_1,F_1} \nearrow \cell{D_1,p_1} \searrow \cell{C_2,F_2} \nearrow \dots \searrow \cell{C_n,F_n},
\end{equation}
where $\cell{D,p}$ is a critical $2$-cell and $\cell{C_n,F_n}$ is a critical $1$-cell.
By construction of the matching, none of the cells in \eqref{eq:AltPath} belongs to $N(C_0)$.
We have that the starting cell $\cell{D,p}$ and the sequence $(F_1,\ldots,F_n)$ uniquely determine the alternating path.
This is because for each $i$ there are only two cells of the form $\cell{C',F_i}$ for some $C' \in \C$, and one of these cells is in $N(C_0)$.
Thus $C_i$ is uniquely determined by $F_i$ for every $i$, and $\cell{D_i, p_i}$ is the cell matched with $\cell{C_i, F_i}$.

We are now going to describe which sequences in $\L_1(\A)$ give rise to an alternating path.
Given a face $F \in \L_1(\A)$, let $\ell = |F|$.
If $\rho_\ell(x_0)\notin F$, we denote by $p(F)$ the endpoint of $F$ which is closer to $\rho_\ell(x_0)$.
In addition, let $C(F)$ be the unique chamber containing $F$ such that $\cell{C(F),F} \notin N(C_0)$.

\begin{definition}
  \label{Def:valid-sequence}
  Given two different faces $F,\, G \in \L_1(\A)$, we write $F \to G$ if
  \begin{itemize}
    \item $F \cap G = \{p(F)\}$;
    \item $|F|=|G|$, or $F$ and $C_0$ lie in the same half-plane with respect to $|G|$. 
  \end{itemize}
\end{definition}

\begin{lemma}\label{Lemma:sequence}
  Let $\cell{D,p}$ be a critical $2$-cell and $\cell{C,F}$ a critical $1$-cell.
  The alternating paths between $\cell{D,p}$ and $\cell{C,F}$ are in one to one correspondence with the sequences in $\L_1(\A)$ of the form $(F_1 \to F_2 \to \ldots \to F_n=F)$ such that $\cell{C(F_1),F_1} < \cell{D,p}$.
\end{lemma}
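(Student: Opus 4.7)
The proof follows the observation already noted in the excerpt: an alternating path is determined by its initial cell $\cell{D,p}$ together with the sequence $(F_1,\dots,F_n)$ of 1-faces it visits, so the task reduces to characterizing which sequences arise. First I would decode each matching step $\cell{C_i,F_i}\nearrow\cell{D_i,p_i}$ for $i<n$. Since $\cell{C_i,F_i}$ is non-critical, it lies in some $N(C')$ with $C'\neq C_0$; by Lemma \ref{lemma:NC} this gives $F_i\subseteq X_{C'}$, and since $\dim F_i=1$ we must have $X_{C'}=|F_i|=:\ell_i$. Thus $\cell{C_i,F_i}\notin N(C_0)$, which forces $C_i=C(F_i)$; the same conclusion holds for the terminal critical cell $\cell{C_n,F_n}=\cell{C,F}$, since it too lies outside $N(C_0)$. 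The matched 2-cell is pinned down by Corollary \ref{corollary:visible}: the fiber $\eta^{-1}(C',E_i)$ identifies with the poset of faces of $E_i\cap\ell_i$ visible from any point $y_{C'}$ in the interior of $F_{C'}\subset\ell_i$. In one dimension a polytope has exactly two visible faces from an external point (itself and its nearer endpoint), so $\cell{C_i,F_i}$ is matched with $\cell{D_i,p_i}$ where $p_i$ is the endpoint of $F_i$ closer to $y_{C'}$. Because $\rho_{\ell_i}(x_0)$ lies in the interior of $F_{C'}$, this endpoint is $p(F_i)$, and $D_i=E_i^{p_i}=(C_i^{F_i})^{p_i}$.

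Next, the descent $\cell{D_i,p_i}\searrow\cell{C_{i+1},F_{i+1}}$ requires $p_i$ to be an endpoint of $F_{i+1}$ and $C_{i+1}=D_i.F_{i+1}$. Together with $F_{i+1}\neq F_i$, the first fact yields $F_i\cap F_{i+1}=\{p(F_i)\}$, which is the first clause of Definition \ref{Def:valid-sequence}. The second clause captures the equivalent constraint $C_{i+1}=C(F_{i+1})$, which must hold so that $\cell{C_{i+1},F_{i+1}}$ lies outside $N(C_0)$ and can therefore be either matched or critical. To establish this equivalence I would argue by sides of lines: $C_i=C(F_i)$ is on the opposite side of $\ell_i$ from $C_0$, hence $E_i=C_i^{F_i}$ is on the same side as $C_0$, and $D_i=E_i^{p_i}$ lies on the opposite side of every line through $p_i$ from $E_i$. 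Setting $\ell_{i+1}=|F_{i+1}|$: if $\ell_{i+1}=\ell_i$ then $D_i$ is automatically on the opposite side of $\ell_{i+1}$ from $C_0$, so $C_{i+1}=D_i.F_{i+1}=C(F_{i+1})$ unconditionally. If $\ell_{i+1}\neq\ell_i$, then $F_i$ (and hence $E_i$, which contains it) lies on one definite side of $\ell_{i+1}$—the side reached by leaving $p_i$ along $F_i$—while $D_i$ lies on the opposite side; therefore $D_i$ is on the opposite side of $\ell_{i+1}$ from $C_0$ exactly when $F_i$ and $C_0$ are on the same side of $\ell_{i+1}$, which is the second clause of Definition \ref{Def:valid-sequence}.

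Finally, the initial step $\cell{D,p}\searrow\cell{C_1,F_1}$ translates to $\cell{C_1,F_1}<\cell{D,p}$ with $C_1=C(F_1)$, giving exactly the hypothesis $\cell{C(F_1),F_1}<\cell{D,p}$. The converse is obtained by reversing the argument: given any sequence $F_1\to\cdots\to F_n=F$ satisfying the conditions, set $C_i:=C(F_i)$ and let $\cell{D_i,p_i}$ be the 2-cell matched with $\cell{C_i,F_i}$—which exists and is non-critical, because the well-definedness of $p(F_i)$ already implies $\rho_{\ell_i}(x_0)\notin F_i$, so $F_i\neq F_{C_i}$. The boundary relations $\cell{D_i,p_i}\succ\cell{C_{i+1},F_{i+1}}$ then hold by the side-of-line computation above, verifying that the resulting sequence of cells is a genuine alternating path. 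The principal obstacle is precisely this side-of-line bookkeeping, since it is where the disjunction in Definition \ref{Def:valid-sequence} originates and where one must carefully track how $D_i=E_i^{p_i}$ is diagonally opposite to $E_i$ across the vertex $p_i$, flipping side with respect to every line through $p_i$.
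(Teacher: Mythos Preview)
Your argument is correct and follows essentially the same route as the paper's proof: identify $C_i=C(F_i)$, describe the matched $2$-cell as $\cell{D_i,p(F_i)}$ with $D_i=(C_i^{F_i})^{p(F_i)}$, and then translate the boundary relation $\cell{D_i,p_i}\succ\cell{C_{i+1},F_{i+1}}$ into the two clauses of Definition~\ref{Def:valid-sequence} by tracking sides of the line $|F_{i+1}|$. Your derivation of $p_i=p(F_i)$ via the one-dimensional visible-face description of the fiber is a bit more explicit than the paper's, and your case split $\ell_{i+1}=\ell_i$ versus $\ell_{i+1}\neq\ell_i$ makes the origin of the disjunction in Definition~\ref{Def:valid-sequence} clearer, but the underlying mechanism is identical.
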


\begin{proof}
  Consider an alternating path as in \eqref{eq:AltPath}.
  We have already seen that such a path is completely determined by the starting cell $\cell{D,p}$ and by the sequence $(F_1,\ldots,F_n)$.
  Clearly the condition $\cell{C(F_1),F_1} < \cell{D,p}$ must be satisfied.
  We want to show that $F_i \to F_{i+1}$ for each $i=1,\dots,n-1$. 
 
  Let $E_i$ be the chamber opposite to $C(F_i)$ with respect to $F_i$.
  By construction of the matching, it is immediate to see that the cell $\cell{C(F_i),F_i}$ is matched with $\cell{D(F_i),p(F_i)}$, where $D(F_i)$ is the chamber opposite to $E_i$ with respect to $p(F_i)$.
  By hypothesis we have that $\cell{C(F_{i+1}),F_{i+1}} < \cell{D(F_i),p(F_i)}$ which implies that $F_i \cap F_{i+1} = \{p(F_i)\}$ and that $D(F_i).F_{i+1}=C(F_{i+1})$.
  Since $\cell{C(F_{i+1}),F_{i+1}} \notin N(C_0)$, we have that $C_0$ and $C(F_{i+1})$ are in opposite half-planes with respect to $|F_{i+1}|$.
  The same is true for $F_i$ and $C(F_{i+1})$, because $D(F_i)$ and $F_i$ are in opposite half-planes with respect to $|F_{i+1}|$, unless $F_i \subset |F_{i+1}|$.
  Then we have that $F_i \to F_{i+1}$.
 
  Conversely, we now prove that every sequence $(F_1 \to F_2 \to \dots \to F_n=F)$ satisfying $\cell{C(F_1), F_1} < \cell{D,p}$ has an associated alternating path.
  We do this by induction on the length $n$ of the sequence.
  
  The case $n=1$ is trivial, since we already know that $\cell{C(F_1),F_1}<\cell{D,p}$.
  In the induction step, we need only to prove that $F \to G$ implies $\cell{C(G),G} < \cell{D(F),p(F)}$.
  From the first condition of Definition \ref{Def:valid-sequence}, we have that $G \prec \{p(F)\}$.
  We need to check that $D(F).G=C(G)$.
  By definition of $C(G)$, this is equivalent to proving that $D(F)$ and $C_0$ lie in opposite half-planes with respect to $|G|$.
  This is true because $F$ and $C_0$ lie in the same half-plane with respect to $|G|$.
\end{proof}

Now that we have a description of the alternating paths, we can use it to compute the boundary of the Morse complex.

\begin{definition}
  Given two different faces $F, \,G \in \L_1(\A)$, let
  \begin{equation*}
    [F \to G]= \frac{[\cell{D(F),p(F)}:\cell{C(G),G}]}{[\cell{D(F),p(F)}:\cell{C(F),F}]},                                      
  \end{equation*}
  where the incidence numbers on the right are taken in the Salvetti complex $\sal(\A)$, and $D(F)$ is defined as in the proof of Lemma \ref{Lemma:sequence}.
\end{definition}

\begin{lemma} \label{Lemma:boundary-line}
  Given two different faces $F,G \in \F_1(\A)$ such that $F \to G$, we have
  \begin{equation*}
  [F \to G]=\pm \prod t_\ell,
  \end{equation*}
  where the product is on the set of lines $\ell \neq |G|$ passing through $p(F)$, such that $G$ and $C_0$ lie in opposite half-planes, whereas $F$ and $C_0$ lie in the same closed half-plane (with respect to $\ell$). The sign is $+1$ if $p(F)=p(G)$, and $-1$ otherwise.
\end{lemma}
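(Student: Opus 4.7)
The plan is to unfold $[F \to G]$ using the identity $[\cell{D,H}:\cell{C,K}] = [\cell{D,H}:\cell{C,K}]_\Z \cdot \bar u(D^H, C^K)$ recalled above, and compute each piece separately. Set $p = p(F)$ and $E = C(F)^F$. Since $D(F)$ was defined as the chamber opposite to $E$ across $p$, one has $D(F)^p = E = C(F)^F$, so the two representative points coincide with $\cell{E,E}$ and $\bar u(D(F)^p, C(F)^F) = \bar u(E, E) = 1$. The orientation convention was designed so that matched pairs have integer incidence $+1$, giving $[\cell{D(F),p} : \cell{C(F),F}]_\Z = +1$, hence a denominator of $+1$. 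The whole problem thus reduces to computing $[\cell{D(F),p} : \cell{C(G), G}]$.

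For the monomial factor $\bar u(E, C(G)^G)$, the key observation is that both $E$ (having $F$ as a face) and $C(G)^G$ (having $G$ as a face, with $p \in G$ since $F \cap G = \{p\}$) have $p$ in their closures, so by convexity every hyperplane in $s(E, C(G)^G)$ must pass through $p$. Moreover, since $E$ and $C(F)$ agree on every line through $p$ other than $|F|$, the chamber $E$ lies on the same side as $F$ of any line $\ell \neq |F|$ through $p$; analogously $C(G)^G$ lies on the same side as $G$ of any line $\ell \neq |G|$ through $p$. Distinguishing the cases $\ell \neq |F|, |G|$; $\ell = |F| \neq |G|$; and $\ell = |G|$, and using that $\cell{C(F),F}, \cell{C(G),G} \notin N(C_0)$ forces $C(F)$ and $C(G)$ to lie on the sides of $|F|$ and $|G|$ respectively opposite to $C_0$, the set $s(E, C(G)^G) \cap s(C_0, C(G)^G)$ is identified as exactly the collection of lines described in the statement, producing the claimed product of $t_\ell$'s.

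The integer sign $[\cell{D(F), p} : \cell{C(G), G}]_\Z \in \{\pm 1\}$ is then read off the cyclic $2k$-gon boundary of the 2-cell $\cell{D(F), p}$ in $\sal(\A)$, where $k$ is the number of lines of $\A$ through $p$; its edges are the 1-cells $\cell{D(F).F', F'}$ for 1-faces $F'$ through $p$ (arranged in angular order, each oriented from $\cell{C, C}$ to $\cell{C^{F'}, C^{F'}}$), and its vertices are the 0-cells for the chambers around $p$. The orientation of the 2-cell is pinned down by the orientation convention (which also forces the matched incidence to equal $+1$). Splitting into the cases $|F| = |G|$ and $|F| \neq |G|$ and tracking whether the orientation of each boundary 1-cell agrees with the induced 2-cell orientation along the cycle, one finds that $\cell{C(G), G}$ lies in the ``same half'' of the polygon as the matched edge $\cell{C(F), F}$ when $p(F) = p(G)$ (so $\rho_{|G|}(x_0)$ sits on the non-$G$ side of $p$ along $|G|$), and in the antipodal half when $p(F) \neq p(G)$, yielding the signs $+1$ and $-1$ respectively. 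Multiplying by the monomial and dividing by the denominator $+1$ gives $[F \to G] = \pm \prod t_\ell$ as stated.

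The subtle part of the argument is this last sign analysis, which requires reconciling three ingredients: the orientation convention (which, when both walls of $D(F)$ at $p$ separate it from $C_0$, invokes the Euclidean distance to $x_0$); the cyclic combinatorics of the $2k$-gon boundary of $\cell{D(F), p}$; and the geometric meaning of the equality $p(F) = p(G)$ (a condition on where $\rho_{|G|}(x_0)$ sits along $|G|$). Verifying that these three ingredients cohere across the different configurations is where the main technical effort lies.
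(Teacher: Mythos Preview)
Your argument follows essentially the same route as the paper's: first show the denominator equals $1$ (because $D(F)^{p(F)}=C(F)^F$, so the representative points coincide and the orientation convention makes the matched integer incidence $+1$); then compute the monomial $\bar u(E(F),E(G))$ by identifying $s(C_0,E(G))\cap s(E(F),E(G))$ as exactly the set of lines in the statement; and finally determine the integer sign from the position of $G$ relative to the boundary polygon of $\cell{D(F),p(F)}$. The paper's sign argument is even more economical than your polygon analysis: it simply observes that $p(G)=p(F)$ holds precisely when $G$ lies in the half-plane of $|F|$ containing $D(F)$, and then appeals directly to the orientation convention.
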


\begin{proof}
  Denote by $E(F)$ and $E(G)$ the chambers $C(F)^F$ and $C(G)^G$, respectively.
  Notice that $E(F)=D(F)^{p(F)}$, and therefore $[\cell{D(F),p(F)}:\cell{C(F),F}]= 1$.
  See Figure \ref{fig:F-to-G} for an example.
  
  Now we need to determine $\bar{u}(E(F),E(G))$, which is the product of the positive loops around the lines in $s(C_0,E(G)) \cap s(E(F),E(G))$.
  By definition of $E(G)$, we have that $s(C_0,E(G))$ is the set of lines different from $|G|$ for which $G$ and $C_0$ in opposite half-planes.
  Since every line in $s(E(F),E(G))$ goes through $p(F)$, it is now easy to see that $s(C_0,E(G)) \cap s(E(F),E(G))$ is the set described in the statement.
  
  We now need to determine the sign. If $p(G)=p(F)$, then we immediately see that $G$ is in the half-plane delimited by $|F|$ that contains $D(F)$.
  The opposite is true if $p(G) \neq p(F)$.
  By our choice of the orientation, we obtain the stated result.
\end{proof}

\begin{figure}
  \begin{center}
  \begin{tikzpicture}[label distance=-3]
    \draw[name path=L1] (-5,3) -- (7,3);
    \draw[name path=L2] (-5,0) -- (7,0);
    \draw[name path=L3] (-4,-2) -- (7,3.5);
    \draw[name path=L4] (-2,-3) -- (3,4.5);
    \draw[name path=L5] (3,-3) -- (-4.2,4.2);
    
    \node[circle,inner sep=1.5pt, fill=black, label=left:{$x_0$}] (x0) at (-3,0.5) {};
    
    \node (C0) at (-4,1.5) {$C_0$};
    \node (CF) at (2.3,1.9) {$C(F)$};
    \node (EF) at (-0.4,1.9) {$E(F)$};
    \node (CG) at (-2,-1.8) {$C(G)$};
    \node (EG) at (-4,-0.8) {$E(G)$};
    
    \path[name intersections={of=L1 and L4, by=a}];
    
    \begin{scope}[every path/.style={draw=blue, very thick, fill=red}]
      \draw (a) -- (0,0);
      \draw (0,0) -- (-4,-2);
    \end{scope}
    
    \begin{scope}[every node/.style={text=blue}]
      \node (F) at (0.8,1.6) {$F$};
      \node (G) at (-2.7,-1.05) {$G$};
    \end{scope}

    \fill[white] (0.3,-0.31) circle(0.25);
    \node[circle,inner sep=1.5pt, fill=black, label=below right:{$\!p(F)$}] (p) at (0,0) {};

  \end{tikzpicture}
  \end{center}
  
  \caption{Faces $F,G \in \F_1(\A)$ such that $F \to G$, as in Lemma \ref{Lemma:boundary-line}.}
  \label{fig:F-to-G}
\end{figure}

\begin{theorem}\label{Thm:line-complex}
  Let $\A$ be a locally finite line arrangement in $\R^2$.
  Let $\cell{D,p}$ be a critical $2$-cell and $\cell{C,F}$ a critical $1$-cell.
  Then their incidence number in the Morse complex is given by
  \begin{equation*}
    [\cell{D,p}:\cell{C,F}]^\M =\sum_{s \in \Seq}{\omega(s)},
  \end{equation*}
  where $\Seq$ is the set of sequences of Lemma \ref{Lemma:sequence}, and for each sequence $s=(F_1 \to F_2 \to \dots \to F_n=F) \in \Seq$ we define
  \begin{equation*}
    \omega(s)=(-1)^n \, [\cell{D,p}:\cell{C(F_1),F_1}] \, \prod_{i=1}^{n-1}{[F_i \to F_{i+1}]}.
  \end{equation*}
\end{theorem}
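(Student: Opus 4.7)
The plan is to apply Theorem \ref{thm:incidence} directly, parameterizing the alternating paths between the critical cells $\cell{D,p}$ and $\cell{C,F}$ using the sequences provided by Lemma \ref{Lemma:sequence}, and then simplifying the resulting product of incidence numbers using the orientation convention for 2-cells.

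First I would unfold the formula of Theorem \ref{thm:incidence}. A generic alternating path from the critical $2$-cell $\cell{D,p}$ to the critical $1$-cell $\cell{C,F}$ has the shape displayed in \eqref{eq:AltPath}, with $n$ down-arrows and $n-1$ up-arrows; matching this against the pattern $\sigma_0 \searrow \tau_1 \nearrow \sigma_1 \searrow \cdots \nearrow \sigma_k \searrow \tau$ of Theorem \ref{thm:incidence} forces $k = n-1$, and identifies $\sigma_0 = \cell{D,p}$, $\tau_i = \cell{C_i,F_i}$, $\sigma_i = \cell{D_i,p_i}$, and $\tau = \cell{C,F}$. By the discussion preceding Lemma \ref{Lemma:sequence}, one has $C_i = C(F_i)$ and $\cell{D_i,p_i} = \cell{D(F_i), p(F_i)}$, so the path is entirely determined by the sequence $(F_1 \to \cdots \to F_n = F)$ given by Lemma \ref{Lemma:sequence}. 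Bijecting paths with sequences therefore reduces the sum in Theorem \ref{thm:incidence} to a sum over $\Seq$.

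Next I would simplify the contribution $m(\gamma)$ of a single path using the orientation choice made for $2$-cells not in $N(C_0)$. That convention was introduced precisely so that matched pairs have incidence $+1$: since the representative points $\cell{D(F_i)^{p(F_i)}, D(F_i)^{p(F_i)}}$ and $\cell{C(F_i)^{F_i}, C(F_i)^{F_i}}$ agree, the local system factor $\bar u(\cdot,\cdot)$ is trivial, and the integral incidence $[\cell{D(F_i),p(F_i)}:\cell{C(F_i),F_i}]_{\Z}$ equals $+1$ by the choice of orientation on $\cell{D(F_i),p(F_i)}$ (with $\ell = |F_i|$ being the wall selected by the recipe). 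Thus every factor $[\sigma_i : \tau_i]$ in the product of Theorem \ref{thm:incidence} collapses to $1$, leaving only the factors $[\sigma_{i-1}:\tau_i]$. For $i = 1$ this factor is $[\cell{D,p}:\cell{C(F_1),F_1}]$, and for $i \geq 2$ it equals $[\cell{D(F_{i-1}),p(F_{i-1})}:\cell{C(F_i),F_i}] = [F_{i-1}\to F_i]$ by definition of the latter symbol (remembering that its denominator is $1$). Similarly $[\sigma_k:\tau] = [F_{n-1}\to F_n]$. Multiplying these together with $(-1)^k$ and relabelling the product index recovers $\omega(s)$, and summing over $s \in \Seq$ gives the claimed formula.

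Two subtler points deserve attention and will likely be the main obstacle. The first is verifying that the bijection from Lemma \ref{Lemma:sequence} is exhaustive and that no alternating path strays outside the subposet picked out by those sequences; this requires a careful check that any $\sigma_i$ inserted in the middle of a path must indeed be the matched partner $\cell{D(F_i),p(F_i)}$ and not some other $2$-cell above $\tau_i$, and that the path never enters $N(C_0)$ (this is visible because $\cell{C,F}$ is critical, hence already outside $N(C_0)$, and none of the intermediate $\tau_i$ can be $N(C_0)$-cells either, since only matched $1$-cells appear as $\tau_i$). The second is the exact sign: one must track the exponent of $-1$ in $(-1)^k$ with $k = n-1$ carefully against the convention chosen in the statement of Theorem \ref{thm:incidence}, and absorb any remaining sign discrepancy into the convention for $[F\to G]$ from Lemma \ref{Lemma:boundary-line}. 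Once those bookkeeping items are handled, the rest is essentially a direct substitution.
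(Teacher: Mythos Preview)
Your proposal is correct and follows exactly the approach of the paper, which simply states that the result follows directly from \cite[Definition 11.23]{kozlov2007combinatorial} and Lemma \ref{Lemma:sequence}; you have merely unpacked this one-line argument in detail, including the verification that $[\cell{D(F_i),p(F_i)}:\cell{C(F_i),F_i}]=1$ (which the paper records in the proof of Lemma \ref{Lemma:boundary-line}). Your caution about the sign is warranted: with $k=n-1$ in Theorem \ref{thm:incidence} one obtains $(-1)^{n-1}$ rather than $(-1)^n$, and reconciling this with the stated $\omega(s)$ indeed requires comparing the sign conventions of \cite[Definition 11.23]{kozlov2007combinatorial} and Theorem \ref{thm:incidence}.
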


\begin{proof}
 It follows directly from \cite[Definition 11.23]{kozlov2007combinatorial} and Lemma \ref{Lemma:sequence}.
\end{proof}

\begin{remark}
 
Computing the incidence numbers is not the only way to obtain the  local system homology of a line arrangement. For example a different, more algebraic approach can be found in \cite{yoshinaga2014resonant}.
\end{remark}

\begin{example}[Deconing $A_3$]
  Consider the line arrangement $\A$ of Figure \ref{fig:dec-A3}, obtained by deconing the reflection arrangement of type $A_3$.
  Given a chamber $C_i$, denote by $\cell{C_i,F_i}$ the associated critical cell if it is of dimension $1$, or by $\cell{C_i,p_i}$ if it is of dimension $2$.
  Applying Theorem \ref{Thm:line-complex} and Lemma \ref{Lemma:boundary-line}, we obtain the boundary matrix $\partial_2$ of Table \ref{tab:decA3-2}.
  This matrix is slightly simpler than the one computed in \cite[Section 7]{gaiffi2009morse}, but there are many similarities.
  Specializing to the case $t_1=\ldots=t_5=t$, we obtain that
  \begin{equation*}
    H_1(M(\A); \, \Q[t^{\pm 1}]) \cong \left( \frac{\Q[t^{\pm 1}]}{t-1} \right)^3 \oplus \frac{\Q[t^{\pm 1}]}{t^3-1},
  \end{equation*}
  as already computed for example in \cite{gaiffi2009morse}.
\end{example}

\begin{figure}
  \begin{tikzpicture}[label distance=-3]
  
   \draw[name path=L1] (0,6.5) -- (6.5,0);
   \draw[name path=L2] (10,6.5) -- (3.5,0);
   \draw[name path=L3] (0,4) -- (10,4);
   \draw[name path=L4] (10,1.5) -- (3.5,8);
   \draw[name path=L5] (0,1.5) -- (6.5,8);
    
    \path[name intersections={of=L1 and L2, by=x6}];
    \path[name intersections={of=L3 and L2, by=x7}];
    \path[name intersections={of=L1 and L3, by=x8}];
    \path[name intersections={of=L2 and L4, by=x4}];
    
    \node[circle,inner sep=1.5pt, fill=black, label=left:{$x_0$}] (x0) at (6,3.6) {};

    \node (C0) at (5,2.9) {$C_0$};
    \node (C1) at (5,5.1) {$C_1$};
    \node (C2) at (7.5,2) {$C_2$};
    \node (C3) at (7.5,6) {$C_3$};
    \node (C4) at (9,4.6) {$C_4$};
    \node (C5) at (9,3.4) {$C_5$};
    \node (C6) at (2.5,2) {$C_6$};
    \node (C7) at (5,0.5) {$C_7$};
    \node (C8) at (2.5,6) {$C_8$};
    \node (C9) at (5,7.5) {$C_9$};
    \node (C10) at (1,4.6) {$C_{10}$};
    \node (C11) at (1,3.4) {$C_{11}$};
    
    \node (l1) at (0.5,6.5) {$l_1$};
    \node (l2) at (9.5,1.5) {$l_2$};
    \node (l3) at (9.7,3.7) {$l_3$};
    \node (l4) at (9.5,6.5) {$l_4$};
    \node (l5) at (0.5,1.5) {$l_5$};

  \end{tikzpicture}

  \caption{Deconing $A_3$.}
  \label{fig:dec-A3}
\end{figure}
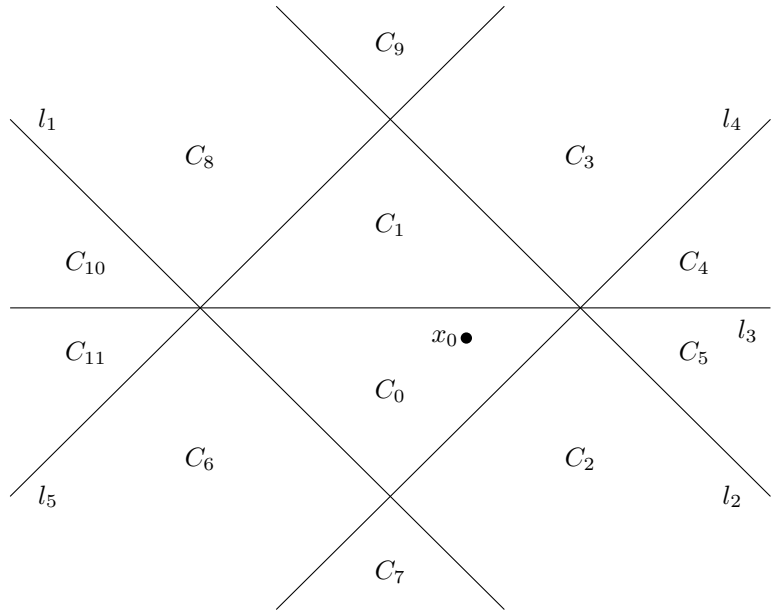

\begin{table}[htbp]
  \begin{tabular}{l|cccccc|}
    & $\cell{C_4,p_4}$ & $\cell{C_5,p_5}$ & $\cell{C_7,p_7}$ & $\cell{C_9,p_9}$ & $\cell{C_{10},p_{10}}$ & $\cell{C_{11},p_{11}}$ \\
  &&&&&&\\[-1em]
  \hline 
  &&&&&&\\[-1em]
  $\cell{C_1,F_1}$ & $1-t_4$ & $t_4(t_2-1)$ & $0$ & $0$ & $t_1-1$ & $t_1(1-t_5)$ \\
  
  $\cell{C_2,F_2}$ & $t_2t_3-1$ & $t_2-1$ & $1-t_1$ & $0$ & $0$ & $0$ \\
  
  $\cell{C_3,F_3}$ & $t_3(1-t_4)$ & $1-t_3t_4$ & $0$ & $t_5-1$ & $0$ & $0$ \\
  
  $\cell{C_6,F_6}$ & $0$ & $0$ & $t_4-1$ & $0$ & $1-t_3t_5$ & $1-t_5$\\
  
  $\cell{C_8,F_8}$ & $0$ & $0$ & $0$ & $1-t_2$ & $t_3(t_1-1)$ & $t_1t_3-1$ \\
  \hline
  \end{tabular}
  \bigskip
  
  \caption{The boundary $\partial_2$ of the deconing of $A_3$.}
  \label{tab:decA3-2}
\end{table}

\section*{Acknowledgements}

We would like to thank our supervisor, Mario Salvetti, for always giving valuable advice.
We also thank Emanuele Delucchi, Simona Settepanella, Federico Glaudo, and Daniele Semola, for the useful discussions.

\bibliographystyle{amsalpha-abbr}
\bibliography{bibliography}

\end{document}